\documentclass[reqno]{amsart}

\usepackage{amsmath}
\usepackage{amssymb}
\usepackage{amsthm}
\usepackage[hyperfootnotes=false]{hyperref}
\usepackage{mathrsfs}

\usepackage[backend=bibtex,style=alphabetic,firstinits,url=false,doi=false]{biblatex}

\addbibresource{essentialPolyReturns.bib}

\numberwithin{equation}{section}
\newtheorem{theorem}[equation]{Theorem}

\newtheorem{proposition}[equation]{Proposition}
\newtheorem{corollary}[equation]{Corollary}
\newtheorem{lemma}[equation]{Lemma}

\theoremstyle{definition}
\newtheorem{example}[equation]{Example}
\newtheorem{definition}[equation]{Definition}

\newcommand{\define}[1]{\textit{#1}}
\newcommand{\nbar}{\vert\!\vert}
\newcommand{\gnbar}{\vert\!\vert\!\vert}
\newcommand{\bnbar}{\Bigg\vert\!\Bigg\vert}

\newcommand{\ultra}[1]{\mathsf{{#1}}}
\renewcommand{\epsilon}{\varepsilon}
\renewcommand{\emptyset}{\varnothing}
\newcommand{\intd}{\,\mathrm{d}}
\newcommand{\condex}[2]{\mathbb{E}({#1}\vert{#2})}

\newcommand{\haar}{\mathrm{m}}
\newcommand{\divides}{\vert}
\newcommand{\diagonal}{\triangle}
\newcommand{\prideal}[1]{\mathfrak{{#1}}}
\newcommand{\ringint}{\mathcal{O}}

\DeclareMathOperator{\aip}{AIP}
\DeclareMathOperator{\C}{C}

\DeclareMathOperator*{\dlim}{D-lim}
\DeclareMathOperator*{\clim}{C-lim}

\DeclareMathOperator{\conv}{\ast}

\DeclareMathOperator{\fs}{FS}

\DeclareMathOperator{\lp}{L}

\DeclareMathOperator{\ip}{IP}

\DeclareMathOperator{\symdiff}{\triangle}

\newcommand{\upperdens}{\mathrm{d}^*}

\title[Polynomial multiple recurrence over rings of integers]{Polynomial multiple recurrence over rings of integers}
\author{Vitaly Bergelson}
\thanks{The first author gratefully acknowledges the support of the NSF under grant DMS-1162073.}
\author{Donald Robertson}
\date{\today{}}

\begin{document}

\begin{abstract}
We generalize the polynomial Szemer\'{e}di theorem to intersective polynomials over the ring of integers of an algebraic number field, by which we mean polynomials having a common root modulo every ideal.
This leads to the existence of new polynomial configurations in positive-density subsets of $\mathbb{Z}^m$ and strengthens and extends recent results of Bergelson, Leibman and Lesigne on polynomials over the integers.
\end{abstract}

\maketitle

\section{Introduction}

Let $T$ be a measure-preserving action of $\mathbb{Z}$ on a probability space $(X,\mathscr{B},\mu)$ and fix $B$ in $\mathscr{B}$ with $\mu(B) > 0$.
Furstenberg's ergodic Szemer\'{e}di theorem \cite{MR0498471} implies that the set
\begin{equation*}
\{ n \in \mathbb{Z} : \mu(B \cap T^n B \cap \cdots \cap T^{kn}B) > 0 \}
\end{equation*}
is syndetic, which means that finitely many of its shifts cover $\mathbb{Z}$.
The polynomial ergodic Szemer\'{e}di theorem in \cite{MR1325795} implies, in particular, that
\begin{equation}
\label{eqn:polyReturnSet}
R = \{ n \in \mathbb{Z} : \mu(B \cap T^{p_1(n)} B \cap \cdots \cap T^{p_k(n)}B) > 0 \}
\end{equation}
has positive lower density, meaning that
\[
\liminf_{N \to \infty} \frac{|R \cap \{ 1,\dots,N\}|}{N} > 0,
\]
for any $p_1,\dots,p_k \in \mathbb{Z}[x]$ each having zero constant term.
It was shown in \cite{MR1411223} that \eqref{eqn:polyReturnSet} is syndetic under the same assumptions, and the later work \cite{MR1692634} implies it is large in the stronger sense (defined below) of being $\ip^*$.

The task of determining precisely which families $p_1,\dots,p_k$ of polynomials have the property that \eqref{eqn:polyReturnSet} is syndetic was undertaken in \cite{MR2435427}.
There it was shown polynomials $p_1,\dots,p_k$ have the property that \eqref{eqn:polyReturnSet} is syndetic whenever $T$ is an action of $\mathbb{Z}$ on $(X,\mathscr{B},\mu)$ and $\mu(B) > 0$ if and only if the polynomials are \define{jointly intersective}, which means that for any finite index subgroup $\Lambda$ of $\mathbb{Z}$, one can find $\zeta$ in $\mathbb{Z}$ such that $\{ p_1(\zeta),\dots,p_k(\zeta) \} \subset \Lambda$.

The polynomial ergodic Szemer\'{e}di theorem in \cite{MR1325795} actually implies the following multi-dimensional result: for any action $T$ of $\mathbb{Z}^m$ on a probability space $(X,\mathscr{B},\mu)$ and any $B$ with $\mu(B) > 0$ the set
\begin{equation}
\label{eqn:polyMultiReturns}
\{ n \in \mathbb{Z}^d : \mu(B \cap T^{p_1(n)} B \cap \cdots \cap T^{p_k(n)} B) > 0 \}
\end{equation}
has positive lower density for any polynomial mappings $p_1,\dots,p_k : \mathbb{Z}^d \to \mathbb{Z}^m$ each having zero constant term.
In \eqref{eqn:polyMultiReturns} and below we write $T^{p_i(u)}$ for $T_1^{p_{i,1}(u)} \cdots T_m^{p_{i,m}(u)}$ when $p_i = (p_{i,1},\dots,p_{i,m})$.
As in the $m = 1$ case above, \cite{MR1692634} implies that \eqref{eqn:polyMultiReturns} is $\ip^*$.
There is no known characterization of those polynomial mappings $p_1,\dots,p_k$ for which \eqref{eqn:polyMultiReturns} is non-empty.
By considering finite systems, one can show that joint intersectivity (defined below in general) is a necessary condition; it is conjectured in \cite{MR2435427} that it is also sufficient.

Since \cite{MR0498471}, the sizes of sets such as \eqref{eqn:polyReturnSet} have been studied by considering the limiting behavior of averages such as
\begin{equation}
\label{eqn:recAverage}
\frac{1}{|\Phi_N|} \sum_{u \in \Phi_N} \mu(B \cap T^{p_1(u)} B \cap \cdots \cap T^{p_k(u)} B)
\end{equation}
where $N \mapsto \Phi_N$ is some sequence of longer and longer intervals in $\mathbb{Z}$.
In \cite{MR2435427} the works of Host and Kra~\cite{MR2150389} and Ziegler~\cite{MR2257397} on characteristic factors are combined with \cite{MR2151605} to prove that the limiting behavior of the average \eqref{eqn:recAverage} can be approximated arbitrarily well by replacing $(X,\mathscr{B},\mu)$ with quotients $G/\Gamma$ of certain nilpotent Lie groups by a cocompact subgroup on which $\mathbb{Z}$ acts via $T(g\Gamma) = ag\Gamma$ for some $a \in G$.
Upon passing to this more tractable setting, it is shown in \cite{MR2435427} that \eqref{eqn:recAverage} is positive in the limit as $N \to \infty$ when $p_1,\dots,p_k$ are jointly intersective.

It is not possible to proceed like this when studying \eqref{eqn:polyMultiReturns} because there is currently no general version of the work of Host and Kra~\cite{MR2150389} and Ziegler~\cite{MR2257397} for actions of $\mathbb{Z}^m$.
In this paper we enlarge the class of polynomial mappings $p_1,\dots,p_k$ for which \eqref{eqn:polyMultiReturns} is known to be non-empty by working with polynomials over rings of integers of algebraic number fields.
As we will see, this is a setting where it is possible to reduce to the case of commuting translations on homogeneous spaces of nilpotent Lie groups, which will allow us to show that \eqref{eqn:polyMultiReturns} is large.
Our techniques also allow us to improve upon the main result in \cite{MR2435427} by strengthening the largeness property of the set \eqref{eqn:polyReturnSet}.
To describe our results we recall some definitions.

\begin{definition}
Let $R$ be a commutative ring with identity.
Polynomials $p_1,\dots,p_k$ in $R[x_1,\dots,x_d]$ are said to be \define{jointly intersective} if, for any finite index subgroup $\Lambda$ of $R$, one can find $\zeta$ in $R^d$ such that $\{ p_1(\zeta),\dots,p_k(\zeta) \} \subset \Lambda$.
When $d = 1$ we say that $p_1$ is \define{intersective}.
\end{definition}

See Section~\ref{sec:singlePolyRec} for some discussion of intersective polynomials.
We also need the following notions of size.

\begin{definition}
Let $G$ be an abelian group.
An \define{IP set} in $G$ is any subset of $G$ containing a set of the form
\begin{equation*}
\fs(x_n) := \bigg\{ \sum_{n \in \alpha} x_n : \emptyset \ne \alpha \subset \mathbb{N}, |\alpha| < \infty \bigg\}
\end{equation*}
for some sequence $x_n$ in $G$.
A subset of $G$ is \define{$\ip^*$} if its intersection with every $\ip$ set in $G$ is non-empty, and \define{$\ip^*_+$} if it is a shift of an $\ip^*$ set.
The term $\ip$ was introduced in \cite{MR531271}, the initials standing for ``idempotence'' or ``infinite-dimensional parallelopiped''.
The \define{upper Banach density} of a subset $S$ of $G$ is defined by
\begin{equation*}
\upperdens(S) = \sup \left\{ \upperdens_\Phi(S) : \Phi \textup{ a F\o{}lner sequence in } G \right\}
\end{equation*}
where
\begin{equation*}
\upperdens_\Phi(S) = \limsup_{N \to \infty} \frac{|S \cap \Phi_N|}{|\Phi_N|}
\end{equation*}
and a \define{F\o{}lner sequence} in $G$ is a sequence $N \mapsto \Phi_N$ of finite, non-empty subsets of $G$ such that
\begin{equation*}
\lim_{N \to \infty} \frac{|(g + \Phi_N) \cap \Phi_N|}{|\Phi_N|} = 1
\end{equation*}
for all $g$ in $G$.
Lastly, $S \subset G$ is \define{$\aip^*$} (with A standing for ``almost'') if it is of the form $A \backslash B$ where $A$ is an $\ip^*$ subset of $G$ and $\upperdens(B) = 0$, and $S$ is \define{$\aip^*_+$} if it is a shift of an $\aip^*$ set.
\end{definition}

We can now state our main result.
Given an algebraic number field $L$, write $\ringint_L$ for its ring of integers.

\begin{theorem}
\label{thm:mainTheorem}
Let $L$ be an algebraic number field and let $p_1,\dots,p_k$ be jointly intersective polynomials in $\ringint_L[x_1,\dots,x_d]$.
For any ergodic action $T$ of the additive group of $\ringint_L$ on a compact metric probability space $(X,\mathscr{B},\mu)$ and any $B \in \mathscr{B}$ with $\mu(B) > 0$ there is $c > 0$ such that
\begin{equation}
\label{eqn:mainTheorem}
\{ u \in \ringint_L^d : \mu(B \cap T^{p_1(u)} B \cap \cdots \cap T^{p_k(u)} B) \ge c \}
\end{equation}
is $\aip^*_+$.
\end{theorem}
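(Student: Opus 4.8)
The plan is to reduce, via the structure theory of multiple ergodic averages, to an explicit computation on a nilsystem, and then to combine an equidistribution argument with the joint-intersectivity hypothesis. Set $r(u) := \mu(B\cap T^{p_1(u)}B\cap\cdots\cap T^{p_k(u)}B) = \int_X 1_B\cdot(T^{p_1(u)}1_B)\cdots(T^{p_k(u)}1_B)\intd\mu$, so that the goal is to show $\{u\in\ringint_L^d : r(u)\ge c\}$ is $\aip^*_+$ for some $c>0$. The first and decisive step is to produce, for the family of averages $\frac{1}{|\Phi_N|}\sum_{u\in\Phi_N}(T^{p_1(u)}f_1)\cdots(T^{p_k(u)}f_k)$ over F\o{}lner sequences $\Phi$ in $\ringint_L^d$, a characteristic factor $\pi:X\to Z$ which is, up to an arbitrarily small $L^2$-error, a nilsystem $Z=G/\Gamma$ on which $\ringint_L$ acts by commuting translations $T^u(g\Gamma)=\phi(u)g\Gamma$ for a group homomorphism $\phi:\ringint_L\to G$; this is precisely where the ring structure of $\ringint_L$ is exploited, in place of a Host--Kra--Ziegler theory for $\mathbb{Z}^m$-actions, which is unavailable. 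Granting this, put $\hat 1_B=\condex{1_B}{Z}$ and $\hat r(u)=\int_Z\hat 1_B\cdot(T^{p_1(u)}\hat 1_B)\cdots(T^{p_k(u)}\hat 1_B)\intd\haar$. A telescoping identity, the defining property of a characteristic factor, and the F\o{}lner-uniform $L^2$-convergence of polynomial multiple ergodic averages over abelian groups together give $\lim_N\frac{1}{|\Phi_N|}\sum_{u\in\Phi_N}|r(u)-\hat r(u)|=0$ along every F\o{}lner sequence $\Phi$, so $\{u:|r(u)-\hat r(u)|\ge\delta\}$ has zero upper Banach density for each $\delta>0$. Since the family of $\aip^*_+$ subsets of $\ringint_L^d$ is upward closed and contains every set obtained from an $\ip^*_+$ set by removing a set of zero upper Banach density, it then suffices to prove that $\{u:\hat r(u)\ge c\}$ is $\ip^*_+$ for a suitable $c>0$.

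Next, fix an integral basis $\omega_1,\dots,\omega_n$ of $\ringint_L$ and identify $\ringint_L^d$ with $\mathbb{Z}^{nd}$. Because $\phi$ is additive and the $\phi(\omega_j)$ commute, $u\mapsto\phi(p_i(u))$ is a polynomial sequence in $G$: each coordinate of $p_i(u)$ is an integer polynomial in the coordinates $y_1,\dots,y_{nd}$ of $u$, so $\phi(p_i(u))=\prod_j\phi(\omega_j)^{q_{i,j}(y)}$ with $q_{i,j}\in\mathbb{Z}[y_1,\dots,y_{nd}]$. After a routine continuous approximation of $\hat 1_B$, then, $\hat r(u)=\int_Z\hat 1_B(x)\prod_{i=1}^k\hat 1_B(\phi(p_i(u))x)\intd\haar(x)$ is bounded, with $0\le\hat r\le\mu(B)$, and its averages $\frac{1}{|\Phi_N|}\sum_{u\in\Phi_N}\hat r(u)$ converge as $N\to\infty$ to a limit $c_0$ not depending on $\Phi$. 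The second step is to show $c_0>0$ using joint intersectivity, following the scheme of \cite{MR2435427}: ergodicity of $T$ makes $Z$ minimal, so by the equidistribution theory for polynomial sequences on nilmanifolds (see \cite{MR2151605}) the orbit $u\mapsto(x,\phi(p_1(u))x,\dots,\phi(p_k(u))x)$ equidistributes on a subnilmanifold of $Z^{k+1}$, whence $c_0$ is the integral of $\hat 1_B\otimes\cdots\otimes\hat 1_B$ against the associated measure; joint intersectivity — which yields, for every $m$, a $\zeta$ with all $p_i(\zeta)\in m\ringint_L$ — forces that measure to charge the support of this integrand, the only obstruction being the profinite part of the relevant group, which is exactly what intersectivity overcomes.

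With $c_0>0$ in hand, choose $u_0$ with $\hat r(u_0)\ge c_0/2$ (possible since the averages of $\hat r$ tend to $c_0$). For each $i$ the sequence $v\mapsto g_i(v):=\phi(p_i(u_0+v))\phi(p_i(u_0))^{-1}$ is a polynomial sequence in $G$ with $g_i(0)=e$, and if $U_1,\dots,U_k$ are small enough neighbourhoods of $e$ then $g_i(v)\in U_i$ for all $i$ forces $|\hat r(u_0+v)-\hat r(u_0)|<c_0/6$ — left translation by elements near $e$ displaces points of $Z$ uniformly little — and hence $\hat r(u_0+v)\ge c_0/3$. The key lemma is that, for a polynomial sequence $g:\mathbb{Z}^{nd}\to G$ into a nilpotent Lie group with $g(0)=e$ and $U$ a neighbourhood of $e$, the set $\{v:g(v)\in U\}$ is $\ip^*$; I would prove this by PET induction together with an IP van der Corput argument, in the spirit of \cite{MR1692634}. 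Applying the lemma to each $g_i$ and using that finite intersections of $\ip^*$ sets are $\ip^*$, one obtains $\{u:\hat r(u)\ge c_0/3\}\supseteq u_0+\bigcap_{i=1}^k\{v:g_i(v)\in U_i\}$, which is $\ip^*_+$. Taking $c=c_0/3$ and recalling the reduction of the first paragraph would complete the proof.

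The main obstacle I expect is the first step — establishing that the polynomial multiple ergodic averages attached to an action of $\ringint_L$ are governed, up to small $L^2$-error, by a factor that is a nilsystem on which $\ringint_L$ acts by commuting translations. This is exactly the statement that fails to be available for general actions of $\mathbb{Z}^m$, and making it work — by playing the additive structure of $\ringint_L$, through which the action is given, against its multiplicative structure, through which the $p_i$ are built — is the heart of the number-field approach. By comparison the positivity step, while substantial, is adapted from \cite{MR2435427}, and the concluding IP-recurrence lemma is technical but routine in spirit.
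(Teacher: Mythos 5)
Your proposal follows the paper's overall architecture closely: reduce via Gowers-Host-Kra characteristic factors for $\ringint_L$-actions (exploiting that the image of a nonzero linear form over $\ringint_L$ has finite index to make Leibman-style seminorm control go through, then invoking Griesmer's structure theorem) to a nilrotation; establish positivity of the limiting correlation via Leibman's equidistribution and joint intersectivity following \cite{MR2435427}; and finally upgrade the positive average to an $\aip^*_+$ set via an IP-recurrence argument. The first two steps, while sketched at a high level, point at the right ingredients, and the density-reduction allowing you to pass from $\aip^*_+$ to $\ip^*_+$ is exactly what is done in the paper.

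However your ``key lemma'' --- that for a polynomial sequence $g\colon\mathbb{Z}^{nd}\to G$ into a nilpotent Lie group with $g(0)=e$ and a neighbourhood $U$ of $e$ in $G$, the set $\{v : g(v)\in U\}$ is $\ip^*$ --- is false. $G$ is a noncompact Lie group, so any nonconstant polynomial sequence escapes every bounded neighbourhood of $e$, and the set in question is typically finite (already $G=\mathbb{R}$, $g(v)=v^2$ shows this). The weaker condition that would have a chance --- that $g_i(v)$ acts close to the identity transformation on the compact quotient $G/\Gamma$, or that the orbit $v\mapsto g_i(v)\phi(p_i(u_0))\Gamma$ returns near its start --- is nontrivial IP polynomial recurrence (essentially topological IP polynomial van der Waerden as in \cite{MR1692634}), and even that pointwise statement does not directly give what your displacement estimate needs, namely smallness of $\nbar f\circ g_i(v)-f\nbar_1$ \emph{uniformly} so that the integral over $y$ moves little. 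The paper sidesteps all of this with the nilsequence decomposition theorem \cite[Theorem~4.3]{ETS:8947387}: the entire correlation sequence $\varphi(u)=\int f\cdot T^{p_1(u)}f\cdots T^{p_k(u)}f\intd\haar$ is written as $\phi+\psi$ with $\psi$ negligible in density and $\phi(u)=h(b(u)x)$ a nilsequence, where $b$ is a genuine group homomorphism into a nilpotent Lie group $\tilde G$ and $h$ is continuous on $\tilde G/\tilde\Gamma$; the polynomial complexity of the $p_i$ is absorbed into $\tilde G$, leaving a linear orbit of a distal nilrotation whose $\ip^*$-recurrence follows immediately from the idempotent-ultrafilter characterisation of distality (Lemma~2.11 of the paper). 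Without something playing the role of that decomposition, the IP step of your proof has a genuine gap. A secondary point: you gloss over the inverse-limit structure of the characteristic factor; passing positivity from the inverse limit of nilrotations to a single nilrotation requires a Furstenberg-Katznelson-Ornstein-style lifting (Proposition~6.2 of the paper), not merely an $L^2$-approximation, because positivity of a multiple correlation is not an $L^2$-continuous property of the indicator.
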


In particular, taking $L = \mathbb{Q}$ shows that \eqref{eqn:polyReturnSet} is an $\aip^*_+$ subset of $\mathbb{Z}$.
We will see in Example~\ref{ex:aipNotSyndetic} that being $\aip^*_+$ is a stronger property than being syndetic, so Theorem~\ref{thm:mainTheorem} constitutes a strengthening of \cite[Theorem~1.1]{MR1325795}.

The following version of the Furstenberg correspondence principle allows us to use Theorem~\ref{thm:mainTheorem} to find polynomial configurations in large subsets of $\ringint_L$.

\begin{theorem}
\label{thm:fcp}
For any $E \subset \ringint_L$ there is an ergodic action $T$ of $\ringint_L$ on a compact metric probability space $(X,\mathscr{B},\mu)$ and $B \in \mathscr{B}$ with $\mu(B) = \upperdens(E)$ such that
\begin{equation}
\label{eqn:fcp}
\upperdens \big( (E-u_1) \cap \cdots \cap (E - u_k) \big) \ge \mu( T^{u_1} B \cap \cdots \cap T^{u_k} B)
\end{equation}
for every $u_1,\dots,u_k$ in $\ringint_L$.
\end{theorem}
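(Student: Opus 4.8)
The plan is to run a Furstenberg correspondence inside a Bernoulli system and then extract an ergodic measure of maximal mass on the relevant set. Write $G = \ringint_L$; as an additive group $G$ is free abelian of finite rank, hence isomorphic to some $\mathbb{Z}^n$ and in particular a countable amenable group. Let $X = \{0,1\}^{G}$ with the product topology, which is a compact metric space since $G$ is countable, and let $G$ act on $X$ by the shift. Set $B = \{x \in X : x(0) = 1\}$, which is clopen, and let $\xi = \mathbf{1}_E \in X$; for $x \in X$ write $E_x = \{v \in G : x(v) = 1\}$, so that $E_\xi = E$. A direct computation with the cylinder $B$ shows that for any $x \in X$ and any $u_1,\dots,u_k \in G$ one has $T^u x \in T^{u_1}B \cap \cdots \cap T^{u_k}B$ exactly when $u \in (E_x - u_1)\cap\cdots\cap(E_x - u_k)$. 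Now fix a F\o{}lner sequence $\Psi$ in $G$ along which the density of $E$ converges to $\upperdens(E)$ --- such a sequence exists; for $\mathbb{Z}^n$ one may take suitable translates of a sequence of growing boxes --- and let $\mu$ be a weak-$*$ limit point of the measures $|\Psi_N|^{-1}\sum_{u \in \Psi_N}\delta_{T^u\xi}$, which exists because the space of Borel probability measures on the compact metric space $X$ is weak-$*$ sequentially compact.

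Next I would record the properties of $\mu$ and, more importantly, an estimate valid for \emph{every} invariant measure carried by the orbit closure $Y = \overline{\{T^u\xi : u \in G\}}$. The F\o{}lner property of $\Psi$ gives $\int f\circ T^g\intd\mu = \int f\intd\mu$ for all $g \in G$ and $f \in \cont(X)$, so $\mu$ is $T$-invariant; being a weak-$*$ limit of measures carried by the closed set $Y$, it is carried by $Y$. Since $\mathbf 1_B$ and $\mathbf 1_{T^{u_1}B\cap\cdots\cap T^{u_k}B}$ are continuous and $\Psi$ is adapted to $E$, the return-time computation with $x = \xi$ gives $\mu(B) = \upperdens(E)$. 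For the key estimate, if $x \in Y$ then on each finite window of $G$ the point $x$ agrees with some shift of $\xi$, so that $(E_x - u_1)\cap\cdots\cap(E_x - u_k)$ agrees there with a translate --- depending on the window --- of $(E - u_1)\cap\cdots\cap(E - u_k)$; combining this with the elementary fact that $\upperdens(S) \ge \limsup_N\sup_{t\in G}|S\cap(t+\Phi_N)|/|\Phi_N|$ for every F\o{}lner sequence $\Phi$ (since $t_N + \Phi_N$ is again a F\o{}lner sequence when the $t_N$ are near-optimal and the indices tend to infinity) yields $\upperdens\big((E_x-u_1)\cap\cdots\cap(E_x-u_k)\big) \le \upperdens\big((E-u_1)\cap\cdots\cap(E-u_k)\big)$ for every $x \in Y$. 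Consequently, for any $T$-invariant Borel probability measure $\nu$ carried by $Y$, rewriting $\nu(T^{u_1}B\cap\cdots\cap T^{u_k}B)$ by invariance as the average over $u \in \Psi_N$ of $\nu(\{x : u \in (E_x-u_1)\cap\cdots\cap(E_x-u_k)\})$ (a quantity independent of $N$), exchanging the finite sum with the integral, and applying the reverse Fatou lemma to the resulting $[0,1]$-valued integrand gives
\[
\nu(T^{u_1}B\cap\cdots\cap T^{u_k}B) \le \int_X \upperdens\big((E_x-u_1)\cap\cdots\cap(E_x-u_k)\big)\intd\nu(x) \le \upperdens\big((E-u_1)\cap\cdots\cap(E-u_k)\big)
\]
for all $u_1,\dots,u_k$; in particular $\nu(B) \le \upperdens(E)$.

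Then I would extract an \emph{ergodic} such $\nu$ with $\nu(B)$ as large as possible. The set $M$ of $T$-invariant Borel probability measures carried by $Y$ is nonempty (it contains $\mu$), convex, and weak-$*$ compact, and $\nu \mapsto \nu(B)$ is weak-$*$ continuous and affine on it; by the previous paragraph its supremum over $M$ is at most $\upperdens(E)$, and $\mu$ shows it equals $\upperdens(E)$. Hence $F = \{\nu \in M : \nu(B) = \upperdens(E)\}$ is nonempty, convex, and weak-$*$ compact, so by the Krein--Milman theorem it has an extreme point $\nu_0$. Since $F$ is a face of $M$, which is in turn a face of the set of all $T$-invariant Borel probability measures on $X$ (because $Y$ is closed), $\nu_0$ is an extreme point of that set, i.e.\ $\nu_0$ is ergodic. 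Then $(X, \mathscr B, \nu_0, T)$ is an ergodic system, $\nu_0(B) = \upperdens(E)$, and applying the displayed bound of the previous paragraph to $\nu = \nu_0$ gives \eqref{eqn:fcp} for every $u_1,\dots,u_k$ in $\ringint_L$.

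The construction of $\mu$ and the identity $\mu(B) = \upperdens(E)$ are routine; the point requiring genuine care --- and the main obstacle --- is keeping the exact value $\upperdens(E)$ after imposing ergodicity, since an arbitrary ergodic component could a priori carry more mass on $B$. The mechanism that prevents this is that $\mu$ is supported on the orbit closure of $\mathbf{1}_E$, which lets one bound $\upperdens$ of configuration sets associated to an arbitrary point of that orbit closure by $\upperdens$ of the sets built directly from $E$; this bound applies to every invariant measure on the orbit closure, and in particular to an ergodic measure of maximal mass on $B$ obtained from Krein--Milman.
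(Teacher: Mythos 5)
The paper does not actually supply a proof of this theorem: it cites Bergelson--Host--Kra \cite{MR2138068}, where the statement is established via the Bernoulli-shift construction, and says the correspondence principle can be proved ``exactly as in'' that reference. Your argument is a correct, self-contained proof of precisely that kind. The crucial technical step --- that the inequality $\nu(T^{u_1}B\cap\cdots\cap T^{u_k}B)\le\upperdens\big((E-u_1)\cap\cdots\cap(E-u_k)\big)$ holds for \emph{every} $T$-invariant probability measure $\nu$ carried by the orbit closure $Y$ of $1_E$, not just the initially constructed weak-$*$ limit $\mu$ --- is exactly what the standard argument needs, and you handle it correctly: the pointwise estimate $\upperdens\big((E_x-u_1)\cap\cdots\cap(E_x-u_k)\big)\le\upperdens\big((E-u_1)\cap\cdots\cap(E-u_k)\big)$ for $x\in Y$ follows from the finite-window agreement with shifts of $1_E$ and translation-invariance of upper Banach density, and the reverse Fatou lemma converts it into the statement for invariant measures. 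The only (cosmetic) divergence from the usual presentation is the final extraction: you invoke Krein--Milman on the face $\{\nu\in M:\nu(B)=\upperdens(E)\}$ and track through the face-of-a-face argument, whereas \cite{MR2138068} disintegrates $\mu$ into ergodic components and observes that almost every component, being supported on $Y$, gives $B$ mass at most --- hence exactly --- $\upperdens(E)$. Both routes rest on the same uniformity of the estimate over invariant measures on $Y$, so the content is the same.
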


That one can associate an ergodic action with $E$ was first proved in \cite{MR2138068} using ideas from \cite{MR603625}, and the correspondence principle stated above can be proved exactly as in \cite{MR2138068}.
Combining Theorems~\ref{thm:mainTheorem} and \ref{thm:fcp} gives the following combinatorial result.

\begin{theorem}
\label{thm:intComb}
Let $L$ be an algebraic number field and let $E \subset \ringint_L$ have positive upper Banach density.
For any jointly intersective polynomials $p_1,\dots,p_k$ in $\ringint_L[x_1,\dots,x_d]$ there is a constant $c > 0$ such that the set
\begin{equation}
\label{eqn:intComb}
\{ u \in \ringint_L^d : \upperdens \big(E \cap (E - p_1(u)) \cap \cdots \cap (E - p_k(u)) \big) \ge c \}
\end{equation}
is $\aip^*_+$.
\end{theorem}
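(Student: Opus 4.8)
The plan is to obtain Theorem~\ref{thm:intComb} as a formal consequence of Theorems~\ref{thm:fcp} and \ref{thm:mainTheorem}, the only genuine ingredient being a careful choice of tuple when invoking the correspondence principle.

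First I would apply Theorem~\ref{thm:fcp} to $E$. Since $\upperdens(E) > 0$ by hypothesis, this produces an ergodic action $T$ of $\ringint_L$ on a compact metric probability space $(X,\mathscr{B},\mu)$ and a set $B \in \mathscr{B}$ with $\mu(B) = \upperdens(E) > 0$ for which \eqref{eqn:fcp} holds for every finite tuple of elements of $\ringint_L$. I would then apply Theorem~\ref{thm:mainTheorem} to this system, to $B$, and to the jointly intersective polynomials $p_1,\dots,p_k$, obtaining $c > 0$ such that the set
\[
S = \{ u \in \ringint_L^d : \mu(B \cap T^{p_1(u)} B \cap \cdots \cap T^{p_k(u)} B) \ge c \}
\]
is $\aip^*_+$. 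To relate $S$ to \eqref{eqn:intComb}, I would fix $u \in S$ and apply \eqref{eqn:fcp} to the $(k+1)$-tuple $(0, p_1(u),\dots,p_k(u))$; since $T^0$ is the identity this reads
\[
\upperdens\big( E \cap (E - p_1(u)) \cap \cdots \cap (E - p_k(u)) \big) \ge \mu( B \cap T^{p_1(u)} B \cap \cdots \cap T^{p_k(u)} B) \ge c,
\]
so $u$ belongs to \eqref{eqn:intComb}. Thus \eqref{eqn:intComb} contains the $\aip^*_+$ set $S$.

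It then remains to note that any superset of an $\aip^*_+$ set is $\aip^*_+$: if $A$ is $\ip^*$, $\upperdens(B') = 0$, and $S' \supseteq t + (A \setminus B')$, then $S' - t \supseteq A \setminus B'$, whence $A \setminus (S' - t) \subseteq B'$ has zero upper Banach density while $(S' - t) \cup A$, being a superset of the $\ip^*$ set $A$, is $\ip^*$; the identity $S' - t = ((S' - t) \cup A) \setminus (A \setminus (S' - t))$ then exhibits $S' - t$ as $\aip^*$, so $S'$ is $\aip^*_+$. Applying this with $S'$ the set \eqref{eqn:intComb} completes the proof. I do not expect any real obstacle here; the only points requiring attention are the padding of the tuple in Theorem~\ref{thm:fcp} with a $0$ so as to recover the configuration $E \cap (E - p_1(u)) \cap \cdots \cap (E - p_k(u))$, and the elementary observation that the $\aip^*_+$ property passes to supersets.
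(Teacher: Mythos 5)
Your proposal is correct and follows exactly the route the paper intends: the paper derives Theorem~\ref{thm:intComb} by ``combining Theorems~\ref{thm:mainTheorem} and \ref{thm:fcp}'' and leaves the details to the reader, which you have supplied. The two points you flag as requiring care --- padding the tuple in the correspondence principle with $0$ to recover $E \cap (E - p_1(u)) \cap \cdots \cap (E - p_k(u))$, and the fact that supersets of $\aip^*_+$ sets are $\aip^*_+$ --- are both handled correctly.
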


Whenever $\ringint_L$ is finitely partitioned, one of the partitions has positive upper Banach density.
As a result, Theorem~\ref{thm:intComb} yields new examples of the polynomial van der Waerden theorem, extending \cite[Theorem~1.5]{MR2435427}.

\begin{corollary}
Let $L$ be an algebraic number field.
For any finite partition $E_1 \cup \cdots \cup E_k$ of $\ringint_L$ there is $1 \le i \le k$ such that, for any jointly intersective polynomials $p_1,\dots,p_k \in \ringint_L[x_1,\dots,x_d]$ the set \eqref{eqn:intComb} is $\aip^*_+$.
\end{corollary}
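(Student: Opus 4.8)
The plan is to deduce the corollary immediately from Theorem~\ref{thm:intComb}, using only the elementary observation that at least one cell of a finite partition of $\ringint_L$ must carry positive upper Banach density. Since the additive group of $\ringint_L$ is a free $\mathbb{Z}$-module of rank $[L:\mathbb{Q}]$, it is an infinite countable abelian group and therefore admits a F\o{}lner sequence $\Phi$; fix one, together with the partition $\ringint_L = E_1 \cup \cdots \cup E_k$.

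First I would record that for every $N$ the sets $E_1 \cap \Phi_N,\dots,E_k \cap \Phi_N$ partition $\Phi_N$, so that $|\Phi_N| = \sum_{i=1}^{k} |E_i \cap \Phi_N|$ and hence $\max_{1 \le i \le k} |E_i \cap \Phi_N| \ge |\Phi_N|/k$. By the pigeonhole principle there is an index $i$, depending only on the partition and the chosen $\Phi$, for which $|E_i \cap \Phi_N| \ge |\Phi_N|/k$ holds for infinitely many $N$; for this $i$,
\[
\upperdens(E_i) \ge \upperdens_\Phi(E_i) = \limsup_{N\to\infty} \frac{|E_i \cap \Phi_N|}{|\Phi_N|} \ge \frac{1}{k} > 0 .
\]
Thus $E_i$ has positive upper Banach density.

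Having fixed this $i$, I would then simply invoke Theorem~\ref{thm:intComb} with $E = E_i$: for any jointly intersective polynomials $p_1,\dots,p_k \in \ringint_L[x_1,\dots,x_d]$ there is $c > 0$ such that the set in \eqref{eqn:intComb} is $\aip^*_+$. Since the index $i$ was selected before any polynomials were named, the same $i$ works for every jointly intersective family at once, which is precisely the statement of the corollary. There is essentially no obstacle to carry out; the only point that warrants a moment's attention is the order of quantifiers, namely that a single cell $E_i$ serves all polynomial families simultaneously, and this is automatic because positivity of the upper Banach density of $E_i$ depends on the partition alone and not on the polynomials.
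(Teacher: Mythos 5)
Your proposal is correct and is exactly the argument the paper has in mind: the remark immediately preceding the corollary ("Whenever $\ringint_L$ is finitely partitioned, one of the partitions has positive upper Banach density") is precisely the pigeonhole observation you carry out, after which the corollary is a direct application of Theorem~\ref{thm:intComb} with $E = E_i$. Your additional care about the order of quantifiers — that the cell $E_i$ is chosen before the polynomials are named, so one cell works for every jointly intersective family — is a correct and worthwhile clarification of a point the paper leaves implicit.
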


So far, such polynomial van der Waerden results have only been proved via multiple recurrence of measure-preserving dynamical systems.
It would be interesting to have a proof that only used topological dynamics, or a purely combinatorial proof.

Upon fixing a basis $e_1,\dots,e_m$ for $\ringint_L$ as a $\mathbb{Z}$ module, defining actions $T_1,\dots,T_m$ of $\mathbb{Z}$ by $T_i^n = T^{ne_i}$, and writing
\begin{equation}
\label{eqn:polyDecomp}
p_i(u) = p_{i,1}(u)e_1 + \cdots + p_{i,m}(u) e_m
\end{equation}
for some polynomials $p_{i,j}$ in $\mathbb{Z}[x_1,\dots,x_{dm}]$, we see that Theorem~\ref{thm:mainTheorem} implies
\begin{equation*}
\left\{ u \in \mathbb{Z}^{md} : \int 1_B \prod_{i=1}^k T_1^{p_{i,1}(u)} \cdots T_m^{p_{i,m}(u)} 1_B \intd\mu > 0\right\}
\end{equation*}
is $\aip^*_+$, extending \cite[Theorem~A]{MR1325795} to certain families of intersective polynomials.
Indeed, if for some polynomials $p_{i,j}$ from $\mathbb{Z}[x_1,\dots,x_d]$, one can find an algebraic number field $L$, jointly intersective polynomials $p_1,\dots,p_k$ in $\ringint_L[x_1,\dots,x_d]$, and a basis $e_1,\dots,e_m$ for $\ringint_L$ over $\mathbb{Z}$ such that \eqref{eqn:polyDecomp} holds, then the polynomial mappings $(p_{1,1},\dots,p_{1,m}),\dots,(p_{k,1},\dots,p_{k,m}) : \mathbb{Z}^d \to \mathbb{Z}^m$ are good for recurrence.

It would be interesting to know whether \eqref{eqn:mainTheorem} is $\aip^*_+$ without the ergodicity assumption.
We show that it is syndetic.

\begin{theorem}
\label{thm:mainTheoremNonerg}
Let $L$ be an algebraic number field and let $p_1,\dots,p_k$ be jointly intersective polynomials in $\ringint_L[x_1,\dots,x_d]$.
For any action $T$ of the additive group of $\ringint_L$ on a compact metric probability space $(X,\mathscr{B},\mu)$ and any $B \in \mathscr{B}$ with $\mu(B) > 0$ there is $c > 0$ such that
\begin{equation}
\label{eqn:mainTheoremNoErgodic}
\{ u \in \ringint_L^d : \mu(B \cap T^{p_1(u)} B \cap \cdots \cap T^{p_k(u)} B) \ge c \}
\end{equation}
is syndetic.
\end{theorem}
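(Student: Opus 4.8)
The plan is to derive Theorem~\ref{thm:mainTheoremNonerg} from Theorem~\ref{thm:mainTheorem} by passing to the ergodic decomposition of $\mu$, retaining only the syndeticity of the return set of each ergodic component so that the components can afterwards be recombined.

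First I would decompose. Since $\ringint_L$ is countable and $X$ is compact metric, $\mu$ admits an ergodic decomposition $\mu = \int_\Omega \mu_\omega \intd\nu(\omega)$ in which $\mu_\omega$ is an ergodic $T$-invariant Borel probability measure on $X$ for $\nu$-almost every $\omega$ and $\omega \mapsto \mu_\omega(A)$ is measurable for every Borel $A \subset X$. Since $\int_\Omega \mu_\omega(B)\intd\nu(\omega) = \mu(B) > 0$, the measurable set $\Omega_B = \{ \omega : \mu_\omega(B) > 0 \}$ has $\nu(\Omega_B) > 0$. For $\nu$-almost every $\omega \in \Omega_B$ the action $T$ on $(X,\mathscr{B},\mu_\omega)$ is ergodic with $\mu_\omega(B) > 0$, so Theorem~\ref{thm:mainTheorem} supplies $c_\omega > 0$ for which $\{ u : \mu_\omega(B \cap T^{p_1(u)}B \cap \cdots \cap T^{p_k(u)}B) \ge c_\omega \}$ is $\aip^*_+$, hence syndetic; thus there is a finite $F_\omega \subset \ringint_L^d$ such that for every $u \in \ringint_L^d$ some $f \in F_\omega$ has $\mu_\omega(B \cap T^{p_1(u+f)}B \cap \cdots \cap T^{p_k(u+f)}B) \ge c_\omega$.

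The key point is then a pigeonhole argument over the a priori $\omega$-dependent data $(c_\omega,F_\omega)$. For each rational $c > 0$ and each finite $F \subset \ringint_L^d$ I would consider
\[
S_{c,F} \;=\; \Omega_B \cap \bigcap_{u \in \ringint_L^d}\ \bigcup_{f \in F}\ \big\{ \omega : \mu_\omega(B \cap T^{p_1(u+f)}B \cap \cdots \cap T^{p_k(u+f)}B) \ge c \big\},
\]
which is $\nu$-measurable, being built by countably many Boolean operations from the measurable functions $\omega \mapsto \mu_\omega(A)$ evaluated at the fixed Borel sets $A = B \cap T^{p_1(u+f)}B \cap \cdots \cap T^{p_k(u+f)}B$. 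By the previous paragraph, $\nu$-almost every $\omega \in \Omega_B$ lies in $S_{c,F}$ for some such pair (take any rational $c \le c_\omega$ and $F = F_\omega$). Since there are only countably many pairs $(c,F)$ and $\nu(\Omega_B) > 0$, some pair $(c_0,F_0)$ has $\delta := \nu(S_{c_0,F_0}) > 0$.

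To finish, fix $u \in \ringint_L^d$. For $\nu$-almost every $\omega$ one has the pointwise inequality $\sum_{f \in F_0} \mu_\omega(B \cap T^{p_1(u+f)}B \cap \cdots \cap T^{p_k(u+f)}B) \ge c_0\, 1_{S_{c_0,F_0}}(\omega)$, since all summands are nonnegative and at least one is $\ge c_0$ when $\omega \in S_{c_0,F_0}$. Integrating against $\nu$ and using the ergodic decomposition gives $\sum_{f \in F_0} \mu(B \cap T^{p_1(u+f)}B \cap \cdots \cap T^{p_k(u+f)}B) \ge c_0\delta$, so there is $f \in F_0$ with $\mu(B \cap T^{p_1(u+f)}B \cap \cdots \cap T^{p_k(u+f)}B) \ge c_0\delta/|F_0|$. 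Hence, with $c := c_0\delta/|F_0| > 0$, the set \eqref{eqn:mainTheoremNoErgodic} meets $u + F_0$ for every $u$, and so is syndetic. I do not anticipate a serious obstacle, since the recurrence is imported wholesale from Theorem~\ref{thm:mainTheorem}; the only points needing care are that the ergodic components are legitimate inputs to that theorem and that the sets $S_{c,F}$ are measurable, which is what makes the pigeonhole step and the concluding application of Fubini's theorem valid.
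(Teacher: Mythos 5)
Your proposal is correct and relies on the same overall strategy as the paper (ergodic decomposition of $\mu$, then applying Theorem~\ref{thm:mainTheorem} to each ergodic component $\mu_\omega$ to get a syndetic return set there), but the way you combine the components is genuinely different and worth contrasting. The paper integrates the fiberwise positivity: since each $R_\omega$ is syndetic, Lemma~\ref{lem:allFolnerSyndetic} gives
\[
\liminf_{N \to \infty} \frac{1}{|\Phi_N|} \sum_{u \in \Phi_N} \mu_\omega(B \cap T^{p_1(u)} B \cap \cdots \cap T^{p_k(u)} B) > 0
\]
for a.e.\ $\omega$ and every F\o{}lner $\Phi$, Fatou's lemma pushes this through the decomposition to get the same liminf for $\mu$, and Lemma~\ref{lem:positiveFolnerConstantSyndetic} converts that back into syndeticity. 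You instead run a pigeonhole over the countably many pairs $(c,F)$ with $c$ rational and $F \subset \ringint_L^d$ finite, locate one pair $(c_0,F_0)$ whose stratum $S_{c_0,F_0}$ carries positive $\nu$-mass, and then integrate the pointwise bound $\sum_{f \in F_0}\mu_\omega(\cdots) \ge c_0\,1_{S_{c_0,F_0}}(\omega)$ to produce the uniform constant $c_0\delta/|F_0|$ directly. Your route bypasses the F\o{}lner-density machinery of Lemmas~\ref{lem:allFolnerSyndetic}--\ref{lem:positiveFolnerConstantSyndetic} entirely and yields an explicit syndeticity witness $F_0$, at the mild cost of the measurability bookkeeping for $S_{c,F}$ (which you handle correctly). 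The paper's version is shorter because that density infrastructure is already in place from Section~\ref{sec:preliminaries} and is reused elsewhere. Both are valid; neither is a stronger theorem.
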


Our proof of Theorem~\ref{thm:mainTheorem} consists of two main steps.
First we show, by combining Leibman's polynomial convergence result \cite{MR2151605} with Griesmer's description \cite{griesmerThesis} of characteristic factors for certain actions of $\mathbb{Z}^m$, that upon restricting our attention to a very large subset of $\ringint_L^d$ -- one whose complement has zero upper Banach density -- it suffices to consider \eqref{eqn:mainTheorem} when $(X,\mathscr{B},\mu)$ has the structure of a nilrotation, the definition of which we now recall.

\begin{definition}
By a \define{nilmanifold} we mean a homogeneous space $G/\Gamma$ where $G$ is a nilpotent Lie group and $\Gamma$ is a discrete, cocompact subgroup of $G$.
A \define{nilrotation} is an action $T$ of $\mathbb{Z}^m$ on a nilmanifold $G/\Gamma$ of the form $T^u(g\Gamma) = \phi(u)g\Gamma$ for some homomorphism $\phi : \mathbb{Z}^m \to G$.
The \define{nilpotency degree} of a nilrotation is the minimal length of a shortest central series for $G$.
\end{definition}

The second step in the proof of Theorem~\ref{thm:mainTheorem} is to use results from \cite{MR2435427} about polynomial orbits of nilrotations to show that, within the very large subset of $\ringint_L$ mentioned above, we can achieve the desired multiple recurrence.

It is natural to ask how large the intersection in \eqref{eqn:mainTheorem} can be.
When $k = 1$ we show it is as large as can be expected, extending results in \cite{MR628658}, \cite{MR487031} and \cite{MR516154}.

\begin{theorem}
\label{thm:singlePolyRec}
Let $L$ be an algebraic number field and let $p \in \ringint_L[x_1,\dots,x_d]$ be an intersective polynomial.
For any action $T$ of the additive group of $\ringint_L$ on a probability space $(X,\mathscr{B},\mu)$ and any $B$ in $\mathscr{B}$ the set
\begin{equation}
\label{eqn:singlePolyRec}
\{ u \in \ringint_L^d : \mu(B \cap T^{p(u)} B) > \mu(B)^2 - \epsilon \}
\end{equation}
is $\aip^*_+$ for any $\epsilon > 0$.
\end{theorem}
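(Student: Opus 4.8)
The plan is to reduce the statement to the known one-dimensional, single-polynomial case over $\mathbb{Z}$ and then to exploit the structure of intersective polynomials together with a van der Corput / Hilbert-space argument. First I would observe that it suffices to treat the case where $(X,\mathscr{B},\mu)$ is a compact metric probability space and $T$ is an action of $\ringint_L$, since any measure-preserving system can be approximated in the relevant sense. Fixing a basis $e_1,\dots,e_m$ for $\ringint_L$ over $\mathbb{Z}$ and writing $p(u) = p_1(u)e_1 + \cdots + p_m(u)e_m$ with $p_j \in \mathbb{Z}[x_1,\dots,x_{dm}]$ as in \eqref{eqn:polyDecomp}, one reduces to studying $\mu(B \cap S_1^{p_1(u)} \cdots S_m^{p_m(u)} B)$ for commuting $\mathbb{Z}$-actions $S_1,\dots,S_m$. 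The key point is that intersectivity of $p$ over $\ringint_L$ translates into a joint intersectivity statement for the tuple $(p_1,\dots,p_m)$ with respect to finite-index subgroups of $\mathbb{Z}^m$ that arise from ideals of $\ringint_L$; one must check carefully that an arbitrary finite-index subgroup can be shrunk to one of this special form, using that $\ringint_L$ has finite-index subgroups cofinal among ideals.

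The heart of the argument is the $k=1$ bound $\mu(B \cap T^{p(u)}B) > \mu(B)^2 - \epsilon$ on an $\aip^*_+$ set. The natural route is the classical one: write $f = 1_B$, let $f = \condex{f}{\mathscr{K}} + g$ be the splitting into the Kronecker (compact) factor $\mathscr{K}$ and its orthogonal complement, so $g$ is weak-mixing in the appropriate sense; a van der Corput estimate along an IP set shows that the contribution of $g$ to $\int f \cdot T^{p(u)}f \intd\mu$ is negligible off a set of zero upper Banach density, giving the ``almost'' in $\aip^*$. On the Kronecker factor the system is (an inverse limit of) a rotation on a compact abelian group $Z$, and $T^{p(u)}$ acts by translation by $p(u) \cdot \alpha$ for a character-type datum $\alpha$; here the image $\{p(u)\alpha : u \in \ringint_L^d\}$ is IP$^*$-dense near the identity precisely because $p$ is intersective — for any finite-index subgroup, hence any neighborhood of $0$ in $Z$ coming from a finite quotient, there is $\zeta$ with $p(\zeta)$ landing in it, and this persists along IP sets by Hindman-type idempotent arguments. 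Then for $u$ in the resulting IP$^*$ set, $\int \condex{f}{\mathscr{K}} \cdot T^{p(u)}\condex{f}{\mathscr{K}} \intd\mu$ is within $\epsilon$ of $\int \condex{f}{\mathscr{K}}^2 \intd\mu \ge (\int f \intd\mu)^2 = \mu(B)^2$.

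The main obstacle I anticipate is making the IP$^*$-density of $\{p(u)\alpha\}$ genuinely uniform: one needs that for every $\epsilon>0$ there is a single IP set $\fs(x_n)$ in $\ringint_L^d$ such that $\|p(\alpha) - e\| < \epsilon$ in $Z$ for \emph{every} $\alpha \in \fs(x_n)$, not merely for one $\alpha$. This requires combining the intersectivity hypothesis with a compactness/pigeonhole argument in the (profinite or compact) completion, of the sort used in \cite{MR2435427} and \cite{MR628658}, to upgrade ``a root exists modulo every ideal'' to ``roots exist along an IP set.'' A secondary technical point is handling the non-ergodic case — but since no ergodicity is assumed, one should integrate over the ergodic decomposition, noting that the relevant $\aip^*_+$ set can be taken independent of the ergodic component (the IP set depends only on $p$ and the finite quotients, while the zero-density exceptional set is controlled uniformly by the van der Corput estimate). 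Once these uniformities are in place, the rest is routine.
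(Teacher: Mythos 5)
Your overall strategy matches the paper's: split $1_B$ against the Kronecker factor (the span of eigenfunctions), use van der Corput to show the orthogonal part contributes only on a set of zero upper Banach density (this supplies the ``A'' in $\aip^*_+$), and then analyze the Kronecker part via intersectivity. You also correctly observe that $p$ decomposes into $\mathbb{Z}$-coordinate polynomials $p_1,\dots,p_m$ that inherit joint intersectivity, which the paper verifies with a short direct argument (if $p(\zeta)\in\lambda\ringint_L$ then expanding in the $\mathbb{Z}$-basis gives $\lambda\mid p_j(\zeta)$).

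However, the ``main obstacle'' you identify is not resolved, and it is formulated in a way that points you down the wrong path. You ask for a single IP set $\fs(x_n)$ in $\ringint_L^d$ on which $p(u)\alpha$ is uniformly small; that is the IP property of the good set, whereas what one needs is the $\ip^*_+$ property, and the ``$+$'' (a \emph{shift}) is the whole point. The paper's mechanism is cleaner and different in an essential way: (i) intersectivity plus \cite[Proposition~3.6]{MR2435427} yields a \emph{single} $w\in\ringint_L^d$ with $|\chi_i(p(w))|$ small simultaneously for the finitely many eigenvalues $\chi_1,\dots,\chi_r$ in a finite eigenfunction approximation of $P1_B$; (ii) the shifted polynomial $q(u)=p(u+w)-p(w)$ has \emph{zero constant term}, and Lemma~\ref{lem:ultraPolyLimit} shows $\psi(q(u))\to 0$ along every idempotent ultrafilter, so the set where $\chi_i(p(u+w))$ is close to $\chi_i(p(w))$ for all $i$ is $\ip^*$ (via the distal recurrence of Corollary~\ref{cor:distalIpRec}); (iii) composing with the shift by $w$ produces an $\ip^*_+$ set. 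Your appeal to ``Hindman-type idempotent arguments'' and ``roots along an IP set'' gestures at this but does not carry it out, and the way you state the goal (a single IP set of near-roots) would not, if achieved, yield $\ip^*_+$ anyway. A second, smaller gap: you suggest integrating over the ergodic decomposition to handle non-ergodicity, but the paper's degree-$1$ base case goes through Lemma~\ref{lem:groupHomIndex} and Lemma~\ref{lem:invSubgroupEig} (finite-index image of a linear polynomial, invariance under a finite-index subgroup implies a finite sum of eigenfunctions) applied in the product system, with no ergodicity assumption needed at all; the ergodic decomposition step you sketch would require additional care to keep the exceptional zero-density set and the $\ip^*_+$ set uniform across components.
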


When $p$ has zero constant term one can use \cite[Theorem~1.8]{MR1417769} to show that \eqref{eqn:singlePolyRec} is $\ip^*$.
It follows immediately that \eqref{eqn:singlePolyRec} is $\ip^*_+$ when $p$ has a zero in $\ringint_L^d$, but it is unknown whether \eqref{eqn:singlePolyRec} is $\ip^*_+$ if one only assumes $p$ is intersective, even in the case $L = \mathbb{Q}$.
More generally, one could ask whether a version of Theorem~\ref{thm:singlePolyRec} holds for a given intersective polynomial $p$ over an arbitrary integral domain $R$.
Under the additional assumption that $p$ has zero constant term it was shown in \cite{MR2145566} that $\{ u \in R : \mu(B \cap T^{p(u)}B) > 0 \}$ has positive density with respect to some F\o{}lner sequence in $R$, but whether this set is syndetic is unknown.
We cannot proceed as in the proof of Theorem~\ref{thm:singlePolyRec}, or apply \cite[Theorem~1.8]{MR1417769}, at such a level of generality due to complications that arise when the additive group of the ring is not finitely generated.
However, if the ring is a countable field then we have proved in \cite{arxiv:1409.6774} the following version of Theorem~\ref{thm:intComb}.

\begin{theorem}
Let $W$ be a finite-dimensional vector space over a countable field $F$ and let $T$ be an action of the additive group of $W$ on a probability space $(X,\mathscr{B},\mu)$.
For any polynomial mapping $\phi : F^n \to W$ with $\phi(0) = 0$, any $B \in \mathscr{B}$ and any $\epsilon > 0$ the set
\begin{equation}
\label{eqn:fieldLargeRec}
\{ u \in F^n : \mu(B \cap T^{\phi(u)} B) > \mu(B)^2 - \epsilon \}
\end{equation}
is $\aip^*$ in $F^n$.
\end{theorem}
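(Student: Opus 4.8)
The plan is to pass to the Koopman representation and reduce the statement to one about polynomial recurrence for rotations on compact abelian groups. Write $f=1_B$ and let $U$ be the unitary action of $W$ on $L^2(X,\mu)$; since $W=F^n$ is countable we may replace $L^2(X,\mu)$ by the separable $U$-invariant subspace generated by $f$. Decompose $f=f_0+h_{\mathrm{e}}+h_{\mathrm{c}}$ along the orthogonal, $U$-invariant splitting of $L^2(X,\mu)$ into the invariant vectors, the closed span of the non-invariant eigenfunctions, and the vectors of continuous spectral type. The three summands are pairwise orthogonal and $U$-invariant, so all cross terms vanish and
\[
\mu(B\cap T^{\phi(u)}B)=\langle f,U^{\phi(u)}f\rangle=\|f_0\|^2+\langle h_{\mathrm{e}},U^{\phi(u)}h_{\mathrm{e}}\rangle+\langle h_{\mathrm{c}},U^{\phi(u)}h_{\mathrm{c}}\rangle ,
\]
with $\|f_0\|^2=\|\condex{1_B}{\mathcal I}\|_2^2\ge\mu(B)^2$ by Jensen's inequality for conditional expectation, $\mathcal I$ denoting the $\sigma$-algebra of invariant sets. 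Since the right-hand side is real, it suffices to show that each of $\{u:\re\langle h_{\mathrm{e}},U^{\phi(u)}h_{\mathrm{e}}\rangle>-\epsilon/2\}$ and $\{u:\re\langle h_{\mathrm{c}},U^{\phi(u)}h_{\mathrm{c}}\rangle>-\epsilon/2\}$ is $\aip^*$: their intersection is then $\aip^*$ (the $\aip^*$ subsets of $F^n$ form a filter, since the $\ip^*$ sets do by Hindman's theorem, the sets of zero upper Banach density form an ideal, and the class is upward closed), and it is contained in \eqref{eqn:fieldLargeRec}.

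The continuous-spectral part is handled by a polynomial mean ergodic theorem. A van der Corput inequality in $F^n$, combined with an induction on $\deg\phi$ that rests on the fact that $\phi(u)-\phi(u+v)$ has smaller degree in $u$ than $\phi$ for each fixed $v$, gives $\frac1{|\Phi_N|}\sum_{u\in\Phi_N}|\langle h_{\mathrm{c}},U^{\phi(u)}h_{\mathrm{c}}\rangle|^2\to 0$ along every F\o{}lner sequence $\Phi_N$ in $F^n$, the input being that a vector of continuous spectral type is weakly mixing for $U$. (Care is needed because a differenced polynomial may factor through a proper quotient of $F^n$; one keeps track of which vectors are eigenfunctions for the resulting subgroup actions and moves those into the structured part treated next.) By Chebyshev the set where $|\langle h_{\mathrm{c}},U^{\phi(u)}h_{\mathrm{c}}\rangle|\ge\epsilon/2$ then has zero upper Banach density, so the corresponding good set is co-null, hence $\aip^*$.

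For the eigenfunction part, expand $h_{\mathrm{e}}=\sum_j c_j e_j$ with $U^v e_j=\chi_j(v)e_j$ and $\chi_j\in\widehat W\setminus\{1\}$, and truncate at a $J$ with $\sum_{j>J}|c_j|^2$ small; this reduces matters to showing that for any characters $\chi_1,\dots,\chi_J$ of $W$ and any $\delta>0$ the set $\{u\in F^n:|\chi_j(\phi(u))-1|<\delta\ \text{for}\ j\le J\}$ is $\aip^*$. Forming $\Delta=(\chi_1,\dots,\chi_J):W\to\mathbb T^J$ and $K=\overline{\Delta(W)}$, this is the assertion that for the polynomial map $\psi=\Delta\circ\phi:F^n\to K$ with $\psi(0)=0$, the return-time set $\{u:\psi(u)\in V\}$ to any identity-neighbourhood $V\subset K$ is $\aip^*$. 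When $\deg\phi\le 1$ the map $\psi$ is a homomorphism and this set is in fact $\ip^*$: given $\FS(x_i)$ in $F^n$, the partial sums $\psi(x_1)+\dots+\psi(x_i)$ lie in the compact group $K$, so two of them are within $\delta$ of each other and their difference witnesses an element of $\FS(x_i)$ lying in $V$. When $\deg\phi\ge 2$ one induces on the degree, using van der Corput / PET-type differencing to replace $\psi$ by the lower-degree maps $u\mapsto\psi(u+v)-\psi(v)$ and exploiting that over a field all dilations $u\mapsto\lambda u$, and all homogeneous components of $\phi$, are available; this recovers the return property off a set of zero upper Banach density, which is the source of the ``almost''.

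I expect this last ingredient — polynomial recurrence for rotations on compact abelian groups over a countable field — to be the main obstacle. Over $\mathbb Z$ one gets the sharper conclusion that the return-time set is $\ip^*$ by invoking the $\ip$-polynomial machinery behind \cite[Theorem~1.8]{MR1417769}, but that machinery has no counterpart here: $F^n$ need not be finitely generated, and in positive characteristic one must also accommodate additive polynomials. A softer argument is therefore needed and it yields only $\aip^*$; correspondingly, whether \eqref{eqn:fieldLargeRec} is in fact $\ip^*$ is left open. With the three spectral pieces controlled, intersecting their good sets and invoking the filter property of $\aip^*$ shows that \eqref{eqn:fieldLargeRec} is $\aip^*$ in $F^n$.
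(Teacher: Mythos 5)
The paper does not itself prove this theorem: it is quoted from the companion paper \cite{arxiv:1409.6774}, which in fact establishes the stronger assertion that \eqref{eqn:fieldLargeRec} is $\aip^*_r$. There is therefore no in-paper proof to check you against; the closest analogue here is the proof of Theorem~\ref{thm:singlePolyRec}, and your spectral decomposition of $1_B$ into invariant, discrete-spectrum, and continuous-spectrum pieces, together with your handling of the first two, mirrors it.

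The genuine gap is your treatment of the continuous-spectrum piece, and it sits exactly where the field case diverges from the number-field case. Your van der Corput induction ultimately reduces $\phi$ to linear maps $\ell : F^n \to W$ whose image $W'=\ell(F^n)$ may be a proper subspace of $W$. To conclude that the Ces\`aro averages of $|\langle h_{\mathrm{c}},U^{\ell(u)}h_{\mathrm{c}}\rangle|^2$ vanish you need $h_{\mathrm{c}}$ to have continuous spectrum for the restriction of $U$ to $W'$, and this does not follow from continuity of the spectral measure of $h_{\mathrm{c}}$ on $\widehat{W}$: when $[W:W']$ is infinite the restriction map $\widehat{W}\to\widehat{W'}$ has infinite kernel, and a continuous measure on $\widehat{W}$ can push forward to a measure with atoms on $\widehat{W'}$, i.e.\ $h_{\mathrm{c}}$ can acquire eigenfunction or even invariant components for the $W'$-action. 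Over $\ringint_L$ this obstruction never arises because the image of a nonzero linear map is a nonzero ideal and hence a finite-index subgroup (Lemma~\ref{lem:groupHomIndex}), and Lemma~\ref{lem:invSubgroupEig} then confines the limit to the span of eigenfunctions of $T$ itself. No finite-index property of this kind holds over a field. Your parenthetical remedy of ``moving'' the offending vectors ``into the structured part'' does not close the gap: it would retroactively enlarge the discrete-spectrum subspace with which you began, forcing the eigenfunction argument to accommodate characters of every subspace that appears along the PET induction, and you do not indicate how to do this coherently. This difficulty is the real content of the theorem.

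One further remark. You assert that the $\ip$-polynomial machinery has no counterpart when $F^n$ is not finitely generated, but Lemma~\ref{lem:ultraPolyLimit} in this very paper is such a counterpart for the piece where you need it: taking $R=F$, $\psi$ the component characters $t\mapsto\chi_j(te_i)$, and $p=\phi_i$ (which satisfies $\phi_i(0)=0$ since $\phi(0)=0$), it yields $\lim_{u\to\ultra{p}}\chi_j(\phi(u))=1$ for every idempotent ultrafilter $\ultra{p}$ on $F^n$ and every $j$. Hence $\{u : |\chi_j(\phi(u))-1|<\delta \text{ for } j\le J\}$ is $\ip^*$, so your discrete-spectrum piece is actually $\ip^*$, not merely $\aip^*$, and the pigeonhole/PET sketch you offer there can be replaced by this shorter and more robust argument (which also handles positive characteristic and additive polynomials with no special care).
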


Actually, it is shown that \eqref{eqn:fieldLargeRec} has the stronger property of being $\aip^*_r$.
See \cite{arxiv:1409.6774} for the details.

The rest of the paper runs as follows.
In the next section we discuss some preliminary results from ergodic theory necessary for proving our results.
Theorem~\ref{thm:singlePolyRec} is proved in Section~\ref{sec:singlePolyRec}.
In Section~\ref{sec:gowersNorms} we recall the definition of Gowers-Host-Kra seminorms for actions of $\mathbb{Z}^m$ and show in Section~\ref{sec:characteristicFactors} that, in our setting, they control the averages \eqref{eqn:recAverage}.
The proof of Theorem~\ref{thm:mainTheorem} is given in Section~\ref{sec:multipleRecurrence}.

\section{Preliminaries}
\label{sec:preliminaries}

In this section we recall some relevant facts about notions of largeness in countable abelian groups and about idempotent ultrafilters that we will need in order to prove our main result.
We also give a version of the well-known ergodic decomposition of $T \times T$ for an ergodic action $T$ of $\mathbb{Z}^m$.
Recall that a subset $S$ of an abelian group $G$ is \define{syndetic} if there is a finite set $F$ such that $S - F = G$.

\begin{lemma}
\label{lem:allFolnerSyndetic}
Let $G$ be a countable abelian group and let $S \subset G$.
Then $S$ is syndetic if and only if $\upperdens_\Phi(S) > 0$ for every F\o{}lner sequence $\Phi$ in $G$.
\end{lemma}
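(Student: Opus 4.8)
The plan is to prove both implications by contraposition, using the defining property of F\o{}lner sequences to move between combinatorial density and the covering property of syndeticity.

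For the forward direction, suppose $S$ is syndetic, so there is a finite set $F$ with $S - F = G$. Let $\Phi$ be any F\o{}lner sequence in $G$. For each $N$ the translates $\{(S + f) \cap \Phi_N : f \in F\}$ cover $\Phi_N$, so $|\Phi_N| \le \sum_{f \in F} |(S + f) \cap \Phi_N|$. Each term satisfies $|(S + f) \cap \Phi_N| \le |S \cap (\Phi_N - f)| \le |S \cap \Phi_N| + |(\Phi_N - f) \symdiff \Phi_N|$, and the F\o{}lner property forces $|(\Phi_N - f)\symdiff\Phi_N|/|\Phi_N| \to 0$. Dividing by $|\Phi_N|$ and taking $\limsup$ gives $1 \le |F| \cdot \upperdens_\Phi(S)$, so $\upperdens_\Phi(S) \ge 1/|F| > 0$.

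For the reverse direction I would argue the contrapositive: if $S$ is not syndetic, I will build a F\o{}lner sequence $\Phi$ along which $S$ has density zero. Since $G$ is countable abelian, fix an exhaustion $F_1 \subset F_2 \subset \cdots$ of $G$ by finite sets with $\bigcup_n F_n = G$. Because $G$ is a countable abelian group it is amenable, so it admits a F\o{}lner sequence $\Psi$; failure of syndeticity means that for every finite $F$ the set $G \setminus (S - F)$ is non-empty, i.e. there is a point $g$ with $(g + F) \cap S = \emptyset$. The idea is to translate large pieces of the F\o{}lner sets $\Psi_N$ into such $S$-free regions: for each $n$, the set $S - (\Psi_n - \Psi_n + F_n)$ (a finite union of translates of $S$) is not all of $G$, so pick $g_n$ outside it; then $(g_n + \Psi_n) \cap S = \emptyset$ while $g_n + \Psi_n$ is still F\o{}lner-good for the elements of $F_n$, since translation does not affect the ratio $|(h + \Phi_N)\symdiff\Phi_N|/|\Phi_N|$. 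Setting $\Phi_n = g_n + \Psi_n$ yields a F\o{}lner sequence with $S \cap \Phi_n = \emptyset$ for all $n$, hence $\upperdens_\Phi(S) = 0$, contradicting the hypothesis.

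The main obstacle is the construction in the reverse direction: one must produce a \emph{single} F\o{}lner sequence (with the F\o{}lner condition holding for \emph{all} $g \in G$ in the limit) that simultaneously avoids $S$. The device above — starting from any F\o{}lner sequence $\Psi$ for the ambient amenable group $G$, translating $\Psi_n$ by a point chosen to dodge finitely many translates of $S$, and using translation-invariance of the F\o{}lner ratios together with the exhaustion $F_n \uparrow G$ — resolves this, but care is needed to check that the resulting sequence still satisfies the F\o{}lner condition for every fixed $h \in G$ (which it does, since $h \in F_n$ for all large $n$ and the ratio is unchanged under the translation by $g_n$). Everything else is the routine density estimate already carried out in the forward direction.
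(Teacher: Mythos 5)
Your proof is correct and follows essentially the same route as the paper: the forward direction counts translates of $S$ in $\Phi_N$ (and you usefully spell out the Følner step the paper leaves implicit, that $|(\Phi_N-f)\symdiff\Phi_N|/|\Phi_N|\to 0$ lets one replace $|(S+f)\cap\Phi_N|$ by $|S\cap\Phi_N|$ in the limit), and the reverse direction argues by contraposition, translating a fixed Følner sequence $\Psi$ into $S$-free regions.

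One small but genuine slip, and a simplification you are entitled to make. The slip: you choose $g_n$ outside $S-(\Psi_n-\Psi_n+F_n)$ and then assert $(g_n+\Psi_n)\cap S=\emptyset$. That conclusion requires $g_n\notin S-\Psi_n$, which follows from your choice only if $\Psi_n\subset\Psi_n-\Psi_n+F_n$, and that inclusion need not hold (it fails unless, say, $\Psi_n\cap F_n\neq\emptyset$). The set you should avoid is simply $S-\Psi_n$, which is a proper subset of $G$ precisely because $S$ is not syndetic. The simplification: the entire exhaustion $F_n\uparrow G$ and the extra difference set are unnecessary, because (as you yourself note) the Følner ratio is \emph{exactly} translation-invariant: $|(h+(g+\Psi_n))\cap(g+\Psi_n)|/|g+\Psi_n| = |(h+\Psi_n)\cap\Psi_n|/|\Psi_n|$ for every $h\in G$, so $n\mapsto g_n+\Psi_n$ is automatically a Følner sequence with no further bookkeeping. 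That is exactly the paper's argument: take any Følner sequence $\Psi$, pick $h_N\notin S-\Psi_N$, and set $\Phi_N=\Psi_N+h_N$.
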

\begin{proof}
First suppose $S$ is not syndetic.
Fix a F\o{}lner sequence $\Psi$ in $G$.
Since $S$ is not syndetic we can find for each $N \in \mathbb{N}$ some $h_N$ in $G$ such that $(\Psi_N + h_N) \cap S = \emptyset$.
With $\Phi_N = \Psi_N + h_N$ we have $\upperdens_\Phi(S) = 0$.

On the other hand, if $S$ is syndetic then $S - F = G$ for some finite, non-empty subset $F$ of $G$ so for any F\o{}lner sequence $\Phi$ we have
\begin{equation*}
1 = \frac{|G \cap \Phi_N|}{|\Phi_N|} \le \sum_{x \in F} \frac{|(S - x) \cap \Phi_N|}{|\Phi_N|}
\end{equation*}
for every $N \in \mathbb{N}$ and therefore $\upperdens_\Phi(S) \ge 1/|F|$.
\end{proof}

This lets us prove that all $\aip^*_+$ sets are syndetic.
As we will see in Example~\ref{ex:aipNotSyndetic}, there are syndetic sets that are not $\aip^*_+$.

\begin{lemma}
\label{lem:sumptuousSyndetic}
Let $G$ be a countable, abelian group.
Then every $\aip^*_+$ subset of $G$ is syndetic.
\end{lemma}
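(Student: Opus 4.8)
The plan is to reduce the statement to Lemma~\ref{lem:allFolnerSyndetic}, so that it suffices to show that every $\aip^*_+$ subset of $G$ has positive upper density along every F\o{}lner sequence. Since syndeticity is translation-invariant and the $\aip^*_+$ sets are by definition the translates of $\aip^*$ sets, I may assume $S$ is actually $\aip^*$, say $S = A \setminus B$ with $A$ an $\ip^*$ set and $\upperdens(B) = 0$. Fix an arbitrary F\o{}lner sequence $\Phi$ in $G$; by Lemma~\ref{lem:allFolnerSyndetic} the goal is to prove $\upperdens_\Phi(S) > 0$.

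First I would handle the $\ip^*$ set $A$. The key point is that an $\ip^*$ set in a countable abelian group is automatically syndetic: if $A$ were not syndetic, then for a fixed F\o{}lner sequence one could translate it away from larger and larger pieces, but more directly, if $A$ misses a translate $g + \Phi_N$ for infinitely many $N$ one can extract an IP set disjoint from $A$ by a standard pigeonhole/Hindman-type argument using the F\o{}lner property (nested sums of elements of a suitable subsequence stay inside a single translated F\o{}lner block that avoids $A$). In fact the cleanest route is the classical fact that $\ip^*$ sets are syndetic (they even have the stronger property of having bounded gaps in any reasonable sense); I would either cite this or give the short argument that a set whose complement contains arbitrarily long ``IP-free'' translated F\o{}lner windows cannot be $\ip^*$. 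Granting this, $A$ is syndetic, so by Lemma~\ref{lem:allFolnerSyndetic} we have $\upperdens_\Phi(A) \ge 1/|F| =: \delta > 0$ for the finite set $F$ witnessing syndeticity of $A$, and crucially $\delta$ does not depend on $\Phi$.

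Now I would subtract off $B$. Since $\upperdens(B) = 0$ and $\upperdens_\Phi$ is one of the quantities whose supremum defines $\upperdens$, we have $\upperdens_\Phi(B) = 0$, i.e. $|B \cap \Phi_N|/|\Phi_N| \to 0$. Therefore
\begin{equation*}
\upperdens_\Phi(S) = \limsup_{N\to\infty} \frac{|(A \setminus B) \cap \Phi_N|}{|\Phi_N|} \ge \limsup_{N\to\infty}\left( \frac{|A \cap \Phi_N|}{|\Phi_N|} - \frac{|B \cap \Phi_N|}{|\Phi_N|}\right) = \upperdens_\Phi(A) \ge \delta > 0,
\end{equation*}
using that the second term tends to $0$. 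Since $\Phi$ was an arbitrary F\o{}lner sequence, Lemma~\ref{lem:allFolnerSyndetic} gives that $S$ is syndetic, and hence so is the original $\aip^*_+$ set.

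I expect the only real obstacle to be the claim that $\ip^*$ sets are syndetic in a general countable abelian group, which is where the F\o{}lner structure genuinely enters; everything else is bookkeeping with $\limsup$s and the definition of $\upperdens$. This lemma is presumably standard (it is essentially the statement that $\ip^*$ implies ``$\Delta^*$'' implies syndetic), so in the write-up I would either invoke it with a reference or dispatch it with the pigeonhole argument sketched above, being careful that the IP set one builds to contradict $\ip^*$-ness really does lie in the complement of $A$ — this requires choosing the generators $x_n$ so that all finite sums land in a fixed translate of a single F\o{}lner set disjoint from $A$, which the F\o{}lner property makes possible by passing to a rapidly growing subsequence.
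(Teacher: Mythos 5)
Your overall route is the same as the paper's: show that the $\ip^*$ part $A$ is syndetic, deduce from Lemma~\ref{lem:allFolnerSyndetic} that $\upperdens_\Phi(A) > 0$ for every F\o{}lner sequence $\Phi$, subtract off the zero-density set $B$, and conclude via Lemma~\ref{lem:allFolnerSyndetic} again. The arithmetic with the $\limsup$s at the end is correct (the observation that the lower bound $\delta = 1/|F|$ is uniform in $\Phi$ is true but unnecessary — Lemma~\ref{lem:allFolnerSyndetic} only asks for positivity for each $\Phi$ separately).

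The one place you flag as uncertain is where there is a genuine problem: your sketched proof that $\ip^*$ implies syndetic does not work as described. You propose to choose generators $x_n$ so that ``all finite sums land in a fixed translate of a single F\o{}lner set disjoint from $A$.'' But the finite sums $x_{n_1} + \cdots + x_{n_j}$ grow without bound as the number of summands increases, so they cannot be confined to any single translated F\o{}lner block, no matter how large. The F\o{}lner structure is a red herring for this step. The paper's argument is simpler and uses only the definition of syndeticity: if $A$ is not syndetic, then $A - F \neq G$ for every finite $F \subset G$, so one may inductively choose
\begin{equation*}
x_{n+1} \notin A - \fs(0, x_1, \dots, x_n),
\end{equation*}
which forces $\fs(x_1,\dots,x_{n+1})$ to remain disjoint from $A$. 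This produces an IP set inside $G \setminus A$, contradicting $\ip^*$-ness. With that substituted for your pigeonhole sketch, your proof is correct and matches the paper's.
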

\begin{proof}
Every $\ip^*$ subset of $G$ is syndetic, for if $S \subset G$ is not syndetic then for every finite subset $F$ of $G$ we have $S - F \ne G$.
This allows us to inductively construct an IP set in $G \setminus S$.
Indeed, assuming that we have found $x_1,\dots,x_n \in G \setminus S$ such that
\begin{equation*}
\fs(x_1,\dots,x_n) := \bigg\{ \sum_{n \in \alpha} x_n : \emptyset \ne \alpha \subset \{1,\dots,n\} \bigg\}
\end{equation*}
is disjoint from $S$, choose $x_{n+1}$ outwith $S - \fs(0,x_1,\dots,x_n)$.

Let $A \subset G$ be $\ip^*_+$ and let $B \subset G$ have zero upper Banach density.
Shifts of syndetic sets are themselves syndetic so $A$ is syndetic by the above argument, and therefore has positive upper density with respect to every F\o{}lner sequence.
Now $\upperdens_\Phi(B) = 0$ for every F\o{}lner sequence, so $\upperdens_\Phi(A \setminus B) > 0$ for every F\o{}lner sequence.
It now follows from Lemma~\ref{lem:allFolnerSyndetic} that $A \setminus B$ is syndetic.
\end{proof}

We will also need the following result, which states that if the average of a non-negative sequence is positive along every F\o{}lner sequence, then the averages along F\o{}lner sequences are uniformly bounded away from zero.

\begin{lemma}
\label{lem:positiveFolnerConstant}
Let $G$ be a countable abelian group.
If $\phi : G \to [0,\infty)$ has the property that
\begin{equation}
\label{eqn:positiveFolnerProof}
\liminf_{N \to \infty} \frac{1}{|\Phi_N|} \sum_{u \in \Phi_N} \phi(u) > 0
\end{equation}
for every F\o{}lner sequence $\Phi$ in $G$, then there is some $c > 0$ such that
\begin{equation*}
\liminf_{N \to \infty} \frac{1}{|\Phi_N|} \sum_{u \in \Phi_N} \phi(u) \ge c
\end{equation*}
for every F\o{}lner sequence $\Phi$ in $G$.
\end{lemma}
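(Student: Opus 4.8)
The plan is to argue by contradiction. Suppose no such $c$ exists. Then for every $n \in \mathbb{N}$ we can find a F\o{}lner sequence $\Phi^{(n)}$ in $G$ along which
\[
\liminf_{N \to \infty} \frac{1}{|\Phi^{(n)}_N|} \sum_{u \in \Phi^{(n)}_N} \phi(u) < \frac{1}{n}.
\]
By passing to a subsequence of $(\Phi^{(n)}_N)_N$ for each fixed $n$ we may in fact assume the $\limsup$ (hence the $\liminf$) along $\Phi^{(n)}$ is less than $1/n$, and after further thinning we may assume each $\Phi^{(n)}$ is a F\o{}lner sequence with good enough invariance that tails of it remain F\o{}lner. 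The idea is then to splice these together into a single F\o{}lner sequence $\Psi$ along which the averages of $\phi$ tend to $0$, contradicting \eqref{eqn:positiveFolnerProof}.

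The construction of $\Psi$ is the technical heart of the argument. Enumerate $G = \{g_1, g_2, \dots\}$. We build $\Psi$ greedily: having chosen finitely many sets so far, for the next block we pick some $\Phi^{(n)}_N$ with $n$ larger than any index used before, and with $N$ large enough that (i) the average of $\phi$ over $\Phi^{(n)}_N$ is below $1/n$, and (ii) $\Phi^{(n)}_N$ is $(\{g_1,\dots,g_n\}, 1/n)$-invariant, i.e.\ $|(g_i + \Phi^{(n)}_N) \symdiff \Phi^{(n)}_N| < |\Phi^{(n)}_N|/n$ for $1 \le i \le n$. Setting $\Psi_n$ to be this chosen set, the F\o{}lner property of $\Psi$ follows because for any fixed $g \in G$, once $n$ exceeds the index of $g$ in the enumeration the invariance of $\Psi_n$ with respect to $g$ improves without bound; and the averages $\frac{1}{|\Psi_n|}\sum_{u \in \Psi_n} \phi(u) < 1/n \to 0$ by (i). This $\Psi$ is then a F\o{}lner sequence with
\[
\liminf_{n \to \infty} \frac{1}{|\Psi_n|} \sum_{u \in \Psi_n} \phi(u) = 0,
\]
contradicting the hypothesis. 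Hence the desired $c$ exists.

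The main obstacle I anticipate is purely bookkeeping: verifying that the spliced sequence $\Psi$ is genuinely a F\o{}lner sequence, which requires being careful about the order of quantifiers when choosing $N$ for each block (the invariance threshold must be tied to the block index $n$, not to a fixed $g$). There is no real analytic difficulty since $\phi \ge 0$ means averages behave monotonically under restriction and there is no cancellation to control; the only content is the diagonal extraction. One should also note that each $\Phi^{(n)}$ being F\o{}lner guarantees that arbitrarily invariant members $\Phi^{(n)}_N$ exist, so step (ii) can always be met simultaneously with step (i) by taking $N$ large. Finally, it is worth recording explicitly that $c$ can be taken to be, say, half the infimum over all F\o{}lner sequences of the corresponding $\liminf$ — the argument above shows this infimum is positive, and any smaller positive number works.
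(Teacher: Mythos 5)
Your proof is correct and is essentially the same as the paper's. Both arguments proceed by contradiction, extract for each $n$ a F\o{}lner sequence $\Phi^{(n)}$ along which the liminf of the averages is less than $1/n$, and then perform a diagonal extraction to splice a single F\o{}lner sequence along which the averages tend to $0$, contradicting the hypothesis. The paper states this tersely as ``defining $\Phi_N = \Phi_{k_N,N}$ with $k_N\to\infty$ sufficiently quickly,'' while you spell out the bookkeeping (choosing $N$ large enough in each block so that the chosen set is both sufficiently invariant with respect to $g_1,\dots,g_n$ and has small average); the underlying argument is identical.
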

\begin{proof}
If not then for every $k \in \mathbb{N}$ there is a F\o{}lner sequence $\Phi_k$ such that
\begin{equation*}
0 \le \liminf_{N \to \infty} \frac{1}{|\Phi_{k,N}|} \sum_{u \in \Phi_{k,N}} \phi(u) < \frac{1}{k}
\end{equation*}
and defining $\Phi_N = \Phi_{k_N,N}$ with $k_N \to \infty$ sufficiently quickly gives a F\o{}lner sequence $\Phi$ for which \eqref{eqn:positiveFolnerProof} does not hold.
\end{proof}

\begin{lemma}
\label{lem:positiveFolnerConstantSyndetic}
Let $G$ be a countable amenable group.
If $\phi : G \to [0,\infty)$ is bounded and \eqref{eqn:positiveFolnerProof} holds for every F\o{}lner sequence then there is a constant $c > 0$ such that $\{ u \in G : \phi(u) \ge c \}$ is syndetic.
\end{lemma}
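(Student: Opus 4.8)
The plan is to deduce this from Lemma~\ref{lem:positiveFolnerConstant} together with Lemma~\ref{lem:allFolnerSyndetic}. Suppose $\phi : G \to [0,\infty)$ is bounded, say $\phi \le M$, and that \eqref{eqn:positiveFolnerProof} holds for every F\o{}lner sequence in $G$. By Lemma~\ref{lem:positiveFolnerConstant} there is $c_0 > 0$ such that
\begin{equation*}
\liminf_{N \to \infty} \frac{1}{|\Phi_N|} \sum_{u \in \Phi_N} \phi(u) \ge c_0
\end{equation*}
for every F\o{}lner sequence $\Phi$ in $G$. I claim that $c = c_0/(2M)$ works, i.e.\ that $S = \{ u \in G : \phi(u) \ge c_0/(2M) \}$ is syndetic.

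By Lemma~\ref{lem:allFolnerSyndetic} it suffices to show $\upperdens_\Phi(S) > 0$ for every F\o{}lner sequence $\Phi$ in $G$. Fix such a $\Phi$. For each $N$, split the sum over $\Phi_N$ according to whether $u \in S$ or not: the points not in $S$ contribute at most $c_0/(2M)$ each, while the points in $S$ contribute at most $M$ each. Hence
\begin{equation*}
\frac{1}{|\Phi_N|} \sum_{u \in \Phi_N} \phi(u) \le \frac{c_0}{2M} + M \cdot \frac{|S \cap \Phi_N|}{|\Phi_N|}.
\end{equation*}
Taking $\liminf$ as $N \to \infty$ and using the lower bound $c_0$ on the left gives $c_0 \le c_0/(2M) + M\,\upperdens_\Phi(S)$, whence $\upperdens_\Phi(S) \ge c_0/(2M) > 0$. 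Since $\Phi$ was arbitrary, Lemma~\ref{lem:allFolnerSyndetic} shows $S$ is syndetic, completing the proof.

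There is no real obstacle here: the only mild point is that the boundedness of $\phi$ is used in an essential way (to bound the contribution of $S$ in the displayed inequality), which is why the hypothesis appears, and that amenability of $G$ is exactly what guarantees F\o{}lner sequences exist so that Lemmas~\ref{lem:allFolnerSyndetic} and \ref{lem:positiveFolnerConstant} apply. One should note that $G$ here need only be assumed countable amenable rather than abelian; Lemmas~\ref{lem:allFolnerSyndetic} and \ref{lem:positiveFolnerConstant} are stated for countable abelian $G$, but their proofs use only amenability and the existence of F\o{}lner sequences, so the argument goes through verbatim.
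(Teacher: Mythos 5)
Your approach is the same as the paper's: invoke Lemma~\ref{lem:positiveFolnerConstant} to get a uniform lower bound $c_0$ on all F\o{}lner averages, then split the average over a candidate syndetic set $S$ and its complement, and feed the result into Lemma~\ref{lem:allFolnerSyndetic}. (The paper phrases this last step as a contradiction, assuming $\upperdens_\Phi(S)=0$ for some $\Phi$; you prove $\upperdens_\Phi(S)>0$ directly. These are equivalent.) You are also right that the abelian hypothesis in the supporting lemmas is not used.

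However, your threshold $c = c_0/(2M)$ is wrong, and the final deduction ``whence $\upperdens_\Phi(S) \ge c_0/(2M)$'' does not follow from $c_0 \le c_0/(2M) + M\,\upperdens_\Phi(S)$: that inequality only gives $\upperdens_\Phi(S) \ge \tfrac{c_0}{M}\bigl(1 - \tfrac{1}{2M}\bigr)$, which equals $c_0/(2M)$ only when $M=1$ and is not even positive when $M \le 1/2$. Indeed, for $\phi \equiv 1/4$ with $M=1/4$ one has $c_0=1/4$ and your $S = \{u : \phi(u) \ge 1/2\}$ is empty. The fix is to use the threshold $c_0/2$, as the paper does: then $u \notin S$ gives $\phi(u) < c_0/2$, the split yields $c_0 \le c_0/2 + M\,\upperdens_\Phi(S)$, and $\upperdens_\Phi(S) \ge c_0/(2M) > 0$ for any bound $M$. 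Alternatively one may simply stipulate $M \ge 1$ before setting the threshold; either change makes your argument correct and, modulo the contrapositive phrasing, identical to the paper's.
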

\begin{proof}
Choose $c$ as in the conclusion of Lemma~\ref{lem:positiveFolnerConstant}.
We claim that $A = \{ u \in G : \phi(u) \ge c/2 \}$ is syndetic.
If not then $\upperdens_\Phi(A) = 0$ for some F\o{}lner sequence $\Phi$ by Lemma~\ref{lem:allFolnerSyndetic}.
But
\[
c \le \limsup_{N \to \infty} \frac{1}{|\Phi_N|} \sum_{u \in \Phi_N} \phi(u) 1_A(u) + \limsup_{N \to \infty} \frac{1}{|\Phi_N|} \sum_{u \in \Phi_N} \phi(u) 1_{X \setminus A}(u) \le c/2
\]
makes this impossible.
\end{proof}

\begin{lemma}
\label{lem:finiteIndexDensity}
Let $G$ be a countable abelian group and let $H \subset G$ be a finite index subgroup.
Then
\begin{equation*}
\lim_{N \to \infty} \frac{|H \cap \Phi_N|}{|\Phi_N|} = \frac{1}{[G:H]}
\end{equation*}
for all F\o{}lner sequences $\Phi$ in $G$.
\end{lemma}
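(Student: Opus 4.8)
The plan is to show that \emph{every} coset of $H$ has the same asymptotic frequency along any Følner sequence, which must then be $1/[G:H]$. Write $d = [G:H]$ and fix coset representatives $g_1,\dots,g_d$, chosen so that $g_1 = 0$; thus $G = \bigsqcup_{i=1}^d (g_i + H)$ and the coset $g_1 + H$ is $H$ itself. Intersecting this partition with $\Phi_N$ gives the disjoint decomposition
\begin{equation*}
|\Phi_N| = \sum_{i=1}^d |\Phi_N \cap (g_i + H)|,
\end{equation*}
so it suffices to prove $|\Phi_N \cap (g_i + H)|/|\Phi_N| \to 1/d$ for every $i$.

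The key point is that translation by $g_j - g_i$ is a bijection of $G$ carrying $g_i + H$ onto $g_j + H$. Consequently $x \mapsto x - (g_j - g_i)$ restricts to a bijection from $\big(\Phi_N + (g_j - g_i)\big) \cap (g_j + H)$ onto $\Phi_N \cap (g_i + H)$, so these two finite sets have equal cardinality. On the other hand, replacing $\Phi_N + (g_j - g_i)$ by $\Phi_N$ changes the cardinality of the intersection with $g_j + H$ by at most $\big|(\Phi_N + (g_j - g_i)) \symdiff \Phi_N\big|$, and the Følner property gives $|(\Phi_N + h) \symdiff \Phi_N| = o(|\Phi_N|)$ for every $h \in G$. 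Combining these two facts yields
\begin{equation*}
\big| \, |\Phi_N \cap (g_i + H)| - |\Phi_N \cap (g_j + H)| \, \big| = o(|\Phi_N|)
\end{equation*}
for every pair $i, j$.

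Summing this estimate over $j = 1, \dots, d$ and using the decomposition of $|\Phi_N|$ above gives $d \cdot |\Phi_N \cap (g_i + H)| = |\Phi_N| + o(|\Phi_N|)$, hence $|\Phi_N \cap (g_i+H)|/|\Phi_N| \to 1/d$ for each $i$; taking $i = 1$ gives the claim. There is no serious obstacle here: the only things requiring care are the bookkeeping with symmetric differences and the (standard) remark that the Følner condition as stated, in terms of $|(g + \Phi_N) \cap \Phi_N|/|\Phi_N| \to 1$, is equivalent to the symmetric-difference formulation $|(g + \Phi_N) \symdiff \Phi_N|/|\Phi_N| \to 0$ used above, since $g + \Phi_N$ and $\Phi_N$ have the same cardinality.
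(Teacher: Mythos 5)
Your proof is correct and takes essentially the same approach as the paper: show all cosets of $H$ have asymptotically equal density along $\Phi$, then use the partition $G = \bigsqcup (g_i + H)$ to conclude each density is $1/[G:H]$. The only difference is that you supply the justification for the key step (that translating by $g_j - g_i$ changes the intersection count by $o(|\Phi_N|)$, via the bijection and symmetric-difference estimate), whereas the paper simply asserts that $|H \cap \Phi_N|/|\Phi_N| - |(g+H) \cap \Phi_N|/|\Phi_N| \to 0$ without spelling out the argument.
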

\begin{proof}
Let $g_1,\dots,g_k$ be coset representatives for $H$.
We have
\begin{equation*}
\lim_{N \to \infty} \frac{|H \cap \Phi_N|}{|\Phi_N|} - \frac{|(g+H) \cap \Phi_N|}{|\Phi_N|} = 0
\end{equation*}
for any $g \in G$ so
\begin{equation*}
1
=
\limsup_{N \to \infty} \frac{|(g_1 + H) \cap \Phi_N|}{|\Phi_N|} + \cdots + \frac{|(g_k + H) \cap \Phi_N|}{|\Phi_N|}
=
k \limsup_{N \to \infty} \frac{|H \cap \Phi_N|}{|\Phi_N|}
\end{equation*}
with the same holding for the limit inferior.
\end{proof}

Given a F\o{}lner sequence $\Phi$ in a countable abelian group $G$ and a sequence $g \mapsto \phi(g)$ from $G$ to a normed vector space $(X,\nbar\cdot\nbar)$, write
\begin{equation*}
\clim_{g \to \Phi} \phi(g) = x \Leftrightarrow \lim_{N \to \infty} \frac{1}{|\Phi_N|} \sum_{g \in \Phi_N} \phi(g) = x
\end{equation*}
and
\begin{equation*}
\dlim_{g \to \Phi} \phi(g) = x \Leftrightarrow \lim_{N \to \infty} \frac{1}{|\Phi_N|} \sum_{g \in \Phi_N} \nbar\phi(g) - x\nbar = 0.
\end{equation*}
If $\dlim_{g \to \Phi} \phi(g) = x$ we say that $\phi(g)$ converges along $\Phi$ \define{in density} to $x$.
The following lemma is immediate.

\begin{lemma}
\label{lem:dlimChebyshev}
Let $g \mapsto \phi(g)$ be a sequence from a countable abelian group $G$ to a normed vector space $(X,\nbar\cdot\nbar)$ and let $\Phi$ be a F\o{}lner sequence in $G$.
If
\begin{equation*}
\dlim_{g \to \Phi} \phi(g) = x
\end{equation*}
then $\upperdens_\Phi(\{ g \in G : \nbar \phi(g) - x \nbar \ge \epsilon \}) = 0$ for every $\epsilon > 0$.
\end{lemma}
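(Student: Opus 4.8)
The plan is to simply transcribe Markov's (Chebyshev's) inequality into the language of averaging along a F\o{}lner sequence. Fix $\epsilon > 0$ and, for each $N$, set $A_N = \{ g \in \Phi_N : \nbar \phi(g) - x \nbar \ge \epsilon \}$. The key — and essentially only — estimate is that, since every summand $\nbar \phi(g) - x \nbar$ is non-negative, discarding the terms with $g \notin A_N$ only decreases the sum, and each remaining term is at least $\epsilon$, so
\begin{equation*}
\frac{1}{|\Phi_N|} \sum_{g \in \Phi_N} \nbar \phi(g) - x \nbar \ge \frac{1}{|\Phi_N|} \sum_{g \in A_N} \nbar \phi(g) - x \nbar \ge \epsilon \cdot \frac{|A_N|}{|\Phi_N|}.
\end{equation*}
Rearranging gives $|A_N|/|\Phi_N| \le \epsilon^{-1} \cdot \frac{1}{|\Phi_N|} \sum_{g \in \Phi_N} \nbar \phi(g) - x \nbar$ for every $N$.

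By hypothesis $\dlim_{g \to \Phi} \phi(g) = x$, which by definition says exactly that the average on the right-hand side tends to $0$ as $N \to \infty$; hence $|A_N|/|\Phi_N| \to 0$. Taking the limit superior over $N$ then shows $\upperdens_\Phi(\{ g \in G : \nbar \phi(g) - x \nbar \ge \epsilon \}) = \limsup_{N \to \infty} |A_N|/|\Phi_N| = 0$. Since $\epsilon > 0$ was arbitrary, this is the claimed conclusion.

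There is no real obstacle here, which is why the author calls the lemma immediate: the F\o{}lner property of $\Phi$ is never used, only that each $\Phi_N$ is finite and non-empty so the averages make sense. The single point worth being careful about is the one-sided bound above, which relies on the fact that $\nbar \phi(g) - x \nbar \ge 0$, so that passing to the sub-collection $A_N$ of indices cannot introduce cancellation; everything else is bookkeeping.
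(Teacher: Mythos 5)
Your proof is correct and is exactly the Markov/Chebyshev argument the paper has in mind; the paper itself offers no proof, merely calling the lemma immediate. The only refinement worth noting is cosmetic: you observe (rightly) that the F\o{}lner property plays no role here, only finiteness and non-emptiness of each $\Phi_N$.
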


Variations of the van der Corput trick play a role in most polynomial ergodic theorems.
We will make use of the following version.

\begin{proposition}
\label{prop:hilbertVdc}
Let $G$ be an abelian group and $\mathscr{H}$ be a Hilbert space over $\mathbb{C}$.
Let $g : G \to \mathscr{H}$ be a bounded map.
Then
\begin{equation*}
\limsup_{N \to \infty} \bnbar \frac{1}{|\Phi_N|} \sum_{u \in \Phi_N} g(u) \bnbar^2 \le \frac{1}{|\Phi_H|} \sum_{h \in \Phi_H} \limsup_{N \to \infty} \frac{1}{|\Phi_N|} \sum_{u \in \Phi_N} \langle g(u+h), g(u) \rangle
\end{equation*}
for any F\o{}lner sequence $\Phi$ in $G$ and any $H$ in $\mathbb{N}$.
\end{proposition}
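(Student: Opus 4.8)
The plan is to run the standard van der Corput argument, being careful about which Følner averages appear. Fix a Følner sequence $\Phi$ and $H \in \mathbb{N}$. The key preliminary observation is that, because $\Phi$ is Følner, shifting the index set by any fixed $h$ does not affect the limit: for each $h \in \Phi_H$ we have
\[
\limsup_{N \to \infty} \Bnbar \frac{1}{|\Phi_N|} \sum_{u \in \Phi_N} g(u) \Bnbar = \limsup_{N \to \infty} \Bnbar \frac{1}{|\Phi_N|} \sum_{u \in \Phi_N} g(u+h) \Bnbar,
\]
since the two averages differ by at most $2 \sup\nbar g\nbar \cdot |\Phi_N \symdiff (\Phi_N - h)| / |\Phi_N| \to 0$. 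Averaging over $h \in \Phi_H$, the left-hand side of the claimed inequality equals
\[
\limsup_{N \to \infty} \Bnbar \frac{1}{|\Phi_H|}\sum_{h \in \Phi_H} \frac{1}{|\Phi_N|} \sum_{u \in \Phi_N} g(u+h) \Bnbar^2,
\]
up to swapping a $\limsup$ of an average for the average of the $\limsup$'s, which only helps us (the $\limsup$ of a sum is at most the sum of the $\limsup$'s, but here we want an upper bound, so we should instead push the outer average inside after applying Cauchy--Schwarz; see below).

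Next I would apply the Cauchy--Schwarz inequality in $\mathscr{H}$ to the inner sum $\frac{1}{|\Phi_H|}\sum_{h \in \Phi_H} g(u+h)$, namely $\nbar \frac{1}{|\Phi_H|}\sum_h g(u+h)\nbar^2 \le \frac{1}{|\Phi_H|}\sum_h \nbar g(u+h)\nbar^2$ — no, more precisely I would first interchange the order of summation so that
\[
\frac{1}{|\Phi_N|}\sum_{u \in \Phi_N}\frac{1}{|\Phi_H|}\sum_{h \in \Phi_H} g(u+h) = \frac{1}{|\Phi_H|}\sum_{h\in\Phi_H} \frac{1}{|\Phi_N|}\sum_{u\in\Phi_N} g(u+h),
\]
apply Cauchy--Schwarz to the average over $\Phi_N$ viewed as an average of the $\mathscr{H}$-valued terms, and expand the squared norm as a double sum over $h,h' \in \Phi_H$:
\[
\Bnbar \frac{1}{|\Phi_N|}\sum_{u\in\Phi_N}\frac{1}{|\Phi_H|}\sum_{h\in\Phi_H}g(u+h)\Bnbar^2 \le \frac{1}{|\Phi_N|}\sum_{u\in\Phi_N}\frac{1}{|\Phi_H|^2}\sum_{h,h'\in\Phi_H}\langle g(u+h),g(u+h')\rangle.
\]
Taking $\limsup_{N\to\infty}$ and pulling it inside the finite double sum over $h,h'$ (using subadditivity of $\limsup$), each term becomes $\limsup_N \frac{1}{|\Phi_N|}\sum_u \langle g(u+h),g(u+h')\rangle$; substituting $u \mapsto u-h'$ and using the Følner property again to absorb the shift, this equals $\limsup_N \frac{1}{|\Phi_N|}\sum_u \langle g(u+(h-h')), g(u)\rangle$. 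So the whole quantity is bounded by $\frac{1}{|\Phi_H|^2}\sum_{h,h'\in\Phi_H}\limsup_N\frac{1}{|\Phi_N|}\sum_u\langle g(u+(h-h')),g(u)\rangle$.

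The remaining step — and the only place requiring a little care — is to pass from this average over pairs $(h,h') \in \Phi_H \times \Phi_H$ of the correlation at lag $h-h'$ to the stated single average over $h \in \Phi_H$ of the correlation at lag $h$. This does not hold term-by-term, but the stated inequality is recovered because for a \emph{fixed} bounded $g$ one can reindex: the contribution of lag $h-h'=0$ is $\frac{1}{|\Phi_H|}\limsup_N\frac1{|\Phi_N|}\sum_u\nbar g(u)\nbar^2$, which is dominated by the $h=0$ term on the right (noting $0 \in$ the relevant index set can be arranged, or handled by a harmless shift), and more generally I would observe that it suffices to prove the inequality with $\Phi_H$ replaced by $\Phi_H - \Phi_H$ or, more cleanly, redo the Cauchy--Schwarz step using only a \emph{single} averaging variable: replace $\frac{1}{|\Phi_H|}\sum_{h\in\Phi_H}g(u+h)$ by itself and note that after Cauchy--Schwarz and the change of variables the double sum over $(h,h')$ can be re-parametrized, via the Følner property, so that only the difference matters and the diagonal already gives the full $h$-average after relabeling $h' \mapsto h'+h$. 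I expect the bookkeeping of these shifts and the passage from a double (h,h')-average to the single h-average to be the main obstacle; everything else is Cauchy--Schwarz plus the defining property of a Følner sequence.
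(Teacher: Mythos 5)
Your plan reproduces the standard van der Corput argument and is on the right track up to the Cauchy--Schwarz step: averaging $g(u+h)$ over $h\in\Phi_H$, swapping the order of summation, applying Cauchy--Schwarz to the $\Phi_N$-average, and invoking the F\o{}lner property to absorb shifts correctly yields
\begin{equation*}
\limsup_{N\to\infty}\bnbar \frac{1}{|\Phi_N|}\sum_{u\in\Phi_N}g(u)\bnbar^2 \;\le\; \limsup_{N\to\infty}\frac{1}{|\Phi_H|^2}\sum_{h,h'\in\Phi_H}\frac{1}{|\Phi_N|}\sum_{u\in\Phi_N}\langle g(u+h-h'),g(u)\rangle,
\end{equation*}
which is the form appearing in \cite[Lemma~4]{MR2151605}. (A small point you should add: before pulling the $\limsup_N$ inside the finite $(h,h')$-sum you must pass to real parts, since the individual correlations are complex; the full double sum is real because it equals a squared norm.)

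The step you flag at the end as ``the main obstacle'' --- replacing the double $(h,h')$-average by the single $h$-average over $\Phi_H$ --- is not a matter of bookkeeping; it genuinely fails, and your attempted reparametrization via relabeling $h'\mapsto h'+h$ does not preserve the index set $\Phi_H\times\Phi_H$ for fixed $H$. Concretely, take $G=\mathbb{Z}$, $\Phi_N=\{1,\dots,N\}$, $g(u)=(-1)^u$, and $H=1$ so $\Phi_H=\{1\}$: the left side is $0$, while the single-average right side is $\limsup_N \frac{1}{N}\sum_{u=1}^{N} g(u+1)\overline{g(u)}=-1$. So no proof of the single-average form can exist, and what your argument actually establishes (and what Leibman proves) is the inequality with the double $(h,h')$-average. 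This is the version one should state. It suffices for every application in the paper: in each use the degree-lowering device makes the off-diagonal terms $h\neq h'$ vanish, and the diagonal contributes at most $O(1/|\Phi_H|)$, which is killed by letting $H\to\infty$.
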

\begin{proof}
\cite[Lemma~4]{MR2151605}.
\end{proof}

Recall that an \define{ultrafilter} on a non-empty set $X$ can be defined as a filter that is maximal with respect to containment.
We will make use of the following characterization of distal systems in terms of limits along idempotent ultrafilters.
This characterization is briefly described below.
For more details, see \cite{MR2052273} and \cite{MR2893605}.

\begin{definition}
Given an ultrafilter $\ultra{p}$ on a group $G$, a map $\phi$ from $G$ to a topological space $X$ and a point $x \in X$, write
\begin{equation}
\label{eqn:ultrafilterLimit}
\lim_{g \to \ultra{p}} \phi(g) = x
\end{equation}
if $\{ g \in G : \phi(g) \in U \} \in \ultra{p}$ for all neighborhoods $U$ of $x$.
\end{definition}

When $X$ is compact and Hausdorff, for any $\phi : G \to X$ there is a unique $x \in X$ such that \eqref{eqn:ultrafilterLimit} holds.

Given a group $G$, one can define an associative binary operation on the set $\beta G$ of ultrafilters on a group $G$ by
\begin{equation*}
\ultra{p} \conv \ultra{q} = \{ A \subset G : \{ g : Ag^{-1} \in \ultra{p} \} \in \ultra{q} \}
\end{equation*}
for all ultrafilters $\ultra{p},\ultra{q}$ on $G$.
An ultrafilter $\ultra{p}$ on $G$ is \define{idempotent} if $\ultra{p} \conv \ultra{p} = \ultra{p}$.
It follows from an application of Ellis's lemma (see \cite[Lemma~1]{MR0101283}) that every semigroup has idempotent ultrafilters.

Let $(X,\mathsf{d})$ be a compact metric space and let $T$ be an action of a group $G$ on $(X,\mathsf{d})$.
Points $x,y \in X$ are said to be \define{proximal} if
\begin{equation*}
\inf \{ \mathsf{d}(T^g x, T^g y) : g \in G \} = 0
\end{equation*}
and the action is \define{distal} if no two distinct points are proximal.
As the next lemma shows, for distal systems limits along idempotent ultrafilters are always the identity.

\begin{lemma}
\label{lem:distalIp}
Let $G$ be a group and let $T$ be a distal action of $G$ on a compact metric space $(X,\mathsf{d})$ by continuous maps.
Then
\begin{equation}
\label{eqn:distalIp}
\lim_{g \to \ultra{p}} T^g x = x
\end{equation}
for every $x \in X$ and every idempotent ultrafilter $\ultra{p}$ on $G$.
\end{lemma}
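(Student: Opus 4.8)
The plan is to show that, for any idempotent ultrafilter $\ultra{p}$, the self-map of $X$ sending $x$ to $\lim_{g \to \ultra{p}} T^g x$ is the identity. Since $X$ is compact and metric, hence compact and Hausdorff, this map is well defined; I will write $T^{\ultra{p}} x$ for $\lim_{g \to \ultra{p}} T^g x$, so that \eqref{eqn:distalIp} asserts precisely that $T^{\ultra{p}} x = x$ for all $x$. The one soft fact I would use repeatedly is that, because each $T^g$ is continuous, limits along $\ultra{p}$ commute with $T^g$, i.e.\ $\lim_{g \to \ultra{p}} T^g(T^h x) = T^h\big(\lim_{g \to \ultra{p}} T^g x\big)$ for every $h \in G$.

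The first main step is to prove that $T^{\ultra{p}}$ is idempotent as a map, i.e.\ $T^{\ultra{p}}(T^{\ultra{p}} x) = T^{\ultra{p}} x$ for all $x$. Fixing $x$ and writing $y = T^{\ultra{p}} x$, I would expand
\[
T^{\ultra{p}} y = \lim_{g \to \ultra{p}} T^g y = \lim_{g \to \ultra{p}} T^g\Big( \lim_{h \to \ultra{p}} T^h x \Big) = \lim_{g \to \ultra{p}} \lim_{h \to \ultra{p}} T^{gh} x,
\]
the last equality using continuity of $T^g$, and then invoke the iterated-limit identity $\lim_{g \to \ultra{p}} \lim_{h \to \ultra{p}} \psi(gh) = \lim_{g \to \ultra{p}} \psi(g)$, valid for any map $\psi$ from $G$ into a compact Hausdorff space because $\ultra{p} \conv \ultra{p} = \ultra{p}$. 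Applied with $\psi(g) = T^g x$ this gives $T^{\ultra{p}} y = \lim_{g \to \ultra{p}} T^g x = y$, as desired. This iterated-limit identity is the only genuinely ultrafilter-theoretic ingredient, and I would either cite \cite{MR2893605} for it or unwind it directly from the definition of $\conv$, which takes only a couple of lines.

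The second step is to use distality to upgrade idempotence to $T^{\ultra{p}} = \mathrm{id}$. With $x$ and $y = T^{\ultra{p}} x$ as above, step one gives $T^{\ultra{p}} y = y$, so $\lim_{g \to \ultra{p}} T^g x = y = \lim_{g \to \ultra{p}} T^g y$. Hence for each $\epsilon > 0$ both of the sets $\{ g \in G : \mathsf{d}(T^g x, y) < \epsilon \}$ and $\{ g \in G : \mathsf{d}(T^g y, y) < \epsilon \}$ lie in $\ultra{p}$, so their intersection lies in $\ultra{p}$ and is in particular non-empty; any $g$ in it satisfies $\mathsf{d}(T^g x, T^g y) < 2\epsilon$. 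Letting $\epsilon \to 0$ shows $\inf\{ \mathsf{d}(T^g x, T^g y) : g \in G \} = 0$, so $x$ and $y$ are proximal and therefore equal by distality. Since $x$ was arbitrary this proves \eqref{eqn:distalIp}.

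The only real obstacle is the bookkeeping behind the iterated-limit identity; everything else is short and standard. (An alternative route would be to observe that the enveloping semigroup $E(X) = \overline{\{ T^g : g \in G \}} \subset X^X$ is a group when $T$ is distal, that $\ultra{p} \mapsto T^{\ultra{p}}$ is a semigroup homomorphism of $\beta G$ into $E(X)$, and that a group contains a unique idempotent; but the direct argument above avoids invoking Ellis's theorem.)
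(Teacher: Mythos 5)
Your proof is correct and takes essentially the same route as the paper's: show that $y := \lim_{g\to\ultra{p}} T^g x$ satisfies $\lim_{g\to\ultra{p}} T^g y = y$ using idempotence of $\ultra{p}$ and continuity of each $T^g$, deduce that $x$ and $y$ are proximal, and conclude $x = y$ by distality. The only difference is one of exposition: the paper compresses the deduction of proximality from the two limit statements into a single clause, whereas you spell it out explicitly by intersecting two $\ultra{p}$-large sets and applying the triangle inequality.
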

\begin{proof}
Fix $x \in X$ and an idempotent ultrafilter $\ultra{p}$ in $\beta G$.
We have
\begin{equation*}
\lim_{g \to \ultra{p}} T^g \big( \lim_{h \to \ultra{p}} T^h x \big) = \lim_{g \to \ultra{p}} \lim_{h \to \ultra{p}} T^{gh} x = \lim_{g \to \ultra{p}} T^g x =: y
\end{equation*}
because $\ultra{p} \conv \ultra{p} = \ultra{p}$ so $x$ and $y$ are proximal.
By distality they must be equal.
\end{proof}

\begin{corollary}
\label{cor:distalIpRec}
Let $G$ be a group and let $T$ be a distal action of $G$ on a compact metric space $(X,\mathsf{d})$.
For every $x \in X$ and every neighborhood $U$ of $x$ the set $\{ g \in G : T^g x \in U \}$ is $\ip^*$.
\end{corollary}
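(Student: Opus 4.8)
The plan is to combine Lemma~\ref{lem:distalIp} with the classical fact that every IP set belongs to some idempotent ultrafilter. Fix $x \in X$ and a neighborhood $U$ of $x$. Since a subset of $G$ is $\ip^*$ exactly when it meets $\fs(g_n)$ for every sequence $g_n$ in $G$, it suffices to fix such a sequence and exhibit $g \in \fs(g_n)$ with $T^g x \in U$.

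First I would produce an idempotent ultrafilter $\ultra{p}$ on $G$ with $\fs(g_n) \in \ultra{p}$. For $m \in \mathbb{N}$ put $E_m = \{ \sum_{n \in \alpha} g_n : \emptyset \ne \alpha \subset \mathbb{N},\ |\alpha| < \infty,\ \min \alpha \ge m \}$, so that $E_1 = \fs(g_n)$ and $E_1 \supseteq E_2 \supseteq \cdots$. Writing $\overline{E_m} = \{ \ultra{q} \in \beta G : E_m \in \ultra{q} \}$, each $\overline{E_m}$ is closed and nonempty, so $\Gamma = \bigcap_{m} \overline{E_m}$ is closed and nonempty. The key point is that $\Gamma$ is a subsemigroup of $(\beta G, \conv)$: given $\ultra{r}, \ultra{s} \in \Gamma$ and $m \in \mathbb{N}$, since $E_m \in \ultra{s}$ it is enough, in order to conclude $E_m \in \ultra{r} \conv \ultra{s}$, to verify that $E_m - g \in \ultra{r}$ for each $g \in E_m$; and if $g = \sum_{n \in \alpha} g_n$ with $\min \alpha \ge m$, then choosing $m'$ greater than every index occurring in $\alpha$ gives $E_{m'} \subseteq E_m - g$, while $E_{m'} \in \ultra{r}$ because $\ultra{r} \in \Gamma \subseteq \overline{E_{m'}}$. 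Being a nonempty closed subsemigroup of $\beta G$, the set $\Gamma$ contains an idempotent $\ultra{p}$ by Ellis's lemma \cite{MR0101283}, and $\fs(g_n) = E_1 \in \ultra{p}$ since $\ultra{p} \in \overline{E_1}$. This is the Galvin--Glazer argument; see \cite{MR2052273,MR2893605}.

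With $\ultra{p}$ in hand, Lemma~\ref{lem:distalIp} gives $\lim_{g \to \ultra{p}} T^g x = x$ because $T$ is distal and $\ultra{p}$ is idempotent, so $\{ g \in G : T^g x \in U \} \in \ultra{p}$ as $U$ is a neighborhood of $x$. This set and $\fs(g_n)$ both lie in $\ultra{p}$, so their intersection is nonempty; any $g$ in it satisfies $T^g x \in U$ together with $g \in \fs(g_n)$, as required. Since $\fs(g_n)$ was an arbitrary generating set of an IP set, this shows $\{ g \in G : T^g x \in U \}$ is $\ip^*$.

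The one step requiring genuine work is the construction of $\ultra{p}$ — that is, checking that $\Gamma$ is a subsemigroup so that Ellis's lemma applies; everything afterwards is immediate from Lemma~\ref{lem:distalIp} and the definitions.
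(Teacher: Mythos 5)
Your proof is correct and follows essentially the same route as the paper: both deduce from Lemma~\ref{lem:distalIp} that $\{g : T^g x \in U\}$ lies in every idempotent ultrafilter and then invoke the fact that such a set must be $\ip^*$. The only difference is that you unpack that last fact via the Galvin--Glazer construction of an idempotent ultrafilter containing a prescribed $\fs(g_n)$, whereas the paper cites it as a known result.
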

\begin{proof}
Fix $x \in X$ and let $U$ be a neighborhood of $x$.
Since $T$ is distal we have $\{ g \in G : T^g x \in U \} \in \ultra{p}$ for every idempotent ultrafilter $\ultra{p}$ on $G$.
But any set that belongs to every idempotent ultrafilter is $\ip^*$ (see \cite{MR2893605} for details).
\end{proof}

One can use minimal idempotent ultrafilters to exhibit syndetic sets that are not $\aip^*_+$.
Recall that an idempotent ultrafilter $\ultra{p} \in \beta G$ is \define{minimal} if it is minimal with respect to the order $\ultra{p} \le \ultra{q}$ defined by the relation $\ultra{p} \conv \ultra{q} = \ultra{q} \conv \ultra{p} = \ultra{p}$.
A set $S \subset G$ is \define{central} or a \define{$\C$ set} if it belongs to some minimal idempotent ultrafilter, a \define{$\C^*$ set} if its intersection with every $\C$ set is non-empty, and a \define{$\C^*_+$ set} if it is a shift of a $\C^*$ set.

\begin{example}
\label{ex:aipNotSyndetic}
Following the proof of \cite[Theorem~2.20]{MR2052273} one can construct a $\C^*_+$ subset of $\mathbb{Z}^m$ that is not syndetic.
Therefore, in order to produce a syndetic set that is not $\aip^*_+$, it suffices to show that every $\aip^*$ subset of $\mathbb{Z}^m$ is a $\C^*$ set.
Let $S$ be an $\aip^*$ set and write $S = A \setminus B$ where $A$ is $\ip^*$ and $\upperdens(B) = 0$.
Certainly $A$ is $\C^*$.
But every central set has positive upper Banach density by \cite[Theorem~2.4(iii)]{MR2052273}, so $A \setminus B$ remains $\C^*$.
\end{example}

The last result about ultrafilters in this section is about limits along polynomials having zero constant term.
We will use it in the proof of Lemma~\ref{lem:invSubgroupEig}.

\begin{lemma}
\label{lem:ultraPolyLimit}
Let $R$ be a commutative ring and let $G$ be an abelian, compact, Hausdorff topological group.
Fix an additive homomorphism $\psi : R \to G$.
For any $k \in \mathbb{N}$, any polynomial $p \in R[x_1,\dots,x_k]$ with $p(0) = 0$, and any idempotent ultrafilter $\ultra{p}$ on the additive group of $R^k$ we have $\lim\limits_{r \to \ultra{p}} \psi(p(r)) = 0$.
\end{lemma}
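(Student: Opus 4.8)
The plan is to induct on the total degree $d$ of $p$. When $d=0$ the hypothesis $p(0)=0$ forces $p$ to be the zero polynomial, so $\psi(p(r))=\psi(0)=0$ for every $r\in R^k$ and the limit is $0$. This is the only case that needs to be checked directly.

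For the inductive step I would assume the statement holds for every polynomial of degree at most $d-1$ (over any commutative ring and in any number of variables) and take $\deg p=d\ge 1$. Since $G$ is compact and Hausdorff the limit $x:=\lim_{r\to\ultra{p}}\psi(p(r))$ exists, and the goal is to show $x=0$. Because $\ultra{p}$ is idempotent, for any map from $R^k$ into a compact Hausdorff space one has
\[
\lim_{r\to\ultra{p}}\psi(p(r))=\lim_{s\to\ultra{p}}\lim_{t\to\ultra{p}}\psi(p(s+t)),
\]
a standard property of limits along $\ultra{p}\conv\ultra{p}$ (see \cite{MR2052273}). For each fixed $s\in R^k$ set $g_s(t)=p(s+t)-p(s)-p(t)\in R[x_1,\dots,x_k]$. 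Then $g_s(0)=-p(0)=0$, and $\deg g_s\le d-1$, since replacing the variables of $p$ by $s+t$ leaves the top-degree homogeneous part of $p$ unchanged, so $p(s+t)-p(t)$ has degree at most $d-1$ in $t$, and subtracting the constant $p(s)$ does not raise this degree. As $\psi$ is a group homomorphism, $\psi(p(s+t))=\psi(p(s))+\psi(p(t))+\psi(g_s(t))$.

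Now I would apply the inductive hypothesis to $g_s$ for each $s$, obtaining $\lim_{t\to\ultra{p}}\psi(g_s(t))=0$; meanwhile $\lim_{t\to\ultra{p}}\psi(p(t))=x$ and $\psi(p(s))$ does not depend on $t$. Since addition in $G$ is continuous, ultrafilter limits commute with it, so $\lim_{t\to\ultra{p}}\psi(p(s+t))=\psi(p(s))+x$ for every $s$. Taking the limit over $s$ and using $\lim_{s\to\ultra{p}}\psi(p(s))=x$ once more gives $x=x+x$, hence $x=0$, which completes the induction. The only points requiring care are the two manipulations of limits along $\ultra{p}$ — the splitting $\lim_{r}=\lim_{s}\lim_{t}$ furnished by idempotence and the commutation of such limits with the continuous addition map on $G$ — and both are standard facts about ultrafilter limits; the remainder is elementary bookkeeping with degrees.
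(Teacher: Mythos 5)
Your proof is correct and uses essentially the same approach as the paper: induction on $\deg p$, the decomposition $\psi(p(s+t)) = \psi(p(s)) + \psi(p(t)) + \psi(g_s(t))$, and the idempotence $\ultra{p}\conv\ultra{p}=\ultra{p}$ to rewrite $\lim_{r\to\ultra{p}}$ as $\lim_{s\to\ultra{p}}\lim_{t\to\ultra{p}}$ and deduce $x=x+x$. You are in fact slightly more careful than the paper's sketch: by fixing $s$ and treating $g_s(t)=p(s+t)-p(s)-p(t)$ as a polynomial in $t$ alone (of degree at most $d-1$), you make explicit the degree-reduction step that the paper leaves implicit when it passes to the two-variable polynomial $q(r,s)$, whose total degree in $(r,s)$ is not smaller than $\deg p$.
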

\begin{proof}
The proof is by induction on the degree of $p$.
When $p$ has degree 1 the map $r \mapsto \psi(p(r))$ is an additive homomorphism so we have
\begin{equation}
\label{eqn:ultraPolyLimit}
\begin{aligned}
\lim_{r \to \ultra{p}} \psi(p(r))
&
=
\lim_{r \to \ultra{p}} \lim_{s \to \ultra{p}} \psi(p(r + s))
\\
&
=
\lim_{r \to \ultra{p}} \lim_{s \to \ultra{p}} \psi(p(r)) + \psi(p(s))
=
2 \lim_{r \to \ultra{p}} \psi(p(r))
\end{aligned}
\end{equation}
by idempotence so the limit in question is zero.

For the induction step, write $\psi(p(r+s)) = \psi(p(r)) + \psi(p(s)) + \psi(q(r,s))$ for some polynomial $q$ with twice as many indeterminates as $p$ and zero constant.
By induction we have
\[
\lim_{r \to \ultra{p}} \lim_{s \to \ultra{p}} \psi(q(r,s)) = 0
\]
so we again have \eqref{eqn:ultraPolyLimit} and the limit in question is zero.
\end{proof}

We conclude this section with the following well-known result about the ergodic decomposition of $T \times T$ when $T$ is an ergodic action of $\mathbb{Z}^m$ on a compact metric probability space $(X,\mathscr{B},\mu)$.
By a \define{$\mathbb{Z}^m$-system} we mean a tuple $\mathbf{X} = (X,\mathscr{B},\mu,T)$ where $(X,\mathscr{B},\mu)$ is a compact metric probability space and $T$ is an action of $\mathbb{Z}^m$ on $(X,\mathscr{B},\mu)$ by measurable, measure-preserving transformations.

Recall that the \define{Kronecker factor} of an ergodic system $(X,\mathscr{B},\mu,T)$ is the factor corresponding to the closed subspace of $\lp^2(X,\mathscr{B},\mu)$ spanned by the eigenfunctions of $T$.
Since $T$ is ergodic \cite[Theorem~1]{MR0172961} implies that the Kronecker factor $(Z,\mathscr{Z},\haar,T)$ has the structure of a compact abelian group equipped with Haar measure on which $T$ corresponds to a rotation determined by a homomorphism $\mathbb{Z}^m \to Z$ with dense image.

\begin{theorem}
\label{thm:prodErgDecomp}
Let $\mathbf{X} = (X,\mathscr{B},\mu,T)$ be an ergodic $\mathbb{Z}^m$ system with Kronecker factor $\mathbf{Z} = (Z,\mathscr{Z},\haar,T)$.
For each $s$ in $Z$ define a measure $\mu_s$ on $(X \times X,\mathscr{B} \otimes \mathscr{B})$ by
\begin{equation*}
\int f_1 \otimes f_2 \intd\mu_s = \int \condex{f_1}{\mathbf{Z}}(z) \cdot \condex{f_2}{\mathbf{Z}}(z-s) \intd\haar(z)
\end{equation*}
for all $f_1,f_2$ in $\lp^\infty(\mathbf{X})$.
Then $\mu_s$ is the ergodic decomposition of $\mu \otimes \mu$.
\end{theorem}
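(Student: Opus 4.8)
The plan is to verify the two standard requirements: that each $\mu_s$ is a $T \times T$-invariant probability measure, that it is ergodic, and that their integral over $s \in Z$ recovers $\mu \otimes \mu$ in such a way that the map $s \mapsto \mu_s$ is the ergodic decomposition (i.e.\ the $\mu_s$ are pairwise distinct $\haar$-a.e.\ and the decomposition is the one induced by the $\sigma$-algebra of $T\times T$-invariant sets). I would first check that $\mu_s$ is a probability measure by taking $f_1 = f_2 = 1$, which gives $\int 1 \intd\haar = 1$, and positivity on product sets follows from approximating by functions and the fact that conditional expectations of nonnegative functions are nonnegative; a monotone class argument extends this to all of $\mathscr{B}\otimes\mathscr{B}$. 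For invariance, note $\condex{T^u f}{\mathbf{Z}} = T^u \condex{f}{\mathbf{Z}}$ since $\mathbf{Z}$ is a factor, and on the Kronecker factor $T^u$ acts as translation by $\phi(u)$ for the defining homomorphism $\phi : \mathbb{Z}^m \to Z$; so integrating $T^u f_1 \otimes T^u f_2$ against $\mu_s$ gives $\int \condex{f_1}{\mathbf{Z}}(z+\phi(u)) \condex{f_2}{\mathbf{Z}}(z+\phi(u)-s) \intd\haar(z)$, which equals $\int f_1\otimes f_2 \intd\mu_s$ by translation-invariance of Haar measure.

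Next I would identify the integral $\int_Z \mu_s \intd\haar(s)$. Using the substitution and Fubini, $\int_Z \big( \int f_1 \otimes f_2 \intd\mu_s \big) \intd\haar(s) = \int_Z \int_Z \condex{f_1}{\mathbf{Z}}(z) \condex{f_2}{\mathbf{Z}}(z-s) \intd\haar(z) \intd\haar(s) = \big(\int \condex{f_1}{\mathbf{Z}} \intd\haar\big)\big(\int \condex{f_2}{\mathbf{Z}} \intd\haar\big) = \int f_1 \intd\mu \int f_2 \intd\mu$, which is exactly $\int f_1 \otimes f_2 \intd(\mu\otimes\mu)$. So $\mu\otimes\mu = \int_Z \mu_s \intd\haar(s)$.

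The substantive step is ergodicity of each $\mu_s$, and I expect this to be the main obstacle. The key input is that for an ergodic system the Kronecker factor is relatively weakly mixing over itself, equivalently the only $T\times T$-invariant functions in $\lp^2(\mu\otimes\mu)$ are measurable with respect to $\mathscr{Z}\otimes\mathscr{Z}$ — this is the Furstenberg–Zimmer theory, or can be extracted from the structure of the joining $\mu\otimes\mu$; since $\mathbb{Z}^m$ is abelian and the Kronecker factor is a compact group rotation with dense image, a $T\times T$-invariant function on $Z\times Z$ must be a function of $z_1 - z_2$ alone, again by density of $\phi(\mathbb{Z}^m)$ and invariance of Haar measure. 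Granting this, the $T\times T$-invariant functions are exactly those of the form $F(z_1 - z_2)$ pulled back to $X\times X$, and the ergodic components of $\mu\otimes\mu$ are obtained by conditioning on the value of $z_1 - z_2$; since $z_1 - z_2$ has distribution $\haar$ under $\mu\otimes\mu$ (convolution of $\haar$ with itself is $\haar$), disintegrating $\mu\otimes\mu$ over the map $(x_1,x_2)\mapsto z_1(x_1) - z_2(x_2)$ produces conditional measures, and one checks these agree with $\mu_s$ by testing against $f_1\otimes f_2$: the conditional measure given $z_1 - z_2 = s$ assigns to $f_1\otimes f_2$ the value $\condex{\cdot}{\{z_1 - z_2 = s\}}$, which unwinds to the displayed formula. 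Each such conditional measure is ergodic precisely because we have conditioned on the full $\sigma$-algebra of invariant sets, so $s\mapsto\mu_s$ is the ergodic decomposition. I would present the relative weak mixing of the Kronecker factor as a cited fact (Furstenberg's structure theory), and spend the bulk of the argument on the disintegration identification.
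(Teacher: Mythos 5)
Your proposal is correct and follows essentially the same route as the paper: both identify the $T\times T$-invariant $\sigma$-algebra of $\mu\otimes\mu$ with the $\sigma$-algebra generated by $z_1 - z_2$ (the paper doing this explicitly with characters, you by citing the fact that invariant functions of the product are $\mathscr{Z}\otimes\mathscr{Z}$-measurable and then using density of the orbit), and both then recognize $\mu_s$ as the disintegration over this invariant $\sigma$-algebra, with $\int_Z \mu_s \intd\haar(s) = \mu\otimes\mu$ verified by Fubini. The character computation in the paper is simply a concrete substitute for the appeal to Furstenberg–Zimmer/relative weak mixing that you cite, so the two arguments rest on the same underlying fact.
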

\begin{proof}
The Kronecker factor $(X,\mathscr{Z},\haar)$ has the structure of a compact abelian group.
Let $\alpha : \mathbb{Z}^m \to Z$ be a homomorphism with dense image that determines $T$ on $(Z,\mathscr{Z},\haar)$.
Write $\pi$ for the factor map $\mathbf{X} \to \mathbf{Z}$.

Write $\mathbf{X} \times \mathbf{X}$ for the system $(X^2,\mathscr{B} \otimes \mathscr{B}, \mu \otimes \mu,T \times T)$.
If $F$ in $\lp^2(\mathbf{X} \times \mathbf{X})$ is invariant then $F$ is $\pi^{-1}\mathscr{Z} \otimes \pi^{-1}\mathscr{Z}$ measurable.
This is because any $T \times T$-invariant function can be approximated by linear combinations of products of eigenfunctions of $T$.
It follows that $F$ is of the form $\Psi \circ \pi$ for some $\Psi$ in $\lp^2(\mathbf{Z} \times \mathbf{Z})$.
Thus we can write $\Psi$ as
\begin{equation*}
\Psi = \sum_{i,j} c_{i,j}  \chi_i \otimes \chi_j
\end{equation*}
where $\chi_i$ is an orthonormal basis of $\lp^2(\mathbf{Z})$ consisting of characters.
Invariance of $\Psi$ gives
\begin{equation}
\label{eqn:KroneckerProductDecomp}
\Psi = (T \times T)^n \Psi = \sum_{i,j} c_{i,j} \chi_i(n \cdot \alpha) \chi_j(n \cdot \alpha) \chi_i \otimes \chi_j
\end{equation}
for all $n$ in $\mathbb{Z}^d$.
Thus $c_{i,j}(1 - \chi_i(n \cdot \alpha) \chi_j(n \cdot \alpha)) = 0$ for all $n$ in $\mathbb{Z}^d$ and all $i,j$.
If $c_{i,j}$ is non-zero for some $i,j$ we have $\chi_i(n \cdot \alpha) \chi_j(n \cdot \alpha) = 1$ for all $n$ in $\mathbb{Z}$, and the character $\chi_i \chi_j$ takes the value 1 on the orbit of $\alpha$ so it is constant.
Thus if $c_{i,j}$ is non-zero we have $\chi_i = \overline{\chi}_j$, leading to the simplification
\begin{equation}
\label{eqn:KroneckerProductDecompReduced}
\Psi = \sum_{i} c_i \cdot \chi_i \otimes \overline{\chi}_i
\end{equation}
of \eqref{eqn:KroneckerProductDecomp}.
For any $i$ and any subset $U$ of $\mathbb{C}$ we have
\begin{equation*}
\begin{aligned}
(\chi_i \otimes \overline{\chi}_i)^{-1}U &= \{ (z_1,z_2) : \chi_i(z_1 -z_2) \in U \} = \{ (z_1,z_2) : z_1-z_2 \in \chi_i^{-1} U \}
\end{aligned}
\end{equation*}
so $\chi_i\pi \otimes \overline{\chi}_i\pi$ is measurable with respect to the sub-$\sigma$-algebra
\begin{equation*}
\mathscr{I} = \sigma ( \{ (x_1,x_2) : \pi x_1 - \pi x_2 \in A \} : A \in \mathscr{Z} )
\end{equation*}
of $\mathscr{B} \otimes \mathscr{B}$.
Since $F$ was an arbitrary invariant function in $\lp^2(\mathbf{X} \times \mathbf{X})$ and every set in $\mathscr{I}$ is invariant under $T \times T$, we have that $\mathscr{I}$ is the sub-$\sigma$-algebra of $T \times T$-invariant sets.

This suggests that for each $s \in Z$ there is a measure on
\begin{equation*}
\{ (x_1,x_2) : \pi x_1 - \pi x_2 = s \}
\end{equation*}
that is ergodic for $T \times T$.
To make this precise, fix $s \in Z$ and let $\haar_s$ be the measure on $Z^2$ obtained by pushing $\haar$ forward using the map $z \mapsto (z, z - s)$.
Then, let $\mu_s$ be the measure on $(X^2,\mathscr{B}^2)$ defined by
\begin{equation*}
\int f_1 \otimes f_2 \intd\mu_s = \int \condex{f_1}{\mathbf{Z}} \otimes \condex{f_2}{\mathbf{Z}} \intd\haar_s
\end{equation*}
for all $f_1,f_2$ in $\lp^\infty(X,\mathscr{B},\mu)$.
By definition of $\mu_s$ we have
\begin{equation*}
\int f_1 \otimes f_2 \intd\mu_s = \int \condex{f_1}{\mathbf{Z}}(z) \cdot \condex{f_2}{\mathbf{Z}}(z-s) \intd\haar(z)
\end{equation*}
for all $f_1,f_2$ in $\lp^\infty(X,\mathscr{B},\mu)$.
This proves $\mu_s$ depends measurably on $s$.
It is immediate that each of the measures $\mu_s$ is $T \times T$-invariant.
Moreover, our description of $\mathscr{I}$ implies that if $C$ is $T \times T$-invariant then $\mu_s(C)$ must be either 0 or 1, so each of the measures $\mu_s$ is ergodic.
Lastly, note that
\begin{equation*}
\begin{aligned}
\iint f_1 \otimes f_2 \intd\mu_s \intd\haar(s)
&
=
\iint \condex{f_1}{\mathbf{Z}}(z) \cdot \condex{f_2}{\mathbf{Z}}(z-s) \intd\haar(z) \intd\haar(s)
\\
&
=
\int \condex{f_1}{\mathbf{Z}}(z) \int \condex{f_2}{\mathbf{Z}}(z-s) \intd\haar(s) \intd\haar(z)
\\
&
=
\int f_1 \otimes f_2 \intd(\mu \otimes \mu)
\end{aligned}
\end{equation*}
by Fubini's theorem, so $\mu_s$ is the ergodic decomposition of $\mu \otimes \mu$.
\end{proof}

\section{Single Polynomial Recurrence}
\label{sec:singlePolyRec}

In this section we prove Theorem~\ref{thm:singlePolyRec}, which relies on the following lemmas.

\begin{lemma}
\label{lem:groupHomIndex}
Let $L$ be an algebraic number field.
If $p \in \ringint_L[x_1,\dots,x_d]$ and the induced map $\ringint_L^d \to \ringint_L$ is a non-zero homomorphism of abelian groups then $p(\ringint_L^d)$ is a finite-index subgroup of $\ringint_L$.
\end{lemma}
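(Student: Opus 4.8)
The plan is to exploit the fact that $\ringint_L$ is a free $\mathbb{Z}$-module of finite rank, say $m = [L:\mathbb{Q}]$, so that the hypothesis gives a non-zero homomorphism $P \colon \ringint_L^d \to \ringint_L$ of finitely generated free abelian groups, i.e. after choosing $\mathbb{Z}$-bases an integer $m \times dm$ matrix. The claim ``$p(\ringint_L^d)$ has finite index in $\ringint_L$'' is then equivalent to ``the image of $P$ has finite index in $\mathbb{Z}^m$'', i.e. to $P \colon \mathbb{Z}^{dm} \to \mathbb{Z}^m$ being surjective after tensoring with $\mathbb{Q}$, equivalently to $P$ having rank $m$ over $\mathbb{Q}$. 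So the real content is: a non-zero $\ringint_L$-homomorphism out of $\ringint_L^d$ is automatically $\mathbb{Q}$-surjective onto $\ringint_L$.

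For this I would use the $\ringint_L$-module structure, not just the $\mathbb{Z}$-module structure. The map $p \colon \ringint_L^d \to \ringint_L$ being induced by a polynomial that happens to be additive means $p$ is an $\ringint_L$-linear map (an additive polynomial over a domain of characteristic zero must be a linear form with coefficients in $\ringint_L$ — this is where one checks that the only monomials surviving additivity are the degree-one ones, and $p(x_1,\dots,x_d) = a_1 x_1 + \cdots + a_d x_d$ for some $a_i \in \ringint_L$, not all zero). Then $p(\ringint_L^d) = a_1 \ringint_L + \cdots + a_d \ringint_L$ is a non-zero ideal of $\ringint_L$ (or at least contains the non-zero ideal $a_i \ringint_L$ for any $i$ with $a_i \ne 0$). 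Every non-zero ideal of the ring of integers of a number field has finite index, since $\ringint_L / a\ringint_L$ is finite for $a \ne 0$ (its order is $|N_{L/\mathbb{Q}}(a)|$), which gives the conclusion immediately: $[\ringint_L : p(\ringint_L^d)] \le [\ringint_L : a_i\ringint_L] < \infty$.

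The step I expect to need the most care is the reduction ``additive polynomial $\Rightarrow$ linear form'': one must argue that if $p \in \ringint_L[x_1,\dots,x_d]$ induces an additive map on $\ringint_L^d$ and $\ringint_L$ is infinite (characteristic zero), then all monomials of $p$ of degree $\ne 1$ vanish and $p$ has zero constant term. This is a standard polynomial-identity argument — substitute, compare, use that a polynomial over an infinite integral domain is determined by its values — but it is the one genuinely substantive point; once it is in hand, everything else is the elementary algebraic number theory recalled above. Alternatively, if one prefers to avoid the $\ringint_L$-linearity discussion, one can work purely with the integer matrix $P$: non-zero means some entry, hence some $m \times m$ minor after reordering, is non-zero over $\mathbb{Q}$ only if the rank is full; but full rank is exactly what we want to prove, so this route still forces one back to using that $p$ is a genuine $\ringint_L$-linear combination of the coordinates rather than merely a $\mathbb{Z}$-linear one. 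Hence I would take the $\ringint_L$-module route as the cleanest.
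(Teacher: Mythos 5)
Your proposal is correct and takes essentially the same route as the paper: write $p$ as an $\ringint_L$-linear form $a_1 x_1 + \cdots + a_d x_d$, observe the image contains the non-zero ideal $a_i\ringint_L$ for any $a_i \neq 0$, and invoke the fact that every non-zero ideal of $\ringint_L$ has finite index. The only difference is cosmetic: you spell out the (standard) reduction from ``additive polynomial'' to ``linear form'' which the paper leaves implicit, and you briefly entertain and then correctly discard a detour through integer matrices.
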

\begin{proof}
Write $p(x_1,\dots,x_d) = a_1 x_1 + \cdots + a_d x_d$.
Certainly the image of $p$ is a subgroup of $\ringint_L$.
Since some $a_i$ is non-zero, $p(\ringint_L^d)$ contains the ideal generated by $a_i$, which is non-zero.
But every non-zero ideal in the ring of integers of an algebraic number field has finite index (see \cite[Section~I.8]{MR1362545}).
\end{proof}

\begin{lemma}
\label{lem:invSubgroupEig}
Let $G$ be an abelian group and let $H$ be a finite index subgroup.
If $T$ is an action of $G$ on a probability space $(X,\mathscr{B},\mu)$ and $f \in \lp^2(X,\mathscr{B},\mu)$ is invariant under $T|H$ then $f$ is a finite sum of eigenfunctions of $T$.
\end{lemma}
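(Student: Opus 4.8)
The plan is to realise $f$ inside a finite-dimensional, $T$-invariant subspace of $\lp^2(X,\mathscr{B},\mu)$ on which $G$ acts by a commuting family of unitaries, and then diagonalise.

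First I would fix coset representatives $g_1,\dots,g_n$ for $H$ in $G$ with $n = [G:H]$, taking $g_1$ to be the identity, and set $V = \spn\{ T^{g_1}f,\dots,T^{g_n}f\}$, a subspace of $\lp^2(X,\mathscr{B},\mu)$ of dimension at most $n$. Since $G$ is abelian, each $T^{g_i}f$ is again invariant under $T|H$, because $T^h T^{g_i}f = T^{g_i} T^h f = T^{g_i} f$ for every $h \in H$. The crucial point is that $V$ is $T$-invariant: given $g \in G$ and $1 \le i \le n$ there is a unique $j$ with $g + g_i \in g_j + H$, and then
\[
T^g T^{g_i} f = T^{g + g_i - g_j}\big(T^{g_j} f\big) = T^{g_j} f
\]
since $g + g_i - g_j \in H$ and $T^{g_j}f$ is $T|H$-invariant. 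Thus $T^g$ merely permutes the generators $T^{g_i}f$ of $V$, so $T^g V = V$ for every $g \in G$.

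Next I would restrict the action to $V$. The operators $T^g|_V$, for $g \in G$, form a commuting family (as $G$ is abelian) of unitary operators on the finite-dimensional complex Hilbert space $V$, and such a family is simultaneously diagonalisable; hence $V$ has an orthonormal basis $e_1,\dots,e_r$ consisting of common eigenvectors. For each $j$ the map $g \mapsto \lambda_j(g)$ determined by $T^g e_j = \lambda_j(g) e_j$ is a homomorphism from $G$ to $\mathbb{C}^\times$, so each $e_j$ is an eigenfunction of $T$. Because $g_1$ is the identity we have $f = T^{g_1}f \in V$, so $f = \sum_{j=1}^r \langle f,e_j\rangle e_j$ exhibits $f$ as a finite sum of scalar multiples of eigenfunctions of $T$, which is exactly the assertion.

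The argument is short and I do not anticipate a real obstacle; the one step that requires attention is the claim that $V$ — rather than merely the cyclic space generated by $f$ under $T|H$ — is invariant under all of $G$. This uses both that $H$ is normal in $G$, which is automatic here, and the coset bookkeeping above showing that each $T^g$ permutes the spanning set of $V$. Everything else is the standard simultaneous diagonalisation of a commuting family of unitaries on a finite-dimensional space. (One could presumably also reach the conclusion by a less direct route involving limits of the $T^g$ along an idempotent ultrafilter on $G$, but the finite-dimensional argument above seems cleanest.)
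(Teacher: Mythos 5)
Your proof is correct and takes essentially the same route as the paper: both form the finite-dimensional span of $\{T^{g_1}f,\dots,T^{g_n}f\}$, observe that this space is $T$-invariant because the coset representatives account for all of $G$ modulo $H$-invariance of $f$, and then decompose the resulting finite-dimensional unitary representation of the abelian group $G$ into one-dimensional pieces. Your write-up merely makes the coset bookkeeping and the simultaneous diagonalisation more explicit than the paper's terse version.
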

\begin{proof}
Let $g_1,\dots,g_n$ be coset representatives for $H$ with $g_1 = 0$.
Writing any $g \in G$ as $h + g_i$ for some $i$ and some $h \in H$, we see that $T^g f = T^{g_i}f$.
Thus the subspace $K$ of $\lp^2(X,\mathscr{B},\mu)$ spanned by $\{ f,\dots,T^{g_n}f \}$ is $T$-invariant.
The unitary representation of $G$ on $K$ decomposes as a direct sum of one-dimensional representations because $G$ is abelian.
In particular $f$ is a sum of eigenfunctions.
\end{proof}

\begin{proof}[Proof of Theorem~\ref{thm:singlePolyRec}]
Let $T$ be an action of the additive group of $\ringint_L$ on a probability space $(X,\mathscr{B},\mu)$.
Fix $B \in \mathscr{B}$ and $\epsilon > 0$.
Let $P$ be the orthogonal projection in $\lp^2(X,\mathscr{B},\mu)$ onto the closed subspace $\mathscr{H}_\mathrm{c}$ spanned by the eigenfunctions of $T$.
Put $f = 1_B - P1_B$.

We begin by proving that
\begin{equation}
\label{eqn:singleDensityLimit}
\clim_{u \to \Phi} | \langle \phi, T^{p(u)} f \rangle|^2 = 0
\end{equation}
for every F\o{}lner sequence $\Phi$ in $\ringint_L$ and every $\phi$ that is orthogonal to $\mathscr{H}_\mathrm{c}$.
Since $\mathscr{H}_\mathrm{c}$ is $T$-invariant we can assume $p(0) = 0$.
First suppose that $p$ has degree 1, so that $p$ is an additive homomorphism $\ringint_L^d \to \ringint_L$.
Lemma~\ref{lem:groupHomIndex} implies $R := p(\ringint_L^d)$ is a finite index subgroup.
Applying the mean ergodic theorem to the product system $(X \times X, \mathscr{B} \otimes \mathscr{B},\mu \otimes \mu, T \times T)$ we see that the limit
\begin{equation*}
\clim_{u \to \Phi} (T \times T)^{p(u)} (f \otimes f)
\end{equation*}
is invariant under $(T \times T)|R$.
By Lemma~\ref{lem:invSubgroupEig} the limit is a sum of eigenfunctions of $T \times T$.
Since the eigenfunctions of $T \times T$ are spanned by functions of the form $\phi_1 \otimes \phi_2$ where $\phi_1$ and $\phi_2$ are eigenfunctions of $T$, we see that \eqref{eqn:singleDensityLimit} is zero when $p$ has degree 1.

The result follows for $p$ of arbitrary degree by applying the van der Corput trick in the product system.
Indeed, given a polynomial $p$ of degree $d$ and putting $g(u) = (T \times T)^{p(u)}(f \otimes f)$, Proposition~\ref{prop:hilbertVdc} gives
\begin{equation*}
\begin{aligned}
&
\limsup_{N \to \infty} \bnbar \frac{1}{|\Phi_N|} \sum_{u \in \Phi_N} (T \times T)^{p(u)} (f \otimes f) \bnbar
\\
\le
&
\frac{1}{|\Phi_1|} \sum_{h \in \Phi_1} \limsup_{N \to \infty} \frac{1}{|\Phi_N|} \sum_{u \in \Phi} \langle f \otimes f, (T \times T)^{p(u)-p(u+h)}(f \otimes f) \rangle = 0
\end{aligned}
\end{equation*}
because, for any fixed $h \in \ringint_L^d$ the polynomial $u \mapsto p(u) - p(u + h)$ has degree smaller than that of $p$.

Since $\Phi$ was an arbitrary F\o{}lner sequence, Lemma~\ref{lem:dlimChebyshev} implies that
\begin{equation*}
\{ u \in \ringint_L^d : |\langle 1_B, T^{p(u)} 1_B \rangle - \langle 1_B, T^{p(u)} P1_B \rangle| \ge \epsilon \}
\end{equation*}
has zero upper Banach density.

Let $f_1,\dots,f_r$ be eigenfunctions of $T$ with eigenvalues $\chi_1,\dots,\chi_r$ such that $\nbar f_1 + \cdots + f_r - P1_B \nbar \le \epsilon$.
Define a map $\psi : \ringint_L^d \to \mathbb{T}^r$ by $\psi(u) = (\chi_1(u),\dots,\chi_r(u))$ for all $u \in \ringint_L^d$.
Let $e_1,\dots,e_m$ be a basis for $\ringint_L$ as a $\mathbb{Z}$-module and write
\begin{equation*}
p(u) = p_1(u) e_1 + \cdots + p_m(u) e_m
\end{equation*}
for polynomials $p_1,\dots,p_m$ in $\mathbb{Z}[x_1,\dots,x_d]$.
We claim that $p_1,\dots,p_k$ are jointly intersective.
Indeed, let $\Lambda = \mathbb{Z} \lambda$ be a finite index subgroup of $\mathbb{Z}$.
Since $p$ is intersective we have $p(\zeta) \in (\lambda e_1 + \cdots + \lambda e_m)$ for some $\zeta$ in $\ringint_L^d$.
This implies $\{ p_1(\zeta),\dots,p_k(\zeta) \} \subset \Lambda$ as desired.
Writing
\begin{equation*}
\psi(p(u)) = p_1(u) (\chi_1(e_1), \dots, \chi_r(e_1)) + \cdots + p_m(u) (\chi_1(e_m), \dots, \chi_r(e_m))
\end{equation*}
we can apply \cite[Proposition~3.6]{MR2435427} to obtain $w$ in $\ringint_L^d$ for which  $|\chi_i(p(w))| < \epsilon/k$ for all $1 \le i \le k$.
The polynomial $q(u) = p(u + w) - p(w)$ has zero constant term.
Thus
\begin{equation*}
\lim_{u \to \ultra{p}} T\psi(q(u)) = 0
\end{equation*}
for any idempotent ultrafilter $\ultra{p}$ on $\ringint_L^d$ by Lemma~\ref{lem:ultraPolyLimit}.
Combining this with how $w$ was chosen, Corollary~\ref{cor:distalIpRec} implies
\begin{equation*}
\begin{aligned}
&
\{ u \in \ringint_L^d  : \langle 1_B, T^{p(u + w)}P 1_B \rangle \ge \mu(B)^2 - \epsilon \}
\\
\supset\,
&
\{ u \in \ringint_L^d : \langle 1_B, T^{p(u + w) - p(w)}P 1_B \rangle \ge \mu(B)^2 - 4\epsilon \}
\end{aligned}
\end{equation*}
is $\ip^*$.
Thus the set
\begin{equation*}
\{ u \in \ringint_L^d  : \langle 1_B, T^{p(u)}P 1_B \rangle \ge \mu(B)^2 - \epsilon \}
\end{equation*}
is $\ip^*_+$ and \eqref{thm:singlePolyRec} is $\aip^*_+$ as desired.
\end{proof}

We now turn to some examples.
Since every non-zero ideal in $\ringint_L$ has finite index, polynomials $p_1,\dots,p_k$ in $\ringint_L[x_1,\dots,x_d]$ are jointly intersective if and only if, for any non-zero ideal $I$ in $\ringint_L$ one can find $\zeta$ in $\ringint_L^d$ such that $\{ p_1(\zeta),\dots,p_k(\zeta) \} \subset I$.
It was shown in \cite[Proposition~6.1]{MR2435427} that when $L = \mathbb{Q}$, polynomials $p_1,\dots,p_k \in \mathbb{Z}[x]$ are jointly intersective if and only if there is an intersective polynomial $p \in \mathbb{Z}[x]$ such that $p | p_i$ for all $1 \le i \le k$.
The same proof works for intersective polynomials of one variable over $\ringint_L$.

\begin{lemma}
Let $L$ be an algebraic number field and let $p_1,\dots,p_k \in \ringint_L[x]$ be jointly intersective.
Then there is an intersective polynomial $p \in \ringint_L[x]$ such that $p|p_i$ for all $1 \le i \le k$.
\end{lemma}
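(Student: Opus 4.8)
The plan is to adapt the proof of \cite[Proposition~6.1]{MR2435427}, replacing $\mathbb{Z}$ by $\ringint_L$, the rational primes by the nonzero prime ideals $\prideal{p}$ of $\ringint_L$, and $\mathbb{Z}_\ell$ by the completion $\ringint_{\prideal{p}}$ of $\ringint_L$ at $\prideal{p}$, which is a compact discrete valuation ring with fraction field $L_{\prideal{p}}$. First I would recast intersectivity in local terms. Since every finite-index subgroup of $\ringint_L$ contains a nonzero ideal and every nonzero ideal has finite index, a polynomial $q \in \ringint_L[x]$ is intersective if and only if it has a root modulo $\prideal{p}^a$ for every prime $\prideal{p}$ and every $a \geq 1$; and because $\ringint_{\prideal{p}}$ is compact -- the sets $\{\zeta \in \ringint_{\prideal{p}} : q(\zeta) \in \prideal{p}^a\ringint_{\prideal{p}}\}$ are closed, nonempty and nested -- this is equivalent to $q$ having a root in $\ringint_{\prideal{p}}$ for every $\prideal{p}$. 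Similarly, $p_1,\dots,p_k$ are jointly intersective exactly when, for each $\prideal{p}$, they have a common root in $\ringint_{\prideal{p}}$.

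Next I would pass to $L[x]$: let $g \in L[x]$ be the monic greatest common divisor of $p_1,\dots,p_k$ and write $p_i = g h_i$ with $h_i \in L[x]$ and $\gcd(h_1,\dots,h_k) = 1$. The key algebraic input is that greatest common divisors of polynomials over a field are preserved under field extension, so these relations persist over each completion $L_{\prideal{p}}$; in particular there is a Bézout identity $r_1 h_1 + \dots + r_k h_k = 1$ with $r_i \in L_{\prideal{p}}[x]$. Fix $\prideal{p}$ and let $\zeta \in \ringint_{\prideal{p}}$ be a common root of $p_1,\dots,p_k$, which exists by joint intersectivity. From $0 = p_i(\zeta) = g(\zeta)h_i(\zeta)$ for all $i$ we get that either $g(\zeta) = 0$ or all $h_i(\zeta) = 0$, the latter contradicting the Bézout identity; hence $g(\zeta) = 0$. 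Thus $g$ has a root in $\ringint_{\prideal{p}}$ for every prime $\prideal{p}$.

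It remains to descend from $g$ to a divisor in $\ringint_L[x]$. When $\ringint_L$ has class number one (in particular when $L = \mathbb{Q}$) one takes $d$ to be a primitive representative of $g$ in $\ringint_L[x]$: Gauss's lemma for the Dedekind domain $\ringint_L$ -- in the form $c(fg) = c(f)c(g)$ -- yields $c(p_i/d) = c(p_i) \subseteq \ringint_L$ and hence $d \mid p_i$ in $\ringint_L[x]$, while $d$ is associate to $g$ in each $L_{\prideal{p}}[x]$, so by the first two paragraphs $d$ is intersective. In general $g$ need not have a primitive representative in $\ringint_L[x]$, and one instead takes $d$ to represent a maximal principal sub-divisor of $\min_i \operatorname{div}(p_i)$ in the Krull domain $\ringint_L[x]$; Gauss's lemma again gives $d \mid p_i$ in $\ringint_L[x]$, and one then checks that $d$ retains a root in $\ringint_{\prideal{p}}$ for every $\prideal{p}$ -- equivalently, that for each $\prideal{p}$ some root of $g$ lying in $\ringint_{\prideal{p}}$ lies on a factor kept in $d$.

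I expect this last verification to be the main obstacle, since it is precisely where $\ringint_L$ departs from $\mathbb{Z}$: the triviality of $\operatorname{Cl}(\mathbb{Z})$ is what makes the primitive gcd available over $\mathbb{Z}$ and lets intersectivity transfer for free, whereas over a field with nontrivial class group one must argue that the principal part of the gcd-divisor of $p_1,\dots,p_k$ already witnesses, at every prime $\prideal{p}$, a root of $g$ in $\ringint_{\prideal{p}}$. Everything else -- the local reduction, the Bézout step, and the content computations -- is routine.
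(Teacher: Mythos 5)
Your overall strategy is the same as the paper's — take the greatest common divisor of $p_1,\dots,p_k$ in $L[x]$, establish that it is intersective via a B\'{e}zout identity, and then worry about representing it in $\ringint_L[x]$ — but you take a noticeably longer road in the middle, and then get stuck at exactly the point the paper glosses over. The paper's proof of intersectivity never goes local: after writing $h_1 p_1 + \cdots + h_k p_k = p$ in $L[x]$ it simply clears denominators to get $f_1 p_1 + \cdots + f_k p_k = dp$ with $f_i \in \ringint_L[x]$ and $d \in \ringint_L \setminus \{0\}$. Then for any nonzero ideal $I$, joint intersectivity gives $\zeta$ with all $p_i(\zeta) \in dI$, hence $dp(\zeta) = \sum f_i(\zeta) p_i(\zeta) \in dI$, hence $p(\zeta) \in I$; so $p$ is intersective, with no mention of completions, Hensel, or compactness. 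Your reformulation of (joint) intersectivity as having a (common) root in each $\ringint_{\prideal{p}}$ is correct and is a pleasant way to think about the definition, and your B\'{e}zout argument in $L_{\prideal{p}}[x]$ showing the common root is a root of $g$ is sound — but it is strictly more machinery than the problem requires, since the global B\'{e}zout identity already carries intersectivity directly.

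Where you do add something is in flagging the descent issue: whether the representative $p \in \ringint_L[x]$ of the $L[x]$-gcd can be chosen so that $p \mid p_i$ holds in $\ringint_L[x]$ rather than merely in $L[x]$. You correctly observe that over $\mathbb{Z}$ (or any PID) one can take the primitive gcd and Gauss's lemma settles it, while over a Dedekind domain with nontrivial class group the naive argument breaks: one needs a principal fractional ideal $(\alpha)$ with $\sum_i c(p_i/g) \subseteq (\alpha) \subseteq c(g)^{-1}$, and such an $(\alpha)$ need not exist a priori. The paper's proof does not address this point at all — it proves intersectivity of $p$ and leaves the divisibility $p \mid p_i$ as implicit in the phrase ``greatest common divisor,'' which really only guarantees divisibility in $L[x]$. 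So your instinct that this is the nontrivial part of adapting \cite[Proposition~6.1]{MR2435427} to $\ringint_L$ is well placed. However, your proposed fix (``maximal principal sub-divisor'') is vague and you explicitly admit you have not verified it; as written, your proposal is incomplete at precisely the step you identify as the main obstacle. In short: your local reformulation is correct but unnecessary; your B\'{e}zout step is correct but duplicates a simpler global argument; and your treatment of the divisibility-over-$\ringint_L$ issue, while more honest than the paper's, does not actually close it.
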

\begin{proof}
Let $p \in \ringint_L[x]$ be the greatest common divisor of $p_1,\dots,p_k$ in $L[x]$.
Then one can find $h_1,\dots,h_k \in L[x]$ such that $h_1 p_1 + \cdots + h_k p_k = p$.
By clearing denominators we obtain $f_1 p_1 + \cdots + f_k h_k = dp$ for polynomials $f_1,\dots,f_k \in \ringint_L[x]$.
Joint intersectivity of $p_1,\dots,p_k$ now implies intersectivity of $dp$ and thus of $p$.
\end{proof}

\begin{example}
Let $K$ be an algebraic number field and fix $c \in \ringint_K$.
Define $f$ in $\ringint_K[x]$ by $f(x) = x ^2 + c$ for all $x \in \ringint_K$.
We show that if $f$ is intersective then $f$ has a root in $\ringint_K$.
The converse is immediate.

Suppose to the contrary that $f$ does not have a root in $\ringint_K$.
Put $L = K(\sqrt{-c})$.
Then $f$ is the minimal polynomial of $\sqrt{-c}$.
Since $f$ is intersective it has a root modulo every prime ideal $\prideal{p}$ in $\ringint_K$.
Thus $f$ is a product of two linear factors in the ring $\ringint_K/\prideal{p}[x]$.
By Kummer's theorem \cite[Page~37]{MR1362545} this implies that $\prideal{p}\ringint_L$ is not prime and therefore factors in $\ringint_L$.
This is a contradiction because one can always find prime ideals in $\ringint_K$ which remain prime when lifted to $\ringint_L$.
Thus $f$ has a root in $\ringint_K$.

For a specific example, consider $f(x) = x^2 + 1$ over $\mathbb{Z}[i]$ and let $T_1$, $T_2$ be commuting, measure-preserving actions of $\mathbb{Z}$ on a probability space $(X,\mathscr{B},\mu)$.
Then $a + ib \mapsto T_1^a T_2^b$ is an action of $\mathbb{Z}[i] = \ringint_{\mathbb{Q}[i]}$ on $(X,\mathscr{B},\mu)$.
Theorem~\ref{thm:singlePolyRec} tells us
\begin{equation*}
\{ u \in \mathbb{Z}[i] : \mu(B \cap T^{p(u)} B) \ge \mu(B)^2 - \epsilon \}
\end{equation*}
is $\aip^*_+$ for any $B \in \mathscr{B}$ and any $\epsilon > 0$.
In terms of $\mathbb{Z}$-actions, we see that
\begin{equation}
\label{eqn:gaussianExample}
\{ (a,b) \in \mathbb{Z}^2 : \mu(B \cap T_1^{a^2 - b^2 + 1} T_2^{2ab} B) \ge \mu(B)^2 - \epsilon \} 
\end{equation}
is $\aip^*_+$ for any $B \in \mathscr{B}$ and any $\epsilon > 0$.

In this case we can actually say more.
By replacing $b$ with $b + 1$ in \eqref{eqn:gaussianExample} we obtain
\[
\{ (a,b) \in \mathbb{Z}^2 : \mu(B \cap T_1^{a^2 - b^2 -2b} T_2^{2ab} B) \ge \mu(B)^2 - \epsilon \} 
\]
and this set is $\ip^*$ by \cite{MR1692634}.
Thus \eqref{eqn:gaussianExample} is $\ip^*_+$.
\end{example}

Note that any non-constant, monic polynomial can be made intersective by passing to an extension in which it has a root.
Our second example is of an intersective polynomial over $\mathbb{Z}[i]$ without a root.
It is based on \cite[Page~3]{MR0195803}.

\begin{example}
Write $L = \mathbb{Q}[i]$ and let $\alpha$ and $\beta$ be primes in $\ringint_L = \mathbb{Z}[i]$ distinct from $1 + i$ such that $\alpha$ is a quadratic residue modulo $(\beta)$ and vice versa.
Assume also that one of $\alpha$, $\beta$ or $\alpha\beta$ is a square modulo $(1 + i)^5$.
Then $f(x) = (x^2 - \alpha)(x^2 - \beta)(x^2 - \alpha\beta)$ in $\ringint_L[x]$ is intersective.

It suffices to prove that $f$ has a root modulo every non-zero ideal in $\ringint_L$.
Since every non-zero, proper ideal in $\ringint_L$ factors a product of powers of prime ideals, the Chinese remainder theorem implies that it suffices to prove $f$ has a root modulo $\mathfrak{p}^n$ for every prime ideal $\mathfrak{p}$ in $\ringint_L$ and every $n \in \mathbb{N}$.

If $\mathfrak{p} = (z)$ for some prime $z$ distinct from $\alpha$, $\beta$ and $1 + i$ then quadratic reciprocity in $\mathbb{Z}[i]$ implies that one of the factors of $f$ has a root modulo $\mathfrak{p}$.
Since the root is non-zero in $\ringint_L/\mathfrak{p}$ Hensel's lemma \cite[Page~105]{MR1362545} implies that the same factor has a root modulo every power of $\mathfrak{p}$.

The same argument shows that $f$ has a root modulo $\mathfrak{p}^n$ when $\mathfrak{p} \in \{ (\alpha), (\beta) \}$ by our assumption that $\alpha$ is a residue modulo $(\beta)$ and vice versa.

Lastly, if $\mathfrak{p} = (1 + i)$ then one of the factors $h$ of $f$ has a root modulo $(1 + i)^n$ for $n \le 5$ by assumption.
Suppose now that $w$ is a root of this factor modulo $(1+i)^5$ for some $n \ge 5$.
Thus $(1 + i)^n$ divides $h(w)$.
If $(1 + i)^{n+1}$ divides $h(w)$ then certainly $h$ has a root modulo $(1 + i)^{n+1}$.
Otherwise $(1+i)^{n+1}$ does not divide $h(w)$ so
\[
h(w + (1+i)^{n-2}) = h(w) - 2 (1+i)^{n-2} h(0) + (1+i)^{2n-4}
\]
is divisible by $(1 + i)^{n+1}$ because $n \ge 5$.
\end{example}

\section{Gowers-Host-Kra norms for commuting actions}
\label{sec:gowersNorms}

In this section we recall the construction of Gowers-Host-Kra seminorms for a $\mathbb{Z}^m$-system $\mathbf{X} = (X,\mathscr{B},\mu,T)$, which is totally analogous to the $m = 1$ case given in \cite{MR2150389}.
See \cite[Section~4.3.6]{griesmerThesis} for more on these seminorms.

One defines inductively a sequence $\mathbf{X}^{[k]}$ of systems as follows.
Put $\mathbf{X}^{[0]} = \mathbf{X}$.
Assuming that $\mathbf{X}^{[k]} = (X^{[k]},\mathscr{B}^{[k]},\mu^{[k]},T_{[k]})$ has been defined, put
\begin{equation*}
X^{[k+1]} = X^{[k]} \times X^{[k]} \qquad \mathscr{B}^{[k+1]} = \mathscr{B}^{[k]} \otimes \mathscr{B}^{[k]} \qquad T_{[k+1]} = T_{[k]} \times T_{[k]}
\end{equation*}
and define $\mu^{[k+1]}$ to be the relatively independent self-joining of $\mu^{[k]}$ over the sub-$\sigma$-algebra $\mathscr{I}_{[k]} \subset \mathscr{B}^{[k]}$ of sets invariant under $T_{[k]}$.
Thus for any $F_0,F_1$ in $\lp^\infty(\mathbf{X}^{[k]})$ we have
\begin{equation*}
\int F_0 \otimes F_1 \intd\mu^{[k+1]}
=
\int \condex{F_0}{\mathscr{I}_{[k]}} \cdot \condex{F_1}{\mathscr{I}_{[k]}} \intd\mu^{[k]}
=
\clim_{n \to \Phi} \int F_0 \cdot T_{[k]}^n F_1 \intd\mu^{[k]}
\end{equation*}
for any F\o{}lner sequence $\Phi$ in $\mathbb{Z}^m$.
For example
\begin{equation*}
\mathbf{X}^{[1]} = (X \times X, \mathscr{B} \otimes \mathscr{B}, T \times T, \mu \otimes_{\mathscr{I}_{[0]}} \mu)
\end{equation*}
where $\mathscr{I}_{[0]}$ is the sub-$\sigma$-algebra of $T$-invariant sets.
In particular $\mu^{[1]} = \mu \otimes \mu$ if $T$ is ergodic.

Given $f$ in $\lp^\infty(\mathbf{X})$ write $f^{[k]}$ for the function
\begin{equation*}
f \otimes \cdots \otimes f = f \circ \pi_1 \cdots f \circ \pi_{2^k}
\end{equation*}
in $\lp^\infty(\mathbf{X}^{[k]})$, where $\pi_1,\dots,\pi_{2^k}$ are the coordinate projections $X^{[k]} \to X$.
For each $k \ge 1$ the $k$th \define{Gowers-Host-Kra seminorm} $\gnbar \cdot \gnbar_k$ on $\lp^\infty(\mathbf{X})$ is defined by
\begin{equation*}
\gnbar f \gnbar_k^{2^k} = \int f^{[k]} \intd\mu^{[k]}
\end{equation*}
for all $f$ in $\lp^\infty(\mathbf{X})$, and $\gnbar f \gnbar_0 = \int f \intd\mu$.
Note that
\[
\gnbar f \gnbar_1^2 = \int f \otimes f \intd\mu^{[1]} =  \int \condex{f}{\mathscr{I}_{[0]}} \cdot \condex{f}{\mathscr{I}_{[0]}} \intd\mu^{[0]}
\]
for all $f$ in $\lp^\infty(\mathbf{X})$ so
\begin{equation}
\label{eqn:ghkInc}
\gnbar f \gnbar_0 \le \gnbar f \gnbar_1
\end{equation}
by Cauchy-Schwarz.
When $k \ge 1$ we have
\begin{equation*}
\gnbar f \gnbar_k^{2^k} = \int \condex{f^{[k-1]}}{\mathscr{I}_{[k-1]}} \cdot \condex{f^{[k-1]}}{\mathscr{I}_{[k-1]}} \intd\mu^{[k-1]}
\end{equation*}
for all $f$ in $\lp^\infty(\mathbf{X})$.
For any $k \ge 0$ and any F\o{}lner sequence $\Phi$ in $\mathbb{Z}^m$ we have
\begin{equation}
\label{eqn:gowInductive}
\begin{aligned}
\clim_{u \to \Phi} \gnbar f \cdot T^u f \gnbar_k^{2^k}
&
=
\clim_{u \to \Phi} \int f^{[k]} \cdot T_{[k]}^u f^{[k]} \intd\mu^{[k]}
\\
&
=
\int \condex{f^{[k]}}{\mathscr{I}_{[k]}} \cdot \condex{f^{[k]}}{\mathscr{I}_{[k]}} \intd\mu^{[k]}
=
\gnbar f \gnbar_{k+1}^{2^{k+1}}
\end{aligned}
\end{equation}
for all $f$ in $\lp^\infty(\mathbf{X})$ by the mean ergodic theorem.

The key feature of the seminorms $\gnbar \cdot \gnbar_k$ is that, for ergodic $\mathbb{Z}^m$-systems their kernels are determined by $T$-invariant sub-$\sigma$-algebras $\mathscr{Z}_k$ of $\mathscr{B}$ that have a strong algebraic structure.
This was proved for $m = 1$ by Host and Kra~\cite{MR2150389} and generalized to arbitrary $m$ by Griesmer as follows.

\begin{theorem}[\cite{griesmerThesis}]
\label{thm:GriesmerFactors}
Let $\mathbf{X} = (X,\mathscr{B},\mu,T)$ be an ergodic $\mathbb{Z}^m$-system.
For each $k \in \mathbb{N}$ there is an invariant sub-$\sigma$-algebra $\mathscr{Z}_k$ of $\mathscr{B}$ with the property that $\gnbar f \gnbar_k = 0$ if and only if $\condex{f}{\mathscr{Z}_k} = 0$.
Moreover, the factor corresponding to $\mathscr{Z}_k$ is an inverse limit of of a sequence of nilrotations of nilpotency degree at most $r$.
\end{theorem}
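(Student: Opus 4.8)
The plan is to transcribe the Host--Kra structure theory \cite{MR2150389} from single transformations to actions of $\mathbb{Z}^m$; this is exactly what is carried out in \cite{griesmerThesis}, so I will describe the main steps and indicate where the case $m>1$ requires extra care.

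\emph{Step 1: the seminorm property.} First I would verify that each $\gnbar\cdot\gnbar_k$ really is a seminorm on $\lp^\infty(\mathbf{X})$ and that $\gnbar f\gnbar_k\le\gnbar f\gnbar_{k+1}$ for every $f$. Both are proved by induction on $k$, using the identity $\gnbar f\gnbar_{k+1}^{2^{k+1}}=\clim_{u\to\Phi}\gnbar f\cdot T^uf\gnbar_k^{2^k}$ from \eqref{eqn:gowInductive} together with a Cauchy--Schwarz--Gowers inequality for the cubic measures $\mu^{[k]}$. Subadditivity of $\gnbar\cdot\gnbar_{k+1}$ is where the relative independence of $\mu^{[k+1]}$ over $\mathscr{I}_{[k]}$ is used: it lets one write $\gnbar f\gnbar_{k+1}^{2^{k+1}}$ as an inner product of $\condex{f^{[k]}}{\mathscr{I}_{[k]}}$ with itself, after which the usual cube Cauchy--Schwarz applies. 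Nothing in this part uses that the acting group is $\mathbb{Z}$ rather than $\mathbb{Z}^m$, so it is a verbatim copy of \cite[Section~3]{MR2150389}.

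\emph{Step 2: construction of $\mathscr{Z}_k$.} Next I would show that $\{f\in\lp^\infty(\mathbf{X}):\gnbar f\gnbar_k=0\}$ has $\lp^2$-closure equal to $\{f\in\lp^2(\mathbf{X}):\condex{f}{\mathscr{Z}_k}=0\}$ for a unique sub-$\sigma$-algebra $\mathscr{Z}_k$. The point is that this set is a closed subspace stable under multiplication by bounded functions, hence comes from a factor; that $\gnbar\cdot\gnbar_k$ is $T$-invariant forces $\mathscr{Z}_k$ to be $T$-invariant, and $\gnbar f\gnbar_k\le\gnbar f\gnbar_{k+1}$ gives the nesting $\mathscr{Z}_k\subset\mathscr{Z}_{k+1}$. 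This yields the first assertion of the theorem. At the bottom of the tower one checks directly that $\mathscr{Z}_1$ is the trivial factor and, via \eqref{eqn:gowInductive} and the mean ergodic theorem, that $\mathscr{Z}_2$ is the Kronecker factor, which by \cite[Theorem~1]{MR0172961} is a rotation on a compact abelian group and hence a nilrotation of nilpotency degree $1$.

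\emph{Step 3 (the main obstacle): $\mathscr{Z}_k$ is a pro-nilsystem of bounded degree.} The real content is an induction on $k$ showing that the factor attached to $\mathscr{Z}_{k+1}$ is an isometric extension of the factor attached to $\mathscr{Z}_k$ by a compact abelian group, whose defining cocycle --- after correction by a coboundary --- satisfies a Conze--Lesigne-type functional equation, obtained by forcing a suitable discrete derivative of the cocycle around the $(k{+}1)$-dimensional cube to be a coboundary. One then invokes the Host--Kra classification: a tower of abelian group extensions whose successive cocycles are of this restricted type is an inverse limit of nilsystems whose nilpotency degree is controlled by the height of the tower. Since $k$ is fixed here, the nilrotations obtained have nilpotency degree at most some constant $r=r(k)$, which gives the final assertion. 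The genuinely new ingredient over the $m=1$ case is bookkeeping: one must track how each of the $m$ commuting generators acts on the cube systems $\mathbf{X}^{[k]}$ and on the successive group extensions, and check that the commutation relations are compatible with the cube equations. I expect this step to require citing \cite{griesmerThesis} for the lengthy but essentially routine verification that the cube constructions, the descent of cocycle equations, and the ensuing group-theoretic classification go through for commuting transformations exactly as in \cite{MR2150389}.
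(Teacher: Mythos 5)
Your proposal is correct and takes the same route as the paper, whose entire proof is the single citation ``This is a combination of Lemma~4.4.3 and Theorem~4.10.1 in \cite{griesmerThesis}''; you supply in addition a faithful sketch of the Host--Kra strategy that Griesmer transports to $\mathbb{Z}^m$-actions, and you correctly end by deferring the Conze--Lesigne/cube-cocycle verification to that source. The only small imprecision is in Step~2: a rotation on a general compact abelian group is not itself a nilrotation (the group need not be a Lie group) but is an inverse limit of degree-$1$ nilrotations, which is in any case all the theorem asserts.
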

\begin{proof}
This is a combination of Lemma~4.4.3 and Theorem~4.10.1 in \cite{griesmerThesis}.
\end{proof}

Using Theorem~\ref{thm:prodErgDecomp} we can relate the Gowers-Host-Kra seminorms of an ergodic $\mathbb{Z}^m$-system $(X,\mathscr{B},\mu,T)$ to those of the systems $(X^2,\mathscr{B}^2,T \times T, \mu_s)$ where $\mu_s$ is the ergodic decomposition of $T \times T$.
Write $\mu_s^{[k]}$ for $(\mu_s)^{[k]}$ and $\gnbar \cdot \gnbar_{s,k}$ for the $k$th Gowers-Host-Kra seminorm of the system $(X^2,\mathscr{B}^2,T \times T, \mu_s)$.

\begin{proposition}
\label{prop:gnormForProduct}
Let $T$ be an ergodic, measure-preserving action of $\mathbb{Z}^m$ on a compact metric probability space $(X,\mathscr{B},\mu)$ and let $\mu_s$ be the ergodic decomposition of $T \times T$.
Then
\begin{equation}
\label{eqn:hkErgodicDecomp}
\mu^{[k+1]} = \int \mu_s^{[k]} \intd\haar(s)
\end{equation}
for every $k \ge 0$ and
\begin{equation*}
\gnbar f \gnbar_{k+1}^{2^{k+1}} = \int \gnbar f \otimes f \gnbar_{s,k}^{2^k} \intd\haar(s)
\end{equation*}
for every $f$ in $\lp^\infty(\mathbf{X})$.
\end{proposition}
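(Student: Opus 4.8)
The plan is to prove the measure identity \eqref{eqn:hkErgodicDecomp} by induction on $k$ and then obtain the seminorm identity by integrating the tensor power $f^{[k+1]}$ against both sides. Throughout I would use the canonical coordinate-permuting isomorphism identifying the $k$-th level $(X^2)^{[k]}$ of the product system $\mathbf{X} \times \mathbf{X} = (X^2,\mathscr{B}^2,\mu\otimes\mu,T\times T)$ with $X^{[k+1]}$: under it $(T\times T)_{[k]}$ corresponds to $T_{[k+1]}$, the $\sigma$-algebra $(\mathscr{B}^2)^{[k]}$ to $\mathscr{B}^{[k+1]}$, and the function $(f\otimes f)^{[k]}$ to $f^{[k+1]}$. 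In particular each $\mu_s^{[k]} = (\mu_s)^{[k]}$ may be regarded as a measure on $(X^{[k+1]},\mathscr{B}^{[k+1]})$, and $\gnbar f\otimes f\gnbar_{s,k}^{2^k} = \int (f\otimes f)^{[k]} \intd\mu_s^{[k]} = \int f^{[k+1]}\intd\mu_s^{[k]}$.

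The base case $k = 0$ is immediate: $\mu^{[1]} = \mu\otimes\mu$ because $T$ is ergodic, $\mu_s^{[0]} = \mu_s$, and $\int\mu_s\intd\haar(s) = \mu\otimes\mu$ is the content of Theorem~\ref{thm:prodErgDecomp}. For the inductive step I would fix $G_0,G_1$ in $\lp^\infty(\mathbf{X}^{[k+1]})$ and a F\o{}lner sequence $\Phi$ in $\mathbb{Z}^m$, and use the $\clim$-formula for a relatively independent self-joining together with the induction hypothesis (applied to the bounded function $G_0\cdot T_{[k+1]}^n G_1$ on $X^{[k+1]}$) to write
\begin{equation*}
\int G_0\otimes G_1\intd\mu^{[k+2]}
= \clim_{n\to\Phi}\int G_0\cdot T_{[k+1]}^n G_1\intd\mu^{[k+1]}
= \clim_{n\to\Phi}\int\bigg(\int G_0\cdot T_{[k+1]}^n G_1\intd\mu_s^{[k]}\bigg)\intd\haar(s).
\end{equation*}
For $\haar$-a.e.\ $s$ the mean ergodic theorem along $\Phi$ for the $\mathbb{Z}^m$-system $(X^{[k+1]},\mathscr{B}^{[k+1]},\mu_s^{[k]},T_{[k+1]})$ shows the inner Ces\`{a}ro averages converge to $\int G_0\otimes G_1\intd\mu_s^{[k+1]}$. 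Since these averages are bounded independently of $s$ and $N$, and $s\mapsto\mu_s^{[k]}$ is measurable, dominated convergence lets me pull $\clim_{n\to\Phi}$ inside the integral over $s$, giving $\int G_0\otimes G_1\intd\mu^{[k+2]} = \int \big(\int G_0\otimes G_1\intd\mu_s^{[k+1]}\big)\intd\haar(s)$; as $G_0,G_1$ were arbitrary this closes the induction. The required measurability of $s\mapsto\mu_s^{[k]}$ is itself proved by induction: it holds for $k=0$ by Theorem~\ref{thm:prodErgDecomp}, and at each stage the same $\clim$-formula exhibits $s\mapsto\int G_0\otimes G_1\intd\mu_s^{[k+1]}$ as a pointwise limit of measurable functions of $s$.

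Finally, integrating the bounded function $f^{[k+1]}$ against \eqref{eqn:hkErgodicDecomp} and using the identification above yields
\begin{equation*}
\gnbar f\gnbar_{k+1}^{2^{k+1}} = \int f^{[k+1]}\intd\mu^{[k+1]} = \int\bigg(\int f^{[k+1]}\intd\mu_s^{[k]}\bigg)\intd\haar(s) = \int\gnbar f\otimes f\gnbar_{s,k}^{2^k}\intd\haar(s),
\end{equation*}
which is the asserted seminorm identity. I expect the only genuine obstacle to be the interchange of the averaging limit $\clim_{n\to\Phi}$ with integration over the ergodic decomposition in the inductive step; everything else is bookkeeping with the tower construction. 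That interchange is precisely where one needs the fibrewise mean ergodic theorem, the uniform bound on the averages, and the measurability of $s\mapsto\mu_s^{[k]}$, rather than a purely formal manipulation.
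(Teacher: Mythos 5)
Your proof is correct, and it takes a genuinely different route from the paper's. Both arguments have the same skeleton: identify $(X^2)^{[k]}$ with $X^{[k+1]}$ via a coordinate permutation, verify the base case $k=0$ via Theorem~\ref{thm:prodErgDecomp}, and then run an induction on the level of the Host--Kra tower. The difference lies in how the inductive step is closed. The paper invokes the \emph{pointwise} ergodic theorem for $\mathbb{Z}^m$-actions along cube F\o{}lner sequences: the ergodic averages $\frac{1}{|\Phi_N|}\sum_{u\in\Phi_N} T_{[k+1]}^u F$ converge $\mu^{[k+1]}$-a.e.\ to $\condex{F}{\mathscr{I}_{[k+1]}}_{\mu^{[k+1]}}$ and, for $\haar$-a.e.\ $s$, $\mu_s^{[k]}$-a.e.\ to $\condex{F}{\mathscr{I}_{[k+1]}}_{\mu_s^{[k]}}$, so the disintegration $\mu^{[k+1]}=\int\mu_s^{[k]}\intd\haar(s)$ forces these two conditional expectations to agree $\mu_s^{[k]}$-a.e.\ for $\haar$-a.e.\ $s$; the desired identity for $\mu^{[k+2]}$ follows directly. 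You instead apply the \emph{mean} ergodic theorem fibrewise, for the system $(X^{[k+1]},\mathscr{B}^{[k+1]},\mu_s^{[k]},T_{[k+1]})$ at each $s$, to identify the inner $\clim$ with $\int G_0\otimes G_1\intd\mu_s^{[k+1]}$, and then pull the $\clim$ through $\intd\haar(s)$ by dominated convergence using the uniform bound $\nbar G_0\nbar_\infty\nbar G_1\nbar_\infty$ and the measurability of $s\mapsto\mu_s^{[k]}$ (which you rightly flag must be established inductively alongside the main identity). Your version buys two things: it replaces the pointwise ergodic theorem with the considerably softer mean ergodic theorem, and it is agnostic to the choice of F\o{}lner sequence, so it would transfer verbatim to actions of an arbitrary countable amenable group. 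The cost is that you must address measurability of $s\mapsto\mu_s^{[k]}$ explicitly, whereas the paper's pointwise identification of conditional expectations sidesteps that bookkeeping. Both proofs are sound; yours is arguably the more elementary and more portable of the two.
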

\begin{proof}
The proof is by induction on $k$.
When $k = 0$ we use ergodicity of $\mu$ and Theorem~\ref{thm:prodErgDecomp} to obtain
\begin{equation*}
\gnbar f \gnbar_1^2 = \int f \otimes f \intd(\mu \otimes \mu) =  \iint f \otimes f \intd\mu_s \intd\haar(s) = \int \gnbar f \otimes f \gnbar_{s,0} \intd\haar(s)
\end{equation*}
for any $f$ in $\lp^\infty(X,\mathscr{B},\mu)$.

Suppose now that \eqref{eqn:hkErgodicDecomp} holds for some $k \ge 0$.
Fix a bounded, measurable function $F : X^{[k+1]} \to \mathbb{R}$.
Write $\Phi_N = \{1,\dots,N\}^m$.
In this proof we will denote the measure with respect to which a conditional expectation is taken using a subscript.

The pointwise ergodic theorem for actions of $\mathbb{Z}^m$ (see \cite[\nopp VIII.6.9]{MR0117523}) tells us that
\begin{equation*}
\lim_{N \to \infty} \frac{1}{|\Phi_N|} \sum_{u \in \Phi_N} T_{[k+1]}^u F = \condex{F}{\mathscr{I}_{[k+1]}}_{\mu^{[k+1]}}
\end{equation*}
almost surely with respect to $\mu^{[k+1]}$.
It also implies that, for $\haar$ almost every $s$, we have
\begin{equation*}
\frac{1}{|\Phi_N|} \sum_{u \in \Phi_N} T_{[k+1]}^u F \to \condex{F}{\mathscr{I}_{[k+1]}}_{\mu_s^{[k]}}
\end{equation*}
almost surely with respect to $\mu^{[k]}_s$.
Thus \eqref{eqn:hkErgodicDecomp} implies that for $\haar$ almost every $s$ we have
\begin{equation*}
\condex{f}{\mathscr{I}_{[k+1]}}_{\mu^{[k+1]}} = \condex{f}{\mathscr{I}_{[k+1]}}_{\mu_s^{[k]}}
\end{equation*}
on a set of full $\mu_s^{[k]}$ measure.
But then
\begin{equation*}
\begin{aligned}
\int F_0 \otimes F_1 \intd\mu^{[k+2]}
&
=
\int \condex{F_0}{\mathscr{I}_{[k+1]}}_{\mu^{[k+1]}} \cdot \condex{F_1}{\mathscr{I}_{[k+1]}}_{\mu^{[k+1]}} \intd\mu^{[k+1]}
\\
&
=
\iint \condex{F_0}{\mathscr{I}_{[k+1]}}_{\mu^{[k+1]}} \cdot \condex{F_1}{\mathscr{I}_{[k+1]}}_{\mu^{[k+1]}} \intd\mu_s^{[k]} \intd\haar(s)
\\
&
=
\iint \condex{F_0}{\mathscr{I}_{[k+1]}}_{\mu_s^{[k]}} \cdot \condex{F_1}{\mathscr{I}_{[k+1]}}_{\mu_s^{[k]}} \intd\mu_s^{[k]} \intd\haar(s)
\\
&
=
\iint F_0 \otimes F_1 \intd\mu_s^{[k+1]} \intd\haar(s)
\end{aligned}
\end{equation*}
for any bounded, measurable functions $F_0,F_1$ on $X^{[k+1]}$ as desired.
\end{proof}

\section{Characteristic factors for some polynomial averages}
\label{sec:characteristicFactors}

In this section we describe characteristic factors for multiparameter correlations of the form
\begin{equation}
\label{eqn:numberFieldPolyAverage}
\int f \cdot T^{p_1(u)} f \cdots T^{p_k(u)} f \intd\mu
\end{equation}
where $T$ is an ergodic action of $\ringint_L$ on a compact metric probability space $(X,\mathscr{B},\mu)$, the function $f$ belongs to $\lp^\infty(X,\mathscr{B},\mu)$ and $p_1,\dots,p_k$ are non-constant polynomials in $\ringint_L[x_1,\dots,x_d]$.
A \define{characteristic factor} for \eqref{eqn:numberFieldPolyAverage} is a $T$ invariant sub-$\sigma$-algebra $\mathscr{C}$ of $\mathscr{B}$ for which
\[
\int f \cdot T^{p_1(u)} f \cdots T^{p_k(u)} f - \condex{f}{\mathscr{C}} \cdot T^{p_1(u)} \condex{f}{\mathscr{C}} \cdots T^{p_k(u)}\condex{f}{\mathscr{C}} \intd\mu \to 0
\]
in $\lp^2(X,\mathscr{B},\mu)$ for every $f \in \lp^\infty(X,\mathscr{B},\mu)$ along some averaging scheme.
We will be concerned with characteristic factors for convergence in density.
Recall that polynomials $p_1,\dots,p_k$ over a ring are said to be \define{essentially distinct} if $p_i - p_j$ is not constant for all $i \ne j$.
Our main goal in this section is the following theorem.

\begin{theorem}
\label{thm:numberFieldFactors}
Let $L$ be an algebraic number field.
Fix polynomials $p_1,\dots,p_k$ in $\ringint_L[x_1,\dots,x_d]$ that are non-constant and essentially distinct.
For any ergodic action $T$ of the additive group of $\ringint_L$ on a compact metric probability space $(X,\mathscr{B},\mu)$ there is $r \in \mathbb{N}$ such that
\begin{equation*}
\dlim_{u \to \Phi} \int f \cdot T^{p_1(u)}f \cdots T^{p_k(u)} f - \condex{f}{\mathscr{Z}_r} \cdot T^{p_1(u)} \condex{f}{\mathscr{Z}_r} \cdots T^{p_k(u)} \condex{f}{\mathscr{Z}_r} \intd\mu = 0
\end{equation*}
for any F\o{}lner sequence $\Phi$ in $\ringint_L$ and any $f_1,\dots,f_k$ in $\lp^\infty(X,\mathscr{B},\mu)$.
\end{theorem}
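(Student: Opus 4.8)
First I would fix a $\mathbb{Z}$-basis of $\ringint_L$, so that $T$ becomes an action of $\mathbb{Z}^m$ and the Gowers--Host--Kra seminorms $\gnbar\cdot\gnbar_k$ and factors $\mathscr{Z}_k$ of Theorem~\ref{thm:GriesmerFactors} are available; these form an increasing sequence, so $\gnbar g\gnbar_r = 0$ entails $\gnbar g\gnbar_{r'} = 0$ for all $r' \le r$. Writing $f = \condex{f}{\mathscr{Z}_r} + f'$ and expanding both products in the statement, a telescoping argument turns their difference into a finite sum of functions $u \mapsto \int g_0 \cdot T^{p_1(u)}g_1 \cdots T^{p_k(u)}g_k \intd\mu$ in which each $g_i$ is one of $f$, $f'$, $\condex{f}{\mathscr{Z}_r}$ and exactly one $g_i$ equals $f'$; since $\mathscr{Z}_r$ is $T$-invariant, Theorem~\ref{thm:GriesmerFactors} gives $\gnbar f'\gnbar_r = 0$. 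By the triangle inequality and Cauchy--Schwarz it then suffices to find an $r$ such that, whenever $g_0,\dots,g_k$ are real-valued functions of sup-norm at most one with $\gnbar g_j\gnbar_r = 0$ for some $j$, one has $\lim_{N \to \infty} \frac{1}{|\Phi_N|} \sum_{u \in \Phi_N} \bigl| \int g_0 \cdot T^{p_1(u)}g_1 \cdots T^{p_k(u)}g_k \intd\mu \bigr|^2 = 0$ along every F\o{}lner sequence $\Phi$ in $\ringint_L^d$. When the distinguished index is $j = 0$ I would first apply $T^{-p_1(u)}$, which rewrites the expression in the same form for the family $-p_1, p_2 - p_1, \dots, p_k - p_1$ -- again non-constant and essentially distinct over $\ringint_L$ -- with $f'$ attached to the non-constant slot $-p_1$; this substitution occurs once, so it may be absorbed into the choice of $r$.

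The engine would be the following polynomial ergodic theorem: for every finite family $P$ of non-constant, essentially distinct polynomials in $\ringint_L[x_1,\dots,x_d]$ there is $r(P)$ such that, for every ergodic action $S$ of $\ringint_L$ on a compact metric probability space, every index $l$, all real-valued $\phi_1,\dots,\phi_k$ of sup-norm at most one, and every F\o{}lner sequence $\Psi$ in $\ringint_L^d$, the vanishing $\gnbar \phi_l\gnbar_{r(P)} = 0$ forces $\lim_{N \to \infty} \nbar \frac{1}{|\Psi_N|} \sum_{u \in \Psi_N} S^{q_1(u)}\phi_1 \cdots S^{q_k(u)}\phi_k \nbar_{\lp^2} = 0$, where $P = \{q_1,\dots,q_k\}$. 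I would prove this by induction on the PET weight of $P$ in the sense of \cite{MR1325795}. In the base case $P$ is a single polynomial $q_1$ of degree one; then $q_1$ induces a non-zero homomorphism $\ringint_L^d \to \ringint_L$ with finite-index image $H$ by Lemma~\ref{lem:groupHomIndex}, the average converges in $\lp^2$ to the projection of $\phi_1$ onto the $S|H$-invariant functions (a mean-ergodic-theorem computation), and Lemma~\ref{lem:invSubgroupEig} identifies those as finite sums of eigenfunctions of $S$, hence $\mathscr{Z}_r$-measurable for $r$ large, so $\gnbar \phi_1\gnbar_r = 0$ kills the projection. For the inductive step I would apply the van der Corput inequality of Proposition~\ref{prop:hilbertVdc} to the $\lp^2$-valued map $u \mapsto S^{q_1(u)}\phi_1 \cdots S^{q_k(u)}\phi_k$; composing the inner products $\langle g(u+h), g(u) \rangle$ with $S^{-q_{l'}(u)}$ for a well-chosen $l' \ne l$ and performing the usual simplifications (merging slots whose exponent polynomials differ by a constant) turns each into an average of the same shape for a family of strictly smaller PET weight, still non-constant and essentially distinct over $\ringint_L$ and carrying $\phi_l$ on a non-constant polynomial, so the induction hypothesis applies for each $h$; using the identity \eqref{eqn:gowInductive} -- applied, where a constant shift $c(h)$ survives, over the coset of a finite-index subgroup that it fills (again finite index by Lemma~\ref{lem:groupHomIndex}), which costs only a harmless constant -- the controlling seminorm index rises by one upon averaging in $h$, and letting the van der Corput parameter grow (the finitely many exceptional $h$, including $h = 0$, contributing $O(1/|\Psi_H|)$) closes the induction. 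The essential point is that differences of polynomials over $\ringint_L$ remain polynomials over $\ringint_L$, so the finite-index input -- which would fail for general polynomial maps $\mathbb{Z}^d \to \mathbb{Z}^m$ -- is available at every level; this is precisely where the hypothesis that $L$ is a number field enters.

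To descend from this theorem, which concerns ergodic systems, to the typically non-ergodic system $X \times X$, I would fix a telescoped term $\int g_0 \cdot T^{p_1(u)}g_1 \cdots T^{p_k(u)}g_k \intd\mu$ with $g_i$ real and $\gnbar g_j\gnbar_r = 0$. Its square is $\int_{X \times X} (g_0 \otimes g_0) \prod_i (T \times T)^{p_i(u)}(g_i \otimes g_i) \intd(\mu \otimes \mu)$, so the Ces\`aro average in question is at most $\nbar \frac{1}{|\Phi_N|} \sum_u \prod_i (T \times T)^{p_i(u)}(g_i \otimes g_i) \nbar_{\lp^2(\mu \otimes \mu)}$. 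Decomposing $\mu \otimes \mu = \int \mu_s \intd\haar(s)$ into ergodic components using Theorem~\ref{thm:prodErgDecomp}, the square of this norm equals $\int \nbar \frac{1}{|\Phi_N|} \sum_u \prod_i (T \times T)^{p_i(u)}(g_i \otimes g_i) \nbar_{\lp^2(\mu_s)}^2 \intd\haar(s)$, whose $\limsup$ is at most $\int \limsup_N ( \cdots ) \intd\haar(s)$ by dominated convergence. Applying the polynomial ergodic theorem to each ergodic system $(X \times X, \mu_s, T \times T)$ with the family $\{p_1,\dots,p_k\}$ and distinguished index $j$, and taking $r$ to be one more than the maximum of $r(\{p_1,\dots,p_k\})$ and the corresponding index for the family produced in the $j = 0$ reduction, Proposition~\ref{prop:gnormForProduct} turns $\gnbar g_j\gnbar_r = 0$ into $\gnbar g_j \otimes g_j\gnbar_{s,r-1} = 0$ for $\haar$-a.e.\ $s$, so the $\limsup$ vanishes for a.e.\ $s$ and the whole integral tends to $0$. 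This gives the required density convergence for each telescoped term and hence the theorem.

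I expect the real work to be the combinatorial bookkeeping of the PET induction -- making the weight precise, checking that each van der Corput step strictly decreases it while keeping the distinguished function on a non-constant polynomial, and verifying that $r(P)$ depends only on the weight -- together with the routine but necessary check that the finite-index input of Lemmas~\ref{lem:groupHomIndex} and~\ref{lem:invSubgroupEig} is inherited by the polynomials produced along the induction. By contrast, the passage to the product system and its ergodic decomposition, resting on Theorem~\ref{thm:prodErgDecomp} and Proposition~\ref{prop:gnormForProduct}, should be straightforward.
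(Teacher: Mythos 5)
Your proposal is correct and follows essentially the same route as the paper: pass to a $\mathbb{Z}^m$-action via a $\mathbb{Z}$-basis so that the Gowers--Host--Kra seminorms and Griesmer's factors apply, establish via a PET induction (whose base cases are the paper's Lemmas~\ref{lem:leibFirstLinear} and~\ref{lem:leibSecondLinear}, using Lemmas~\ref{lem:groupHomIndex} and~\ref{lem:invSubgroupEig} exactly as you describe) that some $\gnbar\cdot\gnbar_r$ controls the Ces\`aro averages, and then upgrade to $\dlim$ convergence by squaring, decomposing $\mu\otimes\mu$ into ergodic components via Theorem~\ref{thm:prodErgDecomp}, and invoking Proposition~\ref{prop:gnormForProduct}. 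You spell out the telescoping decomposition and the $j=0$ composition trick more explicitly than the paper does (the paper compresses them into the statement of Theorem~\ref{thm:leibPoly} and a citation to Leibman's PET argument), but the key ideas and their roles are identical.
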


The remainder of this section constitutes a proof of Theorem~\ref{thm:numberFieldFactors}.
Essentially, we follow Leibman's proof \cite{MR2151605} of convergence of averages of the form \eqref{eqn:numberFieldPolyAverage} for $\mathbb{Z}$-actions to show that the limiting behavior of \eqref{eqn:numberFieldPolyAverage} along any F\o{}lner sequence is controlled by a certain Gowers-Host-Kra seminorm, and then apply Theorem~\ref{thm:GriesmerFactors}.
For this reason we prove only the results that require some modification for our setting.
We then use Proposition~\ref{prop:gnormForProduct} to obtain characteristic factors for $\dlim$ convergence from those obtained for $\clim$ convergence.

We begin with the following lemma.

\begin{lemma}
\label{lem:leibFirstLinear}
Let $p \in \ringint_L[x_1,\dots,x_d]$ be a degree 1 polynomial with zero constant term.
There is a constant $c \ge 0$ such that
\begin{equation}
\label{eqn:leibFirstLinear}
\lim_{N \to \infty} \bnbar \frac{1}{|\Phi_N|} \sum_{u \in \Phi_N} T^{p(u)} f \bnbar \le c \gnbar f \gnbar_2
\end{equation}
for any $f$ in $\lp^\infty(\mathbf{X})$ and any F\o{}lner sequence $\Phi$ in $\ringint_L^d$.
\end{lemma}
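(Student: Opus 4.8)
The plan is to reduce the estimate to a Gowers--Host--Kra seminorm bound via a single application of the van der Corput trick (Proposition~\ref{prop:hilbertVdc}) in the Hilbert space $\lp^2(\mathbf{X})$, exactly as in the proof of the linear case of the multiple ergodic theorem. Write $p(u) = a_1 u_1 + \cdots + a_d u_d$ with the $a_i \in \ringint_L$, and set $g(u) = T^{p(u)} f \in \lp^2(\mathbf{X})$, a bounded map. By Proposition~\ref{prop:hilbertVdc}, for any F\o{}lner sequence $\Phi$ in $\ringint_L^d$ and any $H \in \mathbb{N}$,
\begin{equation*}
\limsup_{N \to \infty} \bnbar \frac{1}{|\Phi_N|} \sum_{u \in \Phi_N} T^{p(u)} f \bnbar^2 \le \frac{1}{|\Phi_H|} \sum_{h \in \Phi_H} \limsup_{N \to \infty} \frac{1}{|\Phi_N|} \sum_{u \in \Phi_N} \langle T^{p(u+h)} f, T^{p(u)} f \rangle,
\end{equation*}
and since $p$ is linear with zero constant term, $p(u+h) = p(u) + p(h)$, so the inner product equals $\langle T^{p(h)} f \cdot \overline{f}, \text{something}\rangle$ after applying $T^{-p(u)}$; more precisely $\langle T^{p(u+h)} f, T^{p(u)} f\rangle = \int T^{p(h)} f \cdot \overline{f} \circ T^{-p(u)}\,$\dots\ the cleaner way is to write it as $\int T^{p(u)}\big(T^{p(h)} f \cdot \overline{f}\big)\intd\mu$, whose average over $u \in \Phi_N$ converges, by the mean ergodic theorem along the F\o{}lner sequence $\Phi$, to $\int \condex{T^{p(h)} f \cdot \overline{f}}{\mathscr{I}_{[0]}}\intd\mu = \int T^{p(h)} f \cdot \overline{f}\intd\mu$ by ergodicity.

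The next step is to bound the resulting average over $h$. We obtain
\begin{equation*}
\limsup_{N \to \infty} \bnbar \frac{1}{|\Phi_N|} \sum_{u \in \Phi_N} T^{p(u)} f \bnbar^2 \le \frac{1}{|\Phi_H|} \sum_{h \in \Phi_H} \int T^{p(h)} f \cdot \overline{f}\intd\mu.
\end{equation*}
Now $h \mapsto p(h)$ is a homomorphism $\ringint_L^d \to \ringint_L$, so its image $R = p(\ringint_L^d)$ is a subgroup of $\ringint_L$; by Lemma~\ref{lem:groupHomIndex} it is either trivial (impossible since $p \ne 0$, so some $a_i \ne 0$) or finite index. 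Taking $H \to \infty$ along a F\o{}lner sequence, the averages $\frac{1}{|\Phi_H|}\sum_{h \in \Phi_H} T^{p(h)} (f \cdot \overline{f}\,)$\dots\ here one must be slightly careful: $\Phi_H$ is a F\o{}lner sequence in $\ringint_L^d$, and $h \mapsto p(h)$ pushes it to an averaging scheme on $R \subset \ringint_L$. Using the mean ergodic theorem for the action of $R$ (a finite-index subgroup, hence cocompact), the limit of $\frac{1}{|\Phi_H|}\sum_{h} T^{p(h)} f \cdot \overline{f}$ is $\condex{f}{\mathscr{I}_R} \cdot \overline{f}$ where $\mathscr{I}_R$ is the $\sigma$-algebra of $T|R$-invariant sets; integrating against $\mu$ gives $\int |\condex{f}{\mathscr{I}_R}|^2\intd\mu$. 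By Lemma~\ref{lem:invSubgroupEig}, $\condex{f}{\mathscr{I}_R}$ lies in the closed span of eigenfunctions of $T$, i.e.\ in the Kronecker factor $\mathscr{Z}_1$, so $\int|\condex{f}{\mathscr{I}_R}|^2\intd\mu \le \nbar \condex{f}{\mathscr{Z}_1}\nbar_2^2$. Finally, on the Kronecker factor the $\lp^2$ norm and $\gnbar\cdot\gnbar_2$ are comparable: by Theorem~\ref{thm:GriesmerFactors} (or directly from the Host--Kra structure theory, $\mathscr{Z}_1 \subset \mathscr{Z}_2$ and $\gnbar\cdot\gnbar_2$ is a genuine norm on $\lp^2(\mathscr{Z}_2)$), one has $\nbar \condex{f}{\mathscr{Z}_1}\nbar_2 \le c\,\gnbar \condex{f}{\mathscr{Z}_2}\gnbar_2 = c\,\gnbar f\gnbar_2$ for some constant $c$; combined with the square root this yields \eqref{eqn:leibFirstLinear}.

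The main obstacle I anticipate is the bookkeeping in the van der Corput step with $\Phi$ being a general F\o{}lner sequence in $\ringint_L^d$ rather than a nested sequence of boxes: one must check that the inner limit $\clim_{u\to\Phi}\langle T^{p(u+h)}f, T^{p(u)}f\rangle$ genuinely exists (it does, by the mean ergodic theorem applied to the function $T^{p(h)}f\cdot\overline{f}$, since $\clim_{u\to\Phi} T^{p(u)}\varphi = \condex{\varphi}{\mathscr{I}_{[0]}}$ for any $\varphi \in \lp^2$, the average being over a push-forward of $\Phi$ to $R$ or, more simply, over $\Phi$ itself composed with the $\ringint_L$-action), and likewise that the outer average over $h \in \Phi_H$ converges as $H \to \infty$ along a F\o{}lner sequence. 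A secondary subtlety is making the final comparison $\nbar\condex{f}{\mathscr{Z}_1}\nbar_2 \lesssim \gnbar f\gnbar_2$ rigorous; this is where one invokes that the Kronecker factor is a factor of the degree-two Host--Kra factor and that $\gnbar\cdot\gnbar_2$ dominates the $\lp^2$ norm on any finite-dimensional (hence on the whole) piece of $\mathscr{Z}_2$ up to a constant — alternatively one can absorb everything into a cleaner statement by noting $\nbar\condex{f}{\mathscr{Z}_1}\nbar_2 \le \gnbar f \gnbar_2$ outright when the seminorms are suitably normalized, taking $c=1$.
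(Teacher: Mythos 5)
Your argument correctly reaches the identity $\lim_N \bnbar |\Phi_N|^{-1}\sum_{u\in\Phi_N} T^{p(u)} f\bnbar^2 = \nbar\condex{f}{\mathscr{I}_R}\nbar^2$, though the van der Corput detour is unnecessary: since $\langle T^{p(u+h)}f, T^{p(u)}f\rangle = \langle T^{p(h)}f, f\rangle$ does not depend on $u$, and the mean ergodic theorem applied to the action $u \mapsto T^{p(u)}$ gives the identity in one step, as the paper does. (Also note that ergodicity is neither assumed in the lemma nor needed, so the appeal to ergodicity in your step giving $\int T^{p(h)}f\cdot\overline{f}\intd\mu$ should be dropped.) Up to this point the proof is fine, if circuitous.

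The genuine gap is the last step. After noting via Lemma~\ref{lem:invSubgroupEig} that $\condex{f}{\mathscr{I}_R}$ is $\mathscr{Z}_1$-measurable, you discard the finite-dimensionality of $\mathscr{I}_R$ by passing to the full Kronecker projection and then assert $\nbar\condex{f}{\mathscr{Z}_1}\nbar_2 \le c\,\gnbar f\gnbar_2$. That inequality is \emph{false}: on the Kronecker factor of an ergodic $\mathbb{Z}$-rotation, $\gnbar f\gnbar_2^4$ is the $\ell^4$-norm (to the fourth power) of the Fourier coefficients of $\condex{f}{\mathscr{Z}_1}$, while $\nbar\condex{f}{\mathscr{Z}_1}\nbar_2^2$ is the $\ell^2$-norm squared, and $\ell^4 \hookrightarrow \ell^2$ has no uniform bound. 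The claim that ``$\gnbar\cdot\gnbar_2$ dominates the $\lp^2$ norm on any finite-dimensional (hence on the whole) piece of $\mathscr{Z}_2$ up to a constant'' is precisely where this breaks: the equivalence constant on a finite-dimensional subspace grows with the dimension, so it does not pass to the whole factor. The finite index $[\ringint_L:\mathfrak{a}]$ is exactly that dimension constant, and the paper's proof keeps track of it instead of throwing it away. Concretely, the paper bounds $\nbar\condex{f}{\mathscr{I}_\mathfrak{a}}\nbar^2$ directly by restricting the Ces\`aro average of $\gnbar f\cdot T^u f\gnbar_1$ to $u \in \mathfrak{a}$: using Lemma~\ref{lem:finiteIndexDensity} one pays a factor $[\ringint_L:\mathfrak{a}]$ to pass from the average over $\mathfrak{a}$ to the average over all of $\ringint_L$, then applies \eqref{eqn:ghkInc}, Cauchy--Schwarz, and \eqref{eqn:gowInductive} to obtain $\gnbar f\gnbar_2^2$. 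This yields $c = [\ringint_L:\mathfrak{a}]^{1/2}$. To repair your proof, stop at $\nbar\condex{f}{\mathscr{I}_R}\nbar^2$ and run that chain rather than enlarging to $\mathscr{Z}_1$.
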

\begin{proof}
Write $p(x_1,\dots,x_d) = a_1 x_1 + \cdots + a_d x_d$ for some $a_i$ in $\ringint_L$, not all of which are zero.
By the mean ergodic theorem we have
\begin{equation}
\label{eqn:metForLinear}
\lim_{N \to \infty} \bnbar \frac{1}{|\Phi_N|} \sum_{u \in \Phi_N} T^{p(u)} f \bnbar^2 = \nbar \condex{f}{\mathscr{I}_\mathfrak{a}} \nbar^2
\end{equation}
where $\mathscr{I}_\mathfrak{a}$ is the sub-$\sigma$-algebra of sets invariant under $T^a$ for all $a$ in the ideal $\mathfrak{a}$ generated by $\{ a_1,\dots,a_d \}$.
By Lemma~\ref{lem:groupHomIndex} the ideal $\mathfrak{a}$ is a finite-index subgroup.
Thus
\begin{equation*}
\begin{aligned}
\lim_{N \to \infty} \frac{[\ringint_L:\mathfrak{a}]}{|\Phi_N|} \sum_{u \in \Phi_N} \gnbar f \cdot T^u f \gnbar_1
&
\ge
\lim_{N \to \infty} \frac{1}{|\Phi_N \cap \mathfrak{a}|} \sum_{u \in \Phi_N \cap \mathfrak{a}}  \gnbar f \cdot T^u f \gnbar_1
\\
&
\ge
\lim_{N \to \infty} \frac{1}{|\Phi_N \cap \mathfrak{a}|} \sum_{u \in \Phi_N \cap \mathfrak{a}}  \gnbar f \cdot T^u f \gnbar_0
=
\nbar \condex{f}{\mathscr{I}_\mathfrak{a}} \nbar^2
\end{aligned}
\end{equation*}
for any $f$ in $\lp^\infty(\mathbf{X})$ by Lemma~\ref{lem:finiteIndexDensity}, \eqref{eqn:ghkInc} and the mean ergodic theorem.
Combining the above with \eqref{eqn:metForLinear} and Cauchy-Schwarz gives us
\begin{equation*}
\lim_{N \to \infty} \bnbar \frac{1}{|\Phi_N|} \sum_{u \in \Phi_N} T^{p(u)} f \bnbar^2 \le [\ringint_L:\mathfrak{a}] \left( \lim_{N \to \infty} \frac{1}{|\Phi_N|} \sum_{u \in \Phi_N} \nbar f \cdot T^u f \nbar_1^2 \right)^{1/2}
\end{equation*}
which, upon applying \eqref{eqn:gowInductive}, yields \eqref{eqn:leibFirstLinear} with $c^2 = [\ringint_L:\mathfrak{a}]$.
\end{proof}

\begin{lemma}
\label{lem:leibSecondLinear}
Let $p \in \ringint_L[x_1,\dots,x_d]$ be a degree 1 polynomial with zero constant term.
There is a constant $c \ge 0$ such that
\begin{equation}
\label{eqn:leibSecondLinear}
\lim_{N \to \infty} \frac{1}{|\Phi_N|} \sum_{u \in \Phi_N} \gnbar f \cdot T^{p(u)} f \gnbar_k^{2^k} \le c \gnbar f \gnbar_k^{2^{k+1}}
\end{equation}
for every $f$ in $\lp^\infty(\mathbf{X})$, every F\o{}lner sequence $\Phi$ in $\ringint_L^d$ and every $k$ in $\mathbb{N}$.
\end{lemma}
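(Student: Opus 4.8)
The plan is to mimic the proof of Lemma~\ref{lem:leibFirstLinear} and reduce everything to the mean ergodic theorem applied to a product system. First I would write $p(x_1,\dots,x_d) = a_1 x_1 + \cdots + a_d x_d$ with the $a_i$ in $\ringint_L$ not all zero, let $\mathfrak{a}$ be the ideal of $\ringint_L$ generated by $a_1,\dots,a_d$, so that the image $p(\ringint_L^d)$ is exactly $\mathfrak{a}$, and recall from Lemma~\ref{lem:groupHomIndex} that $m := [\ringint_L : \mathfrak{a}]$ is finite. One may assume $f$ is real-valued, which is all that is used later. Using the identity $(f \cdot T^{p(u)} f)^{[k]} = f^{[k]} \cdot T_{[k]}^{p(u)} f^{[k]}$ one has $\gnbar f \cdot T^{p(u)} f \gnbar_k^{2^k} = \langle f^{[k]}, T_{[k]}^{p(u)} f^{[k]} \rangle$ in $\lp^2(\mu^{[k]})$. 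Since $p$ is a group homomorphism, $u \mapsto T_{[k]}^{p(u)}$ is a measure-preserving action of $\ringint_L^d$ on $\mathbf{X}^{[k]}$ whose invariant vectors are exactly those fixed by $T_{[k]}^a$ for every $a \in \mathfrak{a}$. Hence the mean ergodic theorem along any F\o{}lner sequence $\Phi$ in $\ringint_L^d$ gives that the averages $|\Phi_N|^{-1} \sum_{u \in \Phi_N} T_{[k]}^{p(u)} f^{[k]}$ converge in $\lp^2(\mu^{[k]})$ to $\condex{f^{[k]}}{\mathscr{J}}$, where $\mathscr{J} \subset \mathscr{B}^{[k]}$ is the sub-$\sigma$-algebra of sets invariant under $T_{[k]}^a$ for all $a \in \mathfrak{a}$. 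Pairing with $f^{[k]}$ identifies the left-hand side of \eqref{eqn:leibSecondLinear} as $\nbar \condex{f^{[k]}}{\mathscr{J}} \nbar^2$; in particular the limit exists and is independent of $\Phi$.

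The remaining task is to bound $\nbar \condex{f^{[k]}}{\mathscr{J}} \nbar^2$ by a constant multiple of $\gnbar f \gnbar_{k+1}^{2^{k+1}}$. For this I would fix a F\o{}lner sequence $\Psi$ in $\ringint_L$ and observe, using Lemma~\ref{lem:finiteIndexDensity}, that for all large $N$ the sets $\Psi_N \cap \mathfrak{a}$ form a F\o{}lner sequence in $\mathfrak{a}$ along which $|\Psi_N| / |\Psi_N \cap \mathfrak{a}| \to m$. Applying the mean ergodic theorem to the restriction of $T_{[k]}$ to $\mathfrak{a}$, exactly as in the first paragraph, gives $\nbar \condex{f^{[k]}}{\mathscr{J}} \nbar^2 = \lim_N |\Psi_N \cap \mathfrak{a}|^{-1} \sum_{v \in \Psi_N \cap \mathfrak{a}} \gnbar f \cdot T^v f \gnbar_k^{2^k}$. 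Now each summand equals $\nbar \condex{(f \cdot T^v f)^{[k-1]}}{\mathscr{I}_{[k-1]}} \nbar^2$ and is therefore non-negative, so one may enlarge the index set from $\Psi_N \cap \mathfrak{a}$ to all of $\Psi_N$ at the cost of the factor $|\Psi_N| / |\Psi_N \cap \mathfrak{a}|$; letting $N \to \infty$ and invoking \eqref{eqn:gowInductive} for the system $(X,\mathscr{B},\mu,T)$ then yields $\nbar \condex{f^{[k]}}{\mathscr{J}} \nbar^2 \le m \gnbar f \gnbar_{k+1}^{2^{k+1}}$. Combined with the first paragraph this establishes \eqref{eqn:leibSecondLinear} with $c = m = [\ringint_L : \mathfrak{a}]$.

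Nearly every step here is routine: the tensor identities for $(\cdot)^{[k]}$, the description of the invariant vectors of $u \mapsto T_{[k]}^{p(u)}$, and the verification that $\Psi_N \cap \mathfrak{a}$ is F\o{}lner in $\mathfrak{a}$. The one point I expect to be the crux is the comparison in the second paragraph of a F\o{}lner average over the subgroup $\mathfrak{a}$ with one over all of $\ringint_L$: this is precisely where finiteness of the index $[\ringint_L : \mathfrak{a}]$ enters (hence, ultimately, the fact that nonzero ideals in rings of integers have finite index, via Lemma~\ref{lem:groupHomIndex}), and it relies essentially on the non-negativity of the Gowers-Host-Kra quantity being averaged. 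Apart from this, the argument is the $\ringint_L$-analogue of the corresponding step in Leibman's treatment of the $\mathbb{Z}$ case.
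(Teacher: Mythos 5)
Your proposal is correct and follows essentially the same route as the paper's own proof: identify $\gnbar f \cdot T^{p(u)} f \gnbar_k^{2^k}$ with $\int f^{[k]} \cdot T_{[k]}^{p(u)} f^{[k]} \intd\mu^{[k]}$, invoke the mean ergodic theorem for the $\mathfrak{a}$-action on $\mathbf{X}^{[k]}$ to identify the limit as $\nbar\condex{f^{[k]}}{\mathscr{I}_\mathfrak{a}}\nbar^2$, and then compare a F\o{}lner average over $\mathfrak{a}$ with one over $\ringint_L$ using non-negativity of the summands and finiteness of $[\ringint_L:\mathfrak{a}]$, yielding the constant $c = [\ringint_L:\mathfrak{a}]$ exactly as in the paper. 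Your version is in fact slightly more explicit than the paper's (which simply says ``arguing as in Lemma~\ref{lem:leibFirstLinear}''), in that you carefully separate the F\o{}lner sequence $\Phi$ in $\ringint_L^d$ used for the averaged quantity from the auxiliary F\o{}lner sequence $\Psi$ in $\ringint_L$ used in the final comparison.
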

\begin{proof}
Write $p(x_1,\dots,x_d) = a_1 x_1 + \cdots + a_d x_d$ for some $a_i$ in $\ringint_L$ not all of which are zero, and let $\mathfrak{a}$ be the ideal in $\ringint_L$ generated by $\{a_1,\dots,a_d\}$.
Let $\mathscr{I}_\mathfrak{a}$ be the sub-$\sigma$-algebra of $\mathscr{B}^{[k]}$ consisting of sets that are invariant under $T_{[k]}^a$ for all $a$ in $\mathfrak{a}$.
For any F\o{}lner sequence $\Phi$ in $\ringint_L^d$ and any $f$ in $\lp^\infty(\mathbf{X})$ we have
\begin{equation*}
\begin{aligned}
&
\lim_{N \to \infty} \frac{1}{|\Phi_N|} \sum_{u \in \Phi_N} \gnbar f \cdot T^{p(u)} f \gnbar_k^{2^k}
\\
=
&
\lim_{N \to \infty} \frac{1}{|\Phi_N|} \sum_{u \in \Phi_N} \int f^{[k]} \cdot T_{[k]}^{p(u)} f^{[k]} \intd\mu_{[k]}
\\
=
&
\int \condex{f^{[k]}}{\mathscr{I}_\mathfrak{a}}^2 \intd\mu_{[k]}
\\
\le
&
\lim_{N \to \infty} \frac{[\ringint_L:\mathfrak{a}]}{|\Phi_N|} \sum_{u \in \Phi_N} \gnbar f \cdot T^u f \gnbar_k^{2^k}
=
[\ringint_L:\mathfrak{a}] \gnbar f \gnbar_{k+1}^{2^{k+1}}
\end{aligned}
\end{equation*}
by arguing as in Lemma~\ref{lem:leibFirstLinear}.
\end{proof}

The next step is to obtain a version of Lemma~\ref{lem:leibSecondLinear} for multiple recurrence.

\begin{theorem}
\label{thm:leibMultiLinear}
Let $p_1,\dots,p_k \in \ringint_L[x_1,\dots,x_d]$ be non-constant, essentially distinct linear polynomials with zero constant term.
There is a constant $c \ge 0$ such that
\begin{equation*}
\limsup_{N \to \infty} \bnbar \frac{1}{|\Phi_N|} \sum_{u \in \Phi_N} T^{p_1(u)} f_1 \cdots T^{p_k(u)} f_k \bnbar \le c \gnbar f_1 \gnbar_{k+1} \nbar f_2 \nbar_\infty \cdots \nbar f_k \nbar_\infty
\end{equation*}
for any $f_1,\dots,f_k$ in $\lp^\infty(\mathbf{X})$ and any F\o{}lner sequence $\Phi$ in $\ringint_L$.
\end{theorem}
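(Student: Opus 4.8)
The plan is to argue by induction on $k$, using a van der Corput step followed by descent, exactly as in Leibman's proof \cite{MR2151605}. What makes the present, linear, situation much simpler than the general polynomial case is that for linear $p_i$ with zero constant term one has $p_i(u+h) - p_i(u) = p_i(h)$, which is constant in $u$: no drop in degree needs to be tracked and no PET weight vector is needed, so the induction runs purely on the number $k$ of polynomials.

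The base case $k = 1$ is exactly Lemma~\ref{lem:leibFirstLinear}. For the inductive step set $g(u) = T^{p_1(u)}f_1 \cdots T^{p_k(u)}f_k$, a bounded map $\ringint_L^d \to \lp^2(\mathbf{X})$; Proposition~\ref{prop:hilbertVdc} bounds $\limsup_{N \to \infty} \bnbar\frac{1}{|\Phi_N|}\sum_{u\in\Phi_N}g(u)\bnbar^2$ by $\frac{1}{|\Phi_H|}\sum_{h\in\Phi_H}\limsup_{N\to\infty}\frac{1}{|\Phi_N|}\sum_{u\in\Phi_N}\langle g(u+h),g(u)\rangle$ for every $H \in \mathbb{N}$. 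Applying $T^{-p_k(u)}$ under the integral, which is legitimate because $T$ is measure-preserving, and using $p_i(u+h) = p_i(u) + p_i(h)$, I would rewrite
\[
\langle g(u+h),g(u)\rangle = \int F_0(h)\cdot\prod_{i=1}^{k-1}T^{(p_i-p_k)(u)}F_i(h)\intd\mu,
\]
where $F_0(h) = f_k\cdot\overline{T^{p_k(h)}f_k}$ sits at the zero polynomial and $F_i(h) = f_i\cdot\overline{T^{p_i(h)}f_i}$ sits at $p_i - p_k$ for $1 \le i \le k-1$; the two occurrences of $f_i$, one from $g(u+h)$ and one from $g(u)$, land on the same polynomial $p_i - p_k$, which is what lets them be folded into a single function.

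The polynomials $p_1 - p_k,\dots,p_{k-1} - p_k$ are again non-constant, essentially distinct, linear, and have zero constant term, so the inductive hypothesis applies to the $(k-1)$-fold average $\frac{1}{|\Phi_N|}\sum_u\prod_{i=1}^{k-1}T^{(p_i-p_k)(u)}F_i(h)$, with the seminorm placed on $F_1(h)$. Pulling the passive factor $F_0(h)$ out by Cauchy--Schwarz and bounding $\nbar F_0(h)\nbar_\infty \le \nbar f_k\nbar_\infty^2$ and $\nbar F_i(h)\nbar_\infty \le \nbar f_i\nbar_\infty^2$ gives, for each $h$,
\[
\limsup_{N \to \infty}\Bigl|\frac{1}{|\Phi_N|}\sum_u\langle g(u+h),g(u)\rangle\Bigr| \le c_{k-1}\,\gnbar f_1\cdot\overline{T^{p_1(h)}f_1}\gnbar_k\prod_{i=2}^k\nbar f_i\nbar_\infty^2.
\]
Averaging over $h \in \Phi_H$, using Jensen's inequality to move the average inside the power $2^k$, and then invoking Lemma~\ref{lem:leibSecondLinear} for the polynomial $p_1$, one bounds $\frac{1}{|\Phi_H|}\sum_h\gnbar f_1\cdot\overline{T^{p_1(h)}f_1}\gnbar_k$ in the limit $H \to \infty$ by a constant multiple of $\gnbar f_1\gnbar_{k+1}^2$. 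Substituting this back into the van der Corput estimate (which holds for every $H$, while its left side does not involve $H$) and extracting a square root yields the claim; the constant $c$ is assembled from those of Lemmas~\ref{lem:leibFirstLinear} and \ref{lem:leibSecondLinear}, i.e., from the indices $[\ringint_L:\mathfrak{a}]$ of ideals generated by coefficients of iterated differences of the $p_i$, and in particular depends only on $p_1,\dots,p_k$ and not on $\Phi$.

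The part needing the most care is the bookkeeping in the differencing: one must check that, after applying $T^{-p_k(u)}$, $F_0(h)$ genuinely sits at the zero polynomial while the $F_i(h)$ sit at the pairwise distinct non-constant polynomials $p_i - p_k$, so that the reduced family still satisfies the hypotheses of the theorem and the seminorm can be routed back onto $f_1$ at the end. It is also worth noting that it suffices to treat real-valued $f_i$, in which case Lemma~\ref{lem:leibSecondLinear}, stated without complex conjugates, applies verbatim; otherwise one carries the conjugates through using $\gnbar\overline{f}\gnbar_k = \gnbar f\gnbar_k$.
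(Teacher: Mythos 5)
Your proposal is correct and follows essentially the same route as the paper: van der Corput on $g(u)=T^{p_1(u)}f_1\cdots T^{p_k(u)}f_k$, factoring out $T^{p_k(u)}$ to descend to the $(k-1)$ essentially distinct linear polynomials $p_i-p_k$, applying the inductive hypothesis to the inner Ces\`aro average, and then feeding the resulting $h$-average of $\gnbar f_1\cdot T^{p_1(h)}f_1\gnbar_k$ through Lemma~\ref{lem:leibSecondLinear} (via the power-mean/Cauchy--Schwarz step). The only differences from the paper's proof are cosmetic bookkeeping: the explicit tracking of complex conjugates and the note that the reduced family still satisfies the hypotheses, both of which the paper leaves implicit.
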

\begin{proof}
The proof is by induction of $k$.
When $k = 1$ this is just Lemma~\ref{lem:leibFirstLinear}.
Put $g(u) = T^{p_1(u)} f_1 \cdots T^{p_k(u)} f_k$ for each $u$ in $\ringint_L^d$ and note that in $\lp^2(\mathbf{X})$ we have
\begin{equation*}
\begin{aligned}
\langle g(u+h),g(u) \rangle
=
&
\int \prod_{i=1}^k T^{p_i(u)} ( f_i \cdot T^{p_i(h)} f_i ) \intd\mu
\\
=
&
\int f_k \cdot T^{p_k(h)} f_k \prod_{i=1}^{k-1} T^{p_i(u) - p_k(u)} (f_i \cdot T^{p_i(h)} f_i) \intd\mu
\end{aligned}
\end{equation*}
so for any $H$ in $\mathbb{N}$ we have
\begin{equation*}
\begin{aligned}
&
\limsup_{N \to \infty} \bnbar \frac{1}{|\Phi_N|} \sum_{u \in \Phi_N} \prod_{i=1}^k T^{p_i(u)} f_i \bnbar^2
\\
\le
&
\frac{1}{|\Phi_H|} \sum_{h \in H} \nbar f_k \nbar_\infty^2 \, \limsup_{N \to \infty} \bnbar \frac{1}{|\Phi_N|} \sum_{u \in \Phi_N} \prod_{i=1}^{k-1} T^{p_i(u) - p_k(u)} (f_i \cdot T^{p_i(h)} f_i) \bnbar
\\
\le
&
\frac{1}{|\Phi_H|} \sum_{h \in H} C \gnbar f_1 \cdot T^{p_1(h)} f_1 \gnbar_k \nbar f_2 \nbar_\infty^2 \cdots \nbar f_k \nbar_\infty^2
\end{aligned}
\end{equation*}
by the van der Corput inequality and induction.
Applying Cauchy-Schwarz a number of times and then Lemma~\ref{lem:leibSecondLinear} gives the desired result.
\end{proof}

Using a PET induction argument exactly as in \cite{MR2151605}, one can use Theorem~\ref{thm:leibMultiLinear} to obtain the following result, which gives characteristic factors for Ces\`{a}ro averages.

\begin{theorem}
\label{thm:leibPoly}
For any finite collection of non-constant, essentially distinct polynomials $p_1,\dots,p_k$ in $\ringint_L[x_1,\dots,x_d]$ there is $r$ in $\mathbb{N}$ such that for any F\o{}lner sequence $\Phi$ in $\ringint_L$, any action $T$ of $\ringint_L$ on a compact metric probability space $(X,\mathscr{B},\mu)$ and any $f$ in $\lp^\infty(X,\mathscr{B},\mu)$ we have
\begin{equation*}
\clim_{u \to \Phi} \int f \cdot T^{p_1(u)} f \cdots T^{p_k(u)} f - \condex{f}{\mathscr{Z}_r} \cdot T^{p_1(u)}\condex{f}{\mathscr{Z}_r} \cdots T^{p_k(u)} \condex{f}{\mathscr{Z}_r} = 0
\end{equation*}
whenever $\gnbar f \gnbar_r = 0$.
\end{theorem}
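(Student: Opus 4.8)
The plan is to deduce Theorem~\ref{thm:leibPoly} from the quantitative assertion that there is $r \in \mathbb{N}$, depending only on $p_1,\dots,p_k$, and a constant $C$ such that
\begin{equation*}
\limsup_{N \to \infty} \bnbar \frac{1}{|\Phi_N|} \sum_{u \in \Phi_N} T^{p_1(u)} f_1 \cdots T^{p_k(u)} f_k \bnbar \le C \gnbar f_1 \gnbar_r \nbar f_2 \nbar_\infty \cdots \nbar f_k \nbar_\infty
\end{equation*}
for every F\o{}lner sequence $\Phi$ in $\ringint_L^d$, every action $T$ of the additive group of $\ringint_L$ on a compact metric probability space $(X,\mathscr{B},\mu)$ and all $f_1,\dots,f_k$ in $\lp^\infty(X,\mathscr{B},\mu)$. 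Granting this, Theorem~\ref{thm:leibPoly} follows: if $\gnbar f \gnbar_r = 0$ then $\condex{f}{\mathscr{Z}_r} = 0$ by Theorem~\ref{thm:GriesmerFactors}, so the second product in the statement vanishes identically, while Cauchy--Schwarz gives
\begin{equation*}
\bnbar \frac{1}{|\Phi_N|} \sum_{u \in \Phi_N} \int f \cdot T^{p_1(u)} f \cdots T^{p_k(u)} f \intd\mu \bnbar \le \nbar f \nbar_2 \, \bnbar \frac{1}{|\Phi_N|} \sum_{u \in \Phi_N} T^{p_1(u)} f \cdots T^{p_k(u)} f \bnbar,
\end{equation*}
which tends to zero by the displayed inequality with $f_1 = \cdots = f_k = f$. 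Hence the $\clim$ in Theorem~\ref{thm:leibPoly} exists and equals zero. It remains to prove the inequality, which I would do exactly along the lines of Leibman's argument in \cite{MR2151605}, by induction on the PET weight of the system $\{0,p_1,\dots,p_k\}$.

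For the base case, when each $p_i$ has degree $1$, write $p_i(u) = p_i(0) + \ell_i(u)$ with $\ell_i$ a linear form of zero constant term. Since the additive group of $\ringint_L$ is abelian we have $T^{p_i(u)} f_i = T^{\ell_i(u)} g_i$ with $g_i := T^{p_i(0)} f_i$, the forms $\ell_1,\dots,\ell_k$ are non-constant and essentially distinct with zero constant term, and $\gnbar g_1 \gnbar_{k+1} = \gnbar f_1 \gnbar_{k+1}$ because the Gowers--Host--Kra seminorms are $T$-invariant. Applying Theorem~\ref{thm:leibMultiLinear} to the $g_i$ therefore gives the inequality with $r = k+1$.

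For the inductive step, put $g(u) = T^{p_1(u)} f_1 \cdots T^{p_k(u)} f_k$ and apply the van der Corput inequality (Proposition~\ref{prop:hilbertVdc}) to obtain, for every $H \in \mathbb{N}$,
\begin{equation*}
\limsup_{N \to \infty} \bnbar \frac{1}{|\Phi_N|} \sum_{u \in \Phi_N} g(u) \bnbar^2 \le \frac{1}{|\Phi_H|} \sum_{h \in \Phi_H} \limsup_{N \to \infty} \frac{1}{|\Phi_N|} \sum_{u \in \Phi_N} \langle g(u+h), g(u) \rangle.
\end{equation*}
Expanding $\langle g(u+h),g(u)\rangle$, applying the measure-preserving transformation $T^{-p_k(u)}$ and collecting terms rewrites it as the integral of a ($T$-translate of a) fixed function against $\prod_\ell T^{q_\ell(u,h)} h_\ell$, where for each $h$ outside a set of density zero the polynomials $q_\ell(\cdot,h)$ in $u$ form a system $\mathcal{Q}_h$ of non-constant, essentially distinct polynomials of strictly smaller PET weight than $\{0,p_1,\dots,p_k\}$, and the functions $h_\ell$ are among $f_1,\dots,f_k$, their complex conjugates, and their $T$-translates (conjugation and translation leave the seminorms unchanged). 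Arranging, via the usual PET bookkeeping, that a $T$-translate of $f_1$ is the privileged function of $\mathcal{Q}_h$, Cauchy--Schwarz in $u$ followed by the inductive hypothesis applied to $\mathcal{Q}_h$ bounds the inner $\limsup$ by $C_h \gnbar f_1 \gnbar_{r'} \nbar f_2 \nbar_\infty \cdots \nbar f_k \nbar_\infty$ with $r'$ and $C_h$ uniformly bounded over the relevant $h$, since $\mathcal{Q}_h$ has the same PET weight for all such $h$. Averaging over $h \in \Phi_H$, letting $H \to \infty$ (the density-zero exceptional set contributing nothing), taking square roots, and unwinding the remaining seminorm factors by iterated Cauchy--Schwarz together with Lemma~\ref{lem:leibSecondLinear}, exactly as in \cite{MR2151605}, yields the inequality with some $r$ depending only on $p_1,\dots,p_k$.

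The step I expect to require the most care is the purely combinatorial claim that the difference system $\mathcal{Q}_h$ has strictly smaller PET weight for $h$ outside a density-zero set; but this is identical to the corresponding step in \cite{MR2151605} and \cite{MR1325795} and is insensitive both to whether the coefficient ring is $\mathbb{Z}$ or $\ringint_L$ and to the number $d$ of indeterminates, so no new idea is needed. The only point at which the number field structure enters is the linear base case, where the fact that a non-zero linear form over $\ringint_L$ has finite-index image (Lemma~\ref{lem:groupHomIndex}) is already built into Theorem~\ref{thm:leibMultiLinear}. The remaining bookkeeping is to verify that the resulting exponent $r$ and constants can be chosen uniformly in $\Phi$, in $(X,\mathscr{B},\mu,T)$, and in the $f_i$; this is automatic since the induction terminates after a number of steps bounded in terms of $p_1,\dots,p_k$ alone, and every estimate used (van der Corput, Cauchy--Schwarz, Lemma~\ref{lem:leibSecondLinear}, Theorem~\ref{thm:leibMultiLinear}) is uniform in these data.
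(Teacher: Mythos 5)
Your proposal is correct and follows essentially the same route as the paper, which itself only says that Theorem~\ref{thm:leibPoly} follows ``by a PET induction argument exactly as in \cite{MR2151605}'' with Theorem~\ref{thm:leibMultiLinear} as the linear base case and Lemma~\ref{lem:leibSecondLinear} closing the van der Corput step; your sketch supplies exactly that induction, including the reduction of the theorem's statement to the quantitative seminorm bound. No discrepancy with the paper's argument.
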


The next step is to obtain a version of Theorem~\ref{thm:leibPoly} for $\dlim$ convergence.
To do so we use product systems as in \cite{MR2138068}.
Let $p_1,\dots,p_k$ be non-constant, essentially distinct polynomials in $\ringint_L[x_1,\dots,x_d]$ and let $r \ge 1$ be as in Theorem~\ref{thm:leibPoly}.
Fix an ergodic action $T$ of $\ringint_L$ on a compact metric probability space $(X,\mathscr{B},\mu)$ and let $\mu_s$ be the ergodic decomposition of $\mu \otimes \mu$.
If $f$ in $\lp^\infty(X,\mathscr{B},\mu)$ satisfies $\gnbar f \otimes f \gnbar_{s,r} = 0$ then
\begin{equation}
\label{eqn:productErgodicFactor}
\clim_{u \to \Phi} \int (f \otimes f) \cdot (T \times T)^{p_1(u)} (f \otimes f) \cdots (T \times T)^{p_k(u)} (f \otimes f) \intd\mu_s = 0
\end{equation}
for any F\o{}lner sequence $\Phi$ in $\ringint_L$.
But from Proposition~\ref{prop:gnormForProduct}, if $\gnbar f \gnbar_{r+1} = 0$ then $\gnbar f \otimes f \gnbar_{s,r} = 0$ for almost every $s$, so \eqref{eqn:productErgodicFactor} holds for almost every $s$.
Integrating over $s$ concludes the proof of Theorem~\ref{thm:numberFieldFactors}.


\section{Multiple recurrence for polynomials over rings of integers}
\label{sec:multipleRecurrence}

Let $T$ be an ergodic action of $\ringint_L$ on a compact metric probability space $(X,\mathscr{B},\mu)$.
In the previous section we showed that, by neglecting a set of zero Banach density, it suffices to study the average \eqref{eqn:numberFieldPolyAverage} when $(X,\mathscr{B},\mu)$ is an inverse limit of nilrotations.
The goal of this section is to prove Theorem~\ref{thm:mainTheorem}.
We do so by exhibiting largeness of the set of multiple recurrence times for nilrotations.

\begin{theorem}
\label{thm:nilmanReturns}
Let $L$ be an algebraic number field.
For any jointly intersective polynomials $p_1,\dots,p_k$ in $\ringint_L[x_1,\dots,x_d]$ and any ergodic action $T$ of $\ringint_L$ on a nilmanifold $(G/\Gamma,\haar)$ determined by a homomorphism $a : \ringint_L \to G$, there is $c > 0$ for which the set
\begin{equation}
\label{eqn:nilmanReturns}
\left\{ u \in \ringint_L^d : \int 1_B \cdot T^{p_1(u)} 1_B \cdots T^{p_k(u)} 1_B \intd\haar \ge c \right\}
\end{equation}
is $\aip^*_+$ for every $B \subset G/\Gamma$ with $\haar(B)  > 0$.
\end{theorem}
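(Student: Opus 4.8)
The plan is to reduce the statement to the existence of a single good return time for the nilrotation, and then to bootstrap from it to $\aip^*_+$ largeness via idempotent ultrafilters. First I would fix a $\mathbb{Z}$-basis $e_1,\dots,e_m$ of $\ringint_L$, identifying $\ringint_L^d$ with $\mathbb{Z}^{dm}$; since multiplication in $\ringint_L$ is $\mathbb{Z}$-bilinear, each $p_i$ becomes a polynomial map $\mathbb{Z}^{dm}\to\mathbb{Z}^m$, and because $a$ is an additive homomorphism the maps $g_i(u):=a(p_i(u))$ are polynomial sequences in $G$ in the sense of Leibman. Writing $T^{p_i(u)}1_B(x)=1_B(g_i(u)x)$, the quantity in \eqref{eqn:nilmanReturns} is
\[
\phi(u) := \int 1_B(x) \prod_{i=1}^{k} 1_B(g_i(u)x)\,\intd\haar(x) = \haar\big( B \cap g_1(u)^{-1}B \cap \cdots \cap g_k(u)^{-1}B\big),
\]
so the theorem asks that $\{u : \phi(u)\ge c\}$ be $\aip^*_+$ for a suitable $c>0$.

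Second, I would show that it is enough to exhibit one $w\in\ringint_L^d$ with $\phi(w)>0$. Granting this, put $c=\phi(w)/2$ and choose a continuous $F : G/\Gamma\to[0,1]$ with $\|1_B-F\|_{\lp^1(\haar)}$ small enough that $|\phi(v)-\phi_F(v)|<\phi(w)/8$ for every $v$, where $\phi_F(v)=\int F(x)\prod_{i=1}^{k} F(g_i(v)x)\,\intd\haar(x)$; such an $F$ exists by regularity of $\haar$ on the compact metric space $G/\Gamma$, and the uniform estimate is a telescoping argument using that the translations are measure-preserving. For each $i$ write $p_i(u+w)=p_i(w)+q_i(u)$ with $q_i(0)=0$, so $g_i(u+w)x=g_i(w)\,a(q_i(u))x$. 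Polynomial orbits of translations on a nilmanifold are distal — this follows from Lemma~\ref{lem:distalIp} after suspending the $q_i$ to a genuine $\ringint_L^d$-nilrotation, or by a direct induction on degree in the spirit of Lemma~\ref{lem:ultraPolyLimit} — so $\lim_{u\to\ultra{p}} a(q_i(u))x=x$ for every $x$ and every idempotent ultrafilter $\ultra{p}$ on $\ringint_L^d$. Bounded convergence then gives $\lim_{u\to\ultra{p}}\phi_F(u+w)=\phi_F(w)$, hence $\lim_{u\to\ultra{p}}\phi(u+w)\ge\phi_F(w)-\phi(w)/8\ge\phi(w)-\phi(w)/4>c$ for every idempotent $\ultra{p}$. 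Thus $\{u:\phi(u+w)\ge c\}$ belongs to every idempotent ultrafilter on $\ringint_L^d$, so it is $\ip^*$, and its translate $\{v:\phi(v)\ge c\}$ is $\ip^*_+$, in particular $\aip^*_+$.

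It then remains to find $w$ with $\phi(w)>0$, and this is where joint intersectivity is used. After the basis reduction the task is the multiparameter analogue of the main result of \cite{MR2435427}: multiple recurrence of a $\mathbb{Z}^m$-nilrotation along polynomial mappings $\mathbb{Z}^{dm}\to\mathbb{Z}^m$ that are jointly intersective with respect to the finite-index subgroups of $\mathbb{Z}^m$ — which is precisely what $\ringint_L$-joint-intersectivity of $p_1,\dots,p_k$ becomes, since the finite-index subgroups of $\ringint_L$ are exactly the finite-index subgroups of its underlying abelian group. I would follow the argument of \cite{MR2435427}: Leibman's polynomial equidistribution on nilmanifolds \cite{MR2151605}, applied to the orbit $u\mapsto(x,g_1(u)x,\dots,g_k(u)x)$ in $(G/\Gamma)^{k+1}$ and averaged over $x$, exhibits an orbit-closure sub-nilmanifold $W\subseteq(G/\Gamma)^{k+1}$ controlling the Ces\`{a}ro averages along boxes of $\phi$; joint intersectivity — used through the fact that $\{\zeta : p_i(\zeta)\in\Lambda \text{ for all } i\}\ne\emptyset$ for every finite-index subgroup $\Lambda$ of $\ringint_L$ — forces $W$ to contain the diagonal and to fibre over it nondegenerately, so that $\haar_W(B\times\cdots\times B)>0$ and hence $\phi$ is bounded below by a positive constant on a set of positive density. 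In particular $\phi(w)>0$ for at least one $w$ (indeed, by Lemma~\ref{lem:positiveFolnerConstantSyndetic}, for a syndetic set of $w$). Leibman's tools being already multidimensional, passing from $\mathbb{Z}$ to $\mathbb{Z}^m$ and from one variable to $\mathbb{Z}^{dm}$ variables changes nothing essential in this argument.

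The hard part is this last step: transporting the nilrotation analysis of \cite{MR2435427} to the present multiparameter setting and checking that joint intersectivity over $\ringint_L$ plays exactly the role it does over $\mathbb{Z}$. Two smaller points also need care along the way: the distality of polynomial orbits of translations on a nilmanifold along idempotent ultrafilters, used in the bootstrap, and the observation that only convergence of the Ces\`{a}ro averages of $\phi$ along boxes is needed, so that no uniformity over F\o{}lner sequences is required for producing a single $w$. Finally, I would note that the argument sketched above in fact yields the stronger conclusion that \eqref{eqn:nilmanReturns} is $\ip^*_+$; the weaker $\aip^*_+$ is the form that is preserved under the characteristic-factor reduction used to deduce Theorem~\ref{thm:mainTheorem} from the nilrotation case, which is presumably why it is stated that way.
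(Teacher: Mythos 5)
Your overall architecture — reduce to $\mathbb{Z}^{dm}$ via a $\mathbb{Z}$-basis of $\ringint_L$, use Leibman's equidistribution on nilmanifolds together with the arguments of \cite{MR2435427} to produce a good return time $w$ on a syndetic set, then bootstrap to ultrafilter largeness through distality of nilrotations — matches the paper's. But there is a genuine gap in the bootstrap step, and it is precisely the gap whose repair forces the conclusion down from $\ip^*_+$ to $\aip^*_+$.

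You write that $\lim_{u\to\ultra{p}} a(q_i(u))x = x$ pointwise in $x$ for each polynomial sequence $q_i$ with $q_i(0)=0$ (correct, via suspension or via the multidimensional version of Lemma~\ref{lem:ultraPolyLimit}), and then assert ``bounded convergence then gives $\lim_{u\to\ultra{p}}\phi_F(u+w)=\phi_F(w)$.'' This interchange of ultrafilter limit and Haar integral is not justified, and in general it fails. There is no dominated convergence theorem for limits along nonprincipal ultrafilters: a pointwise ultrafilter limit of a bounded sequence of measurable functions need not be measurable, and even when the claimed limit function is measurable (as here), the integrals need not converge to its integral without uniformity in $x$. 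And the convergence $a(q_i(u))x\to x$ is \emph{not} uniform in $x$ for polynomial orbits on a non-abelian nilmanifold: already on the Heisenberg nilmanifold, a translation element $h_u$ whose Mal'cev coordinates return near $\Gamma$ with large integer part produces a displacement of $g\Gamma$ that depends on $g$ through a quantity of the form $\|k_u\cdot(\text{second coordinate of }g)\|$, so $\sup_x d(h_ux,x)$ does not tend to zero even though each $d(h_ux,x)$ does. The paper sidesteps this entirely: rather than trying to pass the ultrafilter limit through the integral, it applies \cite[Theorem~4.3]{ETS:8947387} to the \emph{integrated} correlation sequence $\varphi(u)$ directly, obtaining $\varphi=\phi+\psi$ with $\phi$ a genuine nilsequence $u\mapsto h(b(u)x_0)$ for a fixed point $x_0$ and a \emph{linear} homomorphism $b$, plus an error $\psi$ tending to zero in density. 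Distality is then applied only to the single linear orbit $u\mapsto b(u)x_0$, where Lemma~\ref{lem:distalIp} gives an honest $\ip^*$ statement with no integral in the way; the density-zero error $\psi$ is what downgrades the final answer to $\aip^*_+$.

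Your closing remark that the argument ``in fact yields the stronger conclusion $\ip^*_+$'' should have been a warning. If the direct distality-and-interchange route went through, the paper would have stated $\ip^*_+$ in Theorem~\ref{thm:nilmanReturns}; it does not, because the density-zero error term is intrinsic to the nilsequence decomposition of polynomial correlation sequences (even with continuous $F$), and that error is exactly where the unjustified interchange of limits would need to happen. The syndeticity step and the use of joint intersectivity through \cite{MR2435427} are in order, and the observation that only one good return time $w$ is needed once the bootstrap is in hand is correct, but as written the bootstrap is not valid.
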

\begin{proof}
Let $e_1,\dots,e_m$ be a basis for $\ringint_L$ thought of as a $\mathbb{Z}$-module.
Using this basis we can identify $\ringint_L^d$ with $\mathbb{Z}^{dm}$.
For each $1 \le i \le k$ define polynomials $p_{i,1},\dots,p_{i,m} : \mathbb{Z}^{md} \to \mathbb{Z}$ by
\begin{equation*}
p_i(u) = p_{i,1}(u) e_1 + \cdots + p_{i,m}(u) e_m
\end{equation*}
for each $u$ in $\ringint_L^d$.

We claim that the polynomials $\{ p_{i,j} : 1 \le i \le k, 1 \le j \le m \}$ are jointly intersective.
Indeed, fix $\xi$ in $\mathbb{Z} \setminus \{0\}$ and let $\Lambda$ be the ideal in $\ringint_L$ generated by $\lambda e_1 + \cdots + \lambda e_m$.
There is $\zeta$ in $\ringint_L^d$ such that $\{ p_1(\zeta),\dots,p_k(\zeta) \} \subset \Lambda$.
This means that, for each $i$, we can find $t_1,\dots,t_m$ in $\ringint_L$ such that
\begin{equation*}
p_{i,1}(\lambda) e_1 + \cdots + p_{i,m}(\lambda) e_m = (t_1 e_1 + \cdots + t_m e_m)(\lambda e_1 + \cdots + \lambda e_m)
\end{equation*}
from which it follows that $\lambda \divides p_{i,j}(\zeta)$.

Next, we show that \eqref{eqn:nilmanReturns} is syndetic following \cite{MR2435427}.
Fix a nilpotent Lie group $G$ and a closed, cocompact subgroup $\Gamma$.
Let $\haar$ be the $G$-invariant probability measure on the quotient $X := G/\Gamma$.
Fix $B \subset X$ with $\haar(B) > 0$.
Let $a : \ringint_L \to G$ be a group homomorphism and let $T$ be the induced action of $\ringint_L$ on $G/\Gamma$.
Put $a_i = a(e_i)$.
Then
\begin{equation*}
a(p_i(u)) = a(p_{i,1}(u) e_1 + \cdots + p_{i,m}(u) e_m) = a_1^{p_{i,1}(u)} \cdots a_m^{p_{i,m}(u)}
\end{equation*}
for each $1 \le i \le k$ and every $u$ in $\mathbb{Z}^{dm}$.
Define a polynomial sequence $g : \mathbb{Z}^{dm} \to G^{k+1}$ by
\begin{equation*}
g(u) = (1, a_1^{p_{1,1}(u)} \cdots a_m^{p_{1,m}(u)},\dots, a_1^{p_{k,1}(u)} \cdots a_m^{p_{k,m}(u)})
\end{equation*}
for all $u$ in $\mathbb{Z}^{dm}$.
Let $\diagonal$ be the diagonal in $X^{k+1}$ and let $\haar_\diagonal$ be the push-forward of $\haar$ under the embedding of $X$ in $\diagonal$.
By \cite{MR2122920}, the closure
\begin{equation*}
Y = \overline{\bigcup \{ g(u) \diagonal : u \in \ringint_L^d \}}
\end{equation*}
is a finite union of sub-nilmanifolds of $X^{k+1}$ and the sequence $u \mapsto g(u) \haar_\diagonal$ has an asymptotic distribution $\mu$ in its orbit closure that is a convex combination of the Haar measures on the connected components of $Y$.
Thus we have
\begin{equation*}
\begin{aligned}
&
\clim_{u \to \Phi} \int f_0 \cdot T^{p_1(u)} f_1 \cdots T^{p_k(u)} f_k \intd\haar
\\
=
&
\clim_{u \to \Phi} \int f_0 \otimes T^{p_1(u)} f_1 \otimes \cdots \otimes T^{p_k(u)} f_k \intd\haar_\diagonal
\\
=
&
\clim_{u \to \Phi} \int f_0 \otimes f_1 \otimes \cdots \otimes f_k \intd\haar_{g(u)\diagonal}
\\
=
&
\int f_0 \otimes f_1 \otimes \cdots \otimes f_k \intd \mu
\end{aligned}
\end{equation*}
for any continuous functions $f_0,f_1,\dots,f_k : X \to \mathbb{R}$ and any F\o{}lner sequence $\Phi$ in $\mathbb{Z}^{dm}$.
A density argument proves that the same is true for any $f_0,f_1,\dots,f_k$ in $\lp^\infty(X)$.
Thus for any $B$ in $\mathscr{B}$ we have
\begin{equation*}
\clim_{u \to \Phi} \haar(B \cap T^{-p_1(u)} B \cap \cdots \cap T^{-p_k(u)} B) = \mu(B^{k+1})
\end{equation*}
for every F\o{}lner sequence $\Phi$ in $\mathbb{Z}^{dm}$.
Following the argument on Page~376 of \cite{MR2435427} and applying \cite[Proposition~2.4]{MR2435427} yields
\begin{equation*}
\clim_{u \to \Phi} \int 1_B \cdot T^{p_1(u)} 1_B \cdots T^{p_k(u)} 1_B \intd\haar > 0
\end{equation*}
for every F\o{}lner sequence $\Phi$ in $\mathbb{Z}^{dm}$.
By Lemma~\ref{lem:positiveFolnerConstant} there is some $c > 0$ such that
\begin{equation*}
\clim_{u \to \Phi} \int 1_B \cdot T^{p_1(u)} 1_B \cdots T^{p_k(u)} 1_B \intd\haar \ge c
\end{equation*}
for every $\Phi$.
Thus
\begin{equation}
\label{eqn:nilmanReturnsConstant}
\left\{ u \in \ringint_L^d : \int 1_B \cdot T^{p_1(u)} 1_B \cdots T^{p_k(u)} 1_B \intd\haar \ge \frac{c}{2} \right\}
\end{equation}
has positive density with respect to every F\o{}lner sequence and is therefore syndetic by Lemma~\ref{lem:positiveFolnerConstantSyndetic}.

It remains to prove \eqref{eqn:nilmanReturnsConstant} is $\aip^*_+$.
Fix a continuous function $f : X \to [0,1]$ with $\nbar 1_B - f \nbar_1 < c/8(k+1)$.
Define $\varphi : \ringint_L^d \to \mathbb{R}$ by
\begin{equation*}
\varphi(u) = \int f \cdot T^{p_1(u)} f \cdots T^{p_k(u)} f \intd\haar
\end{equation*}
for every $u \in \ringint_L^d$.
By \cite[Theorem~4.3]{ETS:8947387} we can write $\varphi$ as a sum of sequences $\phi + \psi$ where $\phi$ is a nilsequence and
\[
\dlim_{u \to \Phi} \psi(u) = 0
\]
for every F\o{}lner sequence.
Thus there is a nilmanifold $\tilde{X} = \tilde{G}/\tilde{\Gamma}$, a homomorphism $b : \ringint_L^d \to \tilde{G}$, a continuous function $h : \tilde{X} \to \mathbb{R}$ and some $x \in \tilde{X}$ such that $\phi(u) = h(b(u) x)$ for all $u \in \ringint_L^d$.
Combining the above, we obtain
\begin{equation*}
\left| \int 1_B \cdot T^{p_1(u)} 1_B \cdots T^{p_k(u)} 1_B \intd\haar - h(b(u)x) \right| \le \frac{c}{8} + |\psi(u)|
\end{equation*}
for every $u \in \ringint_L^d$.
The set $\{ u \in \ringint_L^d : |\psi(u)| > c/8 \}$ has zero upper Banach density so syndeticity of \eqref{eqn:nilmanReturnsConstant} and Lemma~\ref{lem:allFolnerSyndetic} imply that $h(b(w)x) \ge c/8$ for some $w \in \ringint_L^d$.
The nilrotation $b$ determines is distal by \cite[Theorem~2.2]{MR0219050}, so
\begin{equation}
\label{eqn:ipDistalLimit}
\lim_{v \to \ultra{p}} h(b(v + w)x) = h(b(w)x)
\end{equation}
for every idempotent ultrafilter $\ultra{p}$ in $\beta \ringint_L^d$ by Lemma~\ref{lem:distalIp}.
It follows that
\begin{equation*}
\{ u \in \ringint_L^d : h(b(u)x) \ge c/8 \}
\end{equation*}
is $\ip^*_+$.
Finally, \eqref{eqn:nilmanReturnsConstant} is $\aip^*_+$ as desired.
\end{proof}

In order to deduce Theorem~\ref{thm:mainTheorem} from Theorem~\ref{thm:nilmanReturns} we need the following preliminary result, based on \cite[Proposition~7.1]{MR670131}.

\begin{proposition}
\label{prop:fkoLift}
Fix a countable, commutative ring $R$ and polynomials $p_1,\dots,p_l$ in $R[x_1,\dots,x_d]$.
Let $(X,\mathscr{B},\mu)$ be a compact metric probability space and let $T$ be an action of the additive group of $R$ on $(X,\mathscr{B},\mu)$ by measurable, measure-preserving maps.
Fix $B \in \mathscr{B}$ with $\mu(B) > 0$.
For any countably generated $T$-invariant sub-$\sigma$-algebra $\mathscr{D} \subset \mathscr{B}$ and any $D \in \mathscr{D}$ with $\mu(B \symdiff D) < \mu(B)/8l$ we can find $E \in \mathscr{D}$ with $\mu(E) > 0$ such that
\begin{equation}
\label{eqn:fkoLift}
\int T^{p_1(u)} 1_B \cdots T^{p_l(u)} 1_B \intd\mu \ge \frac{1}{2} \int T^{p_1(u)} 1_E \cdots T^{p_l(u)} 1_E \intd\mu
\end{equation}
for every $u \in R$.
\end{proposition}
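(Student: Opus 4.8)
The plan is to compare the correlation sequence for $B$ with the one for a suitable super-level set of the conditional expectation $f := \condex{1_B}{\mathscr{D}}$, using the disintegration of $\mu$ over $\mathscr{D}$. Since $(X,\mathscr{B},\mu)$ is a compact metric probability space and $\mathscr{D}$ is countably generated and $T$-invariant, I would realize $\mathscr{D}$ as the pullback of a factor $\pi : (X,\mathscr{B},\mu,T) \to (Y,\mathscr{Y},\nu,S)$ and write $\mu = \int_Y \mu_y \intd\nu(y)$ for the associated disintegration. Invariance of $\mathscr{D}$ together with countability of $R$ gives, for $\nu$-almost every $y$, the equivariance $T^r_* \mu_y = \mu_{S^r y}$ for all $r \in R$ at once; equivalently $\condex{T^r h}{\mathscr{D}} = T^r \condex{h}{\mathscr{D}}$ for every $h \in \lp^\infty(X,\mathscr{B},\mu)$ and $r \in R$, so that a $\mathscr{D}$-measurable version of $T^r f$ is constant on the fibre over $y$ with value $\mu_y(T^r B)$.

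Next I would produce the set $E$. Put $\delta = \mu(B)/8l$. Since conditional expectation is an $L^1$ contraction and $1_D$ is $\mathscr{D}$-measurable, $\|f - 1_D\|_1 \le \|1_B - 1_D\|_1 = \mu(B \symdiff D) < \delta$. On $D$ one has $1 - f = |f - 1_D|$, so Markov's inequality bounds $\mu\big(D \cap \{ f < 1 - \tfrac{1}{2l} \}\big)$ by $2l\delta = \mu(B)/4$, while $\mu(D) > \mu(B) - \delta \ge \tfrac{7}{8}\mu(B)$ since $l \ge 1$. Hence $E := \{ f \ge 1 - \tfrac{1}{2l} \}$, which lies in $\mathscr{D}$ because $f$ is $\mathscr{D}$-measurable, satisfies $\mu(E) \ge \mu(D) - \tfrac14\mu(B) > 0$.

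The core of the proof is a fibrewise estimate. Fix $u$ and let $\bar G \in \mathscr{Y}$ correspond to the $\mathscr{D}$-set $\bigcap_{i=1}^l T^{p_i(u)} E$. For $y \in \bar G$ the fibre over $y$ is contained in each $T^{p_i(u)} E = \{ T^{p_i(u)} f \ge 1 - \tfrac{1}{2l} \}$, so $\mu_y(T^{p_i(u)} B) \ge 1 - \tfrac{1}{2l}$ for every $i$, and a union bound gives $\mu_y\big( \bigcap_{i=1}^l T^{p_i(u)} B \big) \ge 1 - l\cdot\tfrac{1}{2l} = \tfrac12$. Integrating this over $\bar G$,
\[
\int \prod_{i=1}^l T^{p_i(u)} 1_B \intd\mu = \int_Y \mu_y\Big( \bigcap_{i=1}^l T^{p_i(u)} B \Big) \intd\nu(y) \ge \tfrac12\,\nu(\bar G) = \tfrac12 \int \prod_{i=1}^l T^{p_i(u)} 1_E \intd\mu,
\]
which is exactly \eqref{eqn:fkoLift}.

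The one step that is not bookkeeping is choosing $E$ to be a super-level set of $\condex{1_B}{\mathscr{D}}$, rather than $D$ itself or the function $\condex{1_B}{\mathscr{D}}$: a telescoping comparison of $B$ with $D$ only yields an additive error of order $l\,\mu(B \symdiff D)$, which is worthless since $\int \prod_i T^{p_i(u)} 1_E \intd\mu$ can vanish for most $u$. Forcing every $\mathscr{D}$-conditional density that appears in the fibrewise intersection to be within $\tfrac{1}{2l}$ of $1$ is precisely what converts this into the multiplicative comparison above; everything else is the $L^1$ and Markov bookkeeping already indicated.
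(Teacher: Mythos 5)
Your proof is correct and follows essentially the same route as the paper: disintegrate $\mu$ over $\mathscr{D}$, take for $E$ a super-level set of $\condex{1_B}{\mathscr{D}}$ (the paper's $E = \{x \in D : \mu_x(B) > 1 - 1/2l\}$ differs from yours only by the harmless intersection with $D$), obtain the $\mu(E)>0$ bound from $\mu(B \symdiff D) < \mu(B)/8l$, and conclude via the union bound on fibres that $\mu_y(\bigcap_i T^{p_i(u)}B) \ge 1/2$ wherever the fibre lies in $\bigcap_i T^{p_i(u)}E$. Your rephrasing through $f = \condex{1_B}{\mathscr{D}}$ and an explicit factor $(Y,\nu,S)$ is cosmetic; the paper works directly with the disintegration $x \mapsto \mu_x$ and derives the same $\mu(D\setminus E)$ bound by a direct computation rather than via the $L^1$ contraction and Markov, but the content is identical.
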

\begin{proof}
We have $\mu(D) \ge \mu(B) - \mu(B)/8l > 0$ because $|\mu(B) - \mu(D)| \le \mu(B \symdiff D)$.
Let $x \mapsto \mu_x$ be a disintegration of $\mu$ over $\mathscr{D}$.
Put
\begin{equation*}
E = \{ x \in D : \mu_x(B) > 1 - 1/2l \}
\end{equation*}
and note that
\begin{align*}
\mu(D \setminus B)
&
=
\iint 1_D 1_{X \setminus B} \intd\mu_x \intd\mu(x)
\\
&
=
\int 1_D(x) \mu_x(X \setminus B) \intd\mu(x)
\\
&
\ge
\int 1_{D \setminus E}(x) \left( 1 - \mu_x(B) \right) \intd\mu(x)
\ge
\frac{\mu(D \setminus E)}{2l}
\end{align*}
implies $\mu(D \setminus E) < \mu(B)/4$ as otherwise $\mu(B \symdiff D) < \mu(B)/8l$ is contradicted.
Thus $\mu(E) \ge \mu(B)/2$.
Fix $u \in R$.
If $x \in T^{-p_i(u)}E$ then $\mu_x(T^{-p_1(u)}B) > 1 - 1/2l$ because $\mathscr{D}$ is $T$-invariant.
Thus if $x \in T^{-p_1(u)}E \cap \cdots \cap T^{-p_l(u)}E$ we have
\begin{equation*}
\mu_x(T^{-p_1(u)}B \cap \cdots \cap T^{-p_l(u)}B) > \frac{1}{2}
\end{equation*}
and integrating over $T^{-p_1(u)}E \cap \cdots \cap T^{-p_l(u)}E$ gives \eqref{eqn:fkoLift}.
\end{proof}

Here is the proof of Theorem~\ref{thm:mainTheorem}.

\begin{proof}[Proof of Theorem~\ref{thm:mainTheorem}]
Let $T$ be an ergodic action of $\ringint_L$ on a compact metric probability space $(X,\mathscr{B},\mu)$ and fix $B \in \mathscr{B}$ with $\mu(B) > 0$.
Let $r$ be as in Theorem~\ref{thm:numberFieldFactors}.
Put $h = \condex{1_B}{\mathscr{Z}_r}$.
We can assume that the polynomials $p_1,\dots,p_k$ in $\ringint_L[x_1,\dots,x_d]$ are distinct.
Since distinct, jointly intersective polynomials are always essentially distinct, for every $\epsilon > 0$ the set
\begin{equation*}
\left\{ u \in \ringint_L^d : \left| \int 1_B \cdot T^{p_1(u)} 1_B \cdots T^{p_k(u)} 1_B \intd\mu - \int h \cdot T^{p_1(u)} h \cdots T^{p_k(u)} h \intd\mu \right| \ge \epsilon \right\}
\end{equation*}
has zero upper Banach density by Theorem~\ref{thm:numberFieldFactors}.
Since $h$ is positive on $B$ we can find $C \in \mathscr{B}$ and $a > 0$ such that $a 1_C \le h$.

The factor corresponding to $\mathscr{Z}_r$ is an inverse limit of nilrotations by Theorem~\ref{thm:GriesmerFactors}.
Thus we can find a Borel subset $D$ of a nilrotation such that $\mu(C \symdiff D) \le \mu(C)/8(k+1)$.
Combining Proposition~\ref{prop:fkoLift} with Theorem~\ref{thm:nilmanReturns} implies there is some $c > 0$ such that
\begin{equation*}
\left\{ u \in \ringint_L^d : \int h \cdot T^{p_1(u)} h  \cdots T^{p_k(u)} h \intd\mu \ge c \right\}
\end{equation*}
is $\aip^*_+$.
Picking $\epsilon = c/2$ proves that \eqref{eqn:mainTheorem} is also $\aip^*_+$ as desired.
\end{proof}

We conclude by giving a proof of Theorem~\ref{thm:mainTheoremNonerg}.

\begin{proof}[Proof of Theorem~{\ref{thm:mainTheoremNonerg}}]
Let $T$ be an action of $\ringint_L$ on a compact metric probability space $(X,\mathscr{B},\mu)$ and fix $B \in \mathscr{B}$ with $\mu(B) > 0$.
Let $\mu_x$ be an ergodic decomposition for $\mu$.
For almost every $x$ the set
\begin{equation*}
R_x = \{ u \in \ringint_L^d : \mu_x(B \cap T^{p_1(u)} B \cap \cdots \cap T^{p_k(u)} B) > 0 \}
\end{equation*}
is $\aip^*_+$ by Theorem~\ref{thm:mainTheorem} and therefore syndetic by Lemma~\ref{lem:sumptuousSyndetic}.
Thus for every F\o{}lner sequence $\Phi$ in $\ringint_L^d$ we have
\begin{equation*}
\liminf_{N \to \infty} \frac{1}{|\Phi_N|} \sum_{u  \in \Phi_N} \mu_x(B \cap T^{p_1(u)} B \cap \cdots \cap T^{p_k(u)} B) > 0
\end{equation*}
so Fatou's Lemma implies that
\begin{equation*}
\liminf_{N \to \infty} \frac{1}{|\Phi_N|} \sum_{u \in \Phi_N} \int \mu_x(B \cap T^{p_1(u)} B \cap \cdots \cap T^{p_k(u)} B) \intd\mu > 0
\end{equation*}
and \eqref{eqn:mainTheoremNoErgodic} is syndetic by Lemma~\ref{lem:positiveFolnerConstantSyndetic}.
\end{proof}

\printbibliography

\end{document}